\newtheorem{theorem}{Theorem}[section]
\newtheorem{lemma}[theorem]{Lemma}
\newtheorem{remark}[theorem]{Remark}
\newtheorem{definition}[theorem]{Definition}
\newtheorem{corollary}[theorem]{Corollary}
\newtheorem{proposition}[theorem]{Proposition}
\newtheorem{lem-def}[theorem]{Lemma-Definition}
\DeclareRobustCommand\longtwoheadrightarrow
\newcommand{\hooklongrightarrow}{\lhook\joinrel\longrightarrow}
\newcommand\bigZero{\mbox{\Large$0$}}
\newcommand{\Z}{\mathbb Z}
\newcommand{\Q}{\mathbb Q}
\newcommand{\V}{\mathbb{V}}
\def\op{\operatorname}
\def\al{\alpha}
\def\as#1{\renewcommand\arraystretch{#1}}
\def\cc{{\mathcal C}}
\def\di{\op{diag}}
\def\diso{\lower.4ex\hbox{$\downarrow$}\raise.4ex\hbox{\mbox{\scriptsize
$\wr$}}}
\def\e{\medskip}
\def\ep#1{\exp(\Pi i#1)}
\def\ep{\epsilon}
\def\g{\Gamma}
\def\ga{\gamma}
\def\gen#1{\big\langle\, {#1} \,\big\rangle}
\def\gg{\mathcal{G}}
\def\ggm{\mathcal{G}_\mu}
\def\ggminf{\mathcal{G}_{\minf}}
\def\ggmp{\mathcal{G}_{\mu'}}
\def\ggm{\mathcal{G}_\mu}
\def\gm{\g_\mu}
\def\gmf{\g_\mu^{\operatorname{fg}}}
\def\gp{\mathfrak{p}}
\def\gq{\g_\Q}
\def\gr{\operatorname{gr}}
\def\gv{\operatorname{gr}_v(K)}
\def\hm{H_\mu}
\def\hmp{H_{\mu'}}
\def\hmz{H_{\mu_0}}
\def\imp{\,\Longrightarrow\,}
\def\im{\op{Im}}
\def\iso{\ \lower.3ex\hbox{\as{.08}$\begin{array}{c}\lra\\\mbox{\tiny $\sim\,$}\end{array}$}\ }
\def\k{\op{Ker}}
\def\ka{\kappa}
\def\km{k_\mu}
\def\kpm{\op{KP}(\mu)}
\def\kx{K[x]}
\def\kxfg{K[x]_{\mu\op{-att}}}
\def\la{\lambda}
\def\lc{\op{lc}}
\def\lg{l\raise.6ex\hbox to.2em{\hss.\hss}l}
\def\lra{\,\longrightarrow\,}
\def\m{{\mathfrak m}}
\def\minf{\mu_{-\infty}}
\def\mmu{\mid_\mu}
\def\mn{\op{Min}}
\def\mx{\op{Max}}
\def\nph{N_{\mu,\phi}}
\def\nphm{N_{\mu',\phi}}
\def\npphm{N^{\mbox{\tiny pp}}_{\mu,\phi}}
\def\oo{\mathcal{O}}
\def\orb{\hbox to  .3em{$\backslash$}\backslash}
\def\ord{\op{ord}}
\def\ppa{\mathcal{P}_{\alpha}}
\def\pset{\mathcal{P}}
\def\qgq{\mathbb{Q}\times\gq}
\def\Rh{\widehat{R}}
\def\rr{\mathcal{R}}
\def\rrm{\mathcal{R}_\mu}
\def\sii{\,\Longleftrightarrow\,}
\def\slp{\op{sl}}
\def\smu{\sim_\mu}
\def\Min{\mathrm{Min}}
\def\Gi#1{\iota_{#1,1}\,\Z \oplus\cdots\oplus \iota_{{#1},k}\,\Z}
\def\lcm{\mathrm{lcm}}
\newcounter{cs}
\newcommand{\casos}{\begin{itemize}}
\newcommand{\fcasos}{\end{itemize}\setcounter{cs}{1}}
\newfont{\tit}{cmr12 scaled \magstep3}
\title[Computation of residual polynomial operators]{Computation of residual polynomial operators of inductive valuations}
\author[Moraes de Oliveira]{Nath\'alia Moraes de Oliveira}
\address{Departament de Matem\`{a}tiques,
         Universitat Aut\`{o}noma de Barcelona,
         Edifici C, E-08193 Bellaterra, Barcelona, Catalonia, Spain}
\email{noliveira@mat.uab.cat,\quad nart@mat.uab.cat}
\author[Nart]{Enric Nart}
\thanks{Partially supported by grant MTM2016-75980-P from the Spanish MEC}
\date{}
\keywords{inductive valuation, MacLane chain, Newton polygon, residual polynomial}
\begin{document}

\begin{abstract}
Let $(K,v)$ be a valued field, and $\mu$ an inductive valuation on $\kx$ extending $v$. Let $\ggm$ be the graded algebra of $\mu$ over $\kx$, and $\kappa$ the maximal subfield of the subring of $\ggm$ formed by the homogeneous elements of degree zero.

In this paper, we find an algorithm to compute the field $\kappa$ and the residual polynomial operator $R_\mu\colon \kx\to\kappa[y]$, where $y$ is another indeterminate, without any need to perform computations in the graded algebra.

This leads to an OM algorithm to compute the factorization of separable defectless polynomials over henselian fields.
\end{abstract}

\maketitle

\section*{Introduction}

In a pioneering work along the 1920s, \O. Ore conjectured the existence of an algorithm to compute prime ideal decomposition in the number field defined by an irreducible polynomial $f\in\Q[x]$, based in the iteration of two major steps \cite{ore1,ore2}:  
\begin{itemize}
\item Computation of Newton polygons  of $\phi$-expansions of $f$, with respect to key polynomials $\phi$ for some auxiliary valuations on $\Q[x]$.
\item Factorization in the residue class fields of the above valuations, of certain residual polynomials associated to the sides of the Newton polygons.
\end{itemize}
In the 1930s, S. MacLane found an algorithm to solve a more general problem: given  a discrete rank one valuation $v$ on a field $K$, to find all its extensions to a   field extension $L=K[x]/f$ determined by an irreducible polynomial $f\in\kx$  \cite{mcla,mclb}. Any such extension may be identified to a valuation  $\mu$ on $\kx$ with support $f\kx$. MacLane's algorithm constructs a chain of augmentations of valuations on the polynomial ring $\kx$
\begin{equation}\label{chain0}
\mu_0\;<\;\mu_1\;<\;\cdots\;<\;\mu_i\;<\;\cdots\;<\;\mu
\end{equation}
which get arbitrarily close to $\mu$. In these augmentation steps, certain key polynomials for the valuations $\mu_i$  play an essential role. 
This algorithm can be reinterpreted as a polynomial factorization algorithm in $\hat{K}[x]$, where $\hat{K}$ is the completion of $K$ at $v$.

In the 40s, computers were born, and grew up in the 50s and 60s. Algorithmic arithmetic geometry appeared in the 60s and consolidated in the 70s as a mature mathematical discipline, but the ``old" ideas of Ore and MacLane were forgotten.

In the 80s, K. Okutsu stressed the role of key polynomials in the computation of integral bases of local fields \cite{Ok,okutsu}. In 1999, J. Montes developed  
certain residual polynomial operators which led to the design of a practical algorithm following the exact pattern that Ore had foreseen \cite{montes,bordeaux,HN}.
This algorithm is known as the \emph{OM algorithm}, named after Ore, MacLane, Okutsu and Montes.

The OM algorithm is very efficient in the resolution of many arithmetic-geometric tasks in number fields and function fields of algebraic curves. For an account on these applications, we refer the reader to the survey \cite{gen} and the references quoted there. 

MacLane's theory was generalized by M. Vaqui\'e to the case of an arbitrary valued field $(K,v)$, not necessarily rank-one nor discrete \cite{Vaq,Vaq2,Vaq3}. The graded algebra $\gg_{\mu}$  attached to a valuation $\mu$ on $\kx$ is crucial for the development of the theory. In this context, \emph{limit augmentations} and the corresponding \emph{limit key polynomials} appear as a new feature.


In analogy with the case of dimensions 0,1, MacLane-Vaqui\'e's theory should lead to the  developement of efficient algorithms for the resolution of arithmetic-geometric tasks involving valuations of function fields of algebraic varieties of higher dimension.  

A prototype of OM algorithm in this general frame was described by F.J. Herrera Govantes, W. Mahboub, M.A. Olalla Acosta and M. Spivakovsky in \cite{hmos}.
The main obstacle for this procedure to become a real algorithm is the existence of limit augmentations.
Another difficulty is the need to factorize polynomials in the graded algebra, which would be very onerous in practical implementations.

The first obstacle disappears if we deal with \emph{defectless} extensions of $K$. In this case, the auxiliary valuations $\mu_i$ in (\ref{chain0}) are all \emph{inductive} \cite{Vaq2,MN}; that is, no limit augmentations are involved in their construction.


In this paper, we present an OM algorithm to compute extensions of valuations of arbitrary rank to defectless field extensions. More precisely, for each extension of $v$ to an inductive valuation $\mu_r$ on $\kx$ of length $r$ in the chain (\ref{chain0}) of augmentations, we construct in a recursive  way a tower of field extensions  of the residue class field $k$ of $v$:
$$
k=\ka_0\;\subset\; \ka_1\;\subset\;\cdots\;\subset\; \ka_r,
$$
and residual polynomial operators 
$$
R_{\mu_i}\colon K[x]\lra \ka_i[y],\qquad 0\le i\le r,
$$
which facilitate the design of an OM algorithm completely analogous to the classical one.

In the construction of these objects we avoid any computation in the graded algebra.
In particular, these ideas provide an efficient polynomial factorization algorithm for separable defectless polynomials over henselian fields.


The definition of $R_{\mu_r}$ involves the choice of rational functions in $K(x)$ whose images in the graded algebra $\gg_{\mu_r}$ have certain prescribed properties. For our algorithmic approach we need to construct these rational functions for the operators $R_{\mu_0},\dots, R_{\mu_{r-1}}$ in a coherent way.  
To this end, we generalize the methods of \cite{ResidualIdeals}, where this problem was solved for discrete rank-one valuations.  

Section \ref{secIndVals} of the paper is devoted to the construction of the adequate rational functions in $K(x)$. In section \ref{secNewton}, we discuss Newton polygons. In section \ref{secRi}, we define the operators $R_{\mu_i}$ and give some applications.
Our main result is Theorem \ref{recursivecj}, where we prove the recursivity of the residual polynomial operators.




\section{Valuations on polynomial rings}\label{secVals}
\pagestyle{headings}

Let  $(K,v)$ be a valued field with non-trivial group $\g=v(K^*)$ and residue class field $k$.


Let $\mu$ be a valuation on the polynomial ring $\kx$, extending $v$. Let $\g_\mu$ be the value group, and denote by $k_{\mu}$ its residue class field.

We suppose that $\mu$ has trivial support; that is, $\mu$ is the restriction to $\kx$ of some valuation on the field $K(x)$.

For any $\alpha\in\g_\mu$, consider the abelian groups:
$$
\ppa=\{g\in A\mid \mu(g)\ge \alpha\}\supset
\ppa^+=\{g\in A\mid \mu(g)> \alpha\}.
$$    

The \emph{graded algebra of $\mu$ over $\kx$} is the integral domain:
$$
\ggm:=\gr_{\mu}(\kx)=\bigoplus\nolimits_{\alpha\in\g_\mu}\ppa/\ppa^+.
$$

Let $\Delta=\Delta_\mu=\pset_0/\pset_0^+\subset\ggm$ be the subring of homogeneous elements of degree zero. 
There are canonical injective ring homomorphisms: 
$$ k\hooklongrightarrow\Delta\hooklongrightarrow k_{\mu}.$$
In particular, $\Delta$ and $\ggm$ are equipped with a canonical structure of $ k$-algebra. \e

The initial term mapping $\hm\colon \kx\to \ggm$ is given by $\hm(0)=0$ and
$$\hm(g)= g+\pset_{\mu(g)}^+\in\pset_{\mu(g)}/\pset_{\mu(g)}^+, 
$$
if $g\ne0$. For all $g,h\in \kx$ we have $\hm(gh)=\hm(g)\hm(h)$ and
\begin{equation}\label{Hmu}
 \hm(g+h)=\hm(g)+\hm(h), \ \mbox{ if }\mu(g)=\mu(h)=\mu(g+h).
\end{equation}

The next definitions translate properties of the action of  $\mu$ on $\kx$ into algebraic relationships in the graded algebra $\ggm$.

\begin{definition}\label{mu}Let $g,\,h\in \kx$.

We say that $g,h$ are \emph{$\mu$-equivalent}, and we write $g\smu h$, if $\hm(g)=\hm(h)$. 

We say that $g$ is \emph{$\mu$-divisible} by $h$, and we write $h\mmu g$, if $\hm(h)\mid \hm(g)$ in $\ggm$. 

We say that $g$ is $\mu$-irreducible if $\hm(g)\ggm$ is a non-zero prime ideal. 

We say that $g$ is $\mu$-minimal if $g\nmid_\mu f$ for any non-zero $f\in \kx$ with $\deg f<\deg g$.
\end{definition}

The property of $\mu$-minimality admits a relevant characterization.

\begin{lemma}\label{minimal0}
Let $\phi\in \kx$ be a non-constant polynomial. Let 
$$
f=\sum\nolimits_{0\le s}a_s\phi^s, \qquad  a_s\in \kx,\quad \deg(a_s)<\deg(\phi) 
$$
be the canonical $\phi$-expansion of $f\in \kx$.
Then, $\phi$ is $\mu$-minimal if and only if
$$
\mu(f)=\mn\{\mu(a_s\phi^s)\mid 0\le s\},\quad \forall\,f\in \kx.
$$
\end{lemma}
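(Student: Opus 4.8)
The plan is to prove the two implications separately; the substance lies in the direct one.

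Assume first that $\phi$ is $\mu$-minimal, and fix $f\in\kx$ with canonical $\phi$-expansion $f=\sum_{s\ge 0}a_s\phi^s$. Put $m=\mn\{\mu(a_s\phi^s)\mid 0\le s\}$. The inequality $\mu(f)\ge m$ is immediate from the ultrametric inequality for $\mu$, so the point is to exclude $\mu(f)>m$. Supposing it, let $S=\{s : \mu(a_s\phi^s)=m\}$, a non-empty finite set, and reduce the identity $f=\sum_s a_s\phi^s$ modulo $\pset_m^+$ (all the summands and $f$ itself lie in $\pset_m$): the classes of $f$ and of the terms with $s\notin S$ vanish, while the term with $s\in S$ contributes $\hm(a_s)\hm(\phi)^s$ by multiplicativity of $\hm$ and (\ref{Hmu}). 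This yields $\sum_{s\in S}\hm(a_s)\hm(\phi)^s=0$ in $\ggm$. Now let $s_0=\min S$ (so $a_{s_0}\ne 0$, as $s_0\in S$); since $\ggm$ is an integral domain and $\hm(\phi)\ne 0$, I can cancel $\hm(\phi)^{s_0}$ to obtain $\hm(a_{s_0})=-\sum_{s\in S,\,s>s_0}\hm(a_s)\hm(\phi)^{s-s_0}$. If $S=\{s_0\}$ this reads $\hm(a_{s_0})=0$, which is absurd; otherwise the right-hand side is a multiple of $\hm(\phi)$, so $\phi\mmu a_{s_0}$ with $0\ne a_{s_0}$ of degree $<\deg\phi$, contradicting $\mu$-minimality. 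Hence $\mu(f)=m$.

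For the converse I argue by contraposition: if $\phi$ is not $\mu$-minimal, I exhibit a polynomial violating the displayed formula. By definition there is a non-zero $f\in\kx$ with $\deg f<\deg\phi$ and $\phi\mmu f$, say $\hm(f)=\hm(\phi)\,\xi$ in $\ggm$; then $\xi\ne 0$ and $\xi$ is homogeneous of degree $\mu(f)-\mu(\phi)$. Since every non-zero homogeneous element of $\ggm$ is, by construction of the pieces $\pset_\alpha/\pset_\alpha^+$, an initial term, I may write $\xi=\hm(h)$ with $h\ne 0$ and $\mu(h)=\mu(f)-\mu(\phi)$, so that $\hm(f)=\hm(\phi h)$ and therefore $\mu(f-\phi h)>\mu(f)=\mu(\phi h)$. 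Set $g=f-\phi h$. As $\deg f<\deg\phi$, the $\phi$-expansion of $f$ is $f$ itself and that of $\phi h$ has zero constant term, so the constant coefficient $g_0$ of the $\phi$-expansion $g=\sum_s g_s\phi^s$ equals $f$. Consequently $\mn\{\mu(g_s\phi^s)\mid 0\le s\}\le\mu(g_0)=\mu(f)<\mu(g)$, and the displayed equality fails for $g$, as wanted.

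The facts I would use without further comment are standard: the canonical $\phi$-expansion is additive and gets shifted by one upon multiplication by $\phi$, and the non-zero homogeneous elements of $\ggm$ are exactly the initial terms $\hm(g)$. The only genuinely delicate step is the cancellation of $\hm(\phi)^{s_0}$ in the direct implication, where the integral-domain property of $\ggm$ converts a relation among initial terms into the divisibility $\phi\mmu a_{s_0}$ that contradicts minimality; the rest is bookkeeping with $\mu$ and $\phi$-expansions.
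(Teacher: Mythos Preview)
Your proof is correct. The paper states this lemma without proof, as a standard background fact (it is established in the reference \cite{KeyPol}), so there is no argument in the paper to compare against. Both directions of your argument are sound: in the forward implication, the cancellation of $\hm(\phi)^{s_0}$ in the integral domain $\ggm$ correctly yields $\phi\mmu a_{s_0}$ with $\deg a_{s_0}<\deg\phi$, contradicting minimality; in the converse, your construction of $g=f-\phi h$ with $0$-th $\phi$-coefficient $g_0=f$ gives the required counterexample to the displayed formula.
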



\begin{definition}\label{RRmu}
Let $I(\Delta)$ be the set of ideals in $\Delta$, and consider the \emph{residual ideal operator}:
$$
\rr=\rrm\colon \kx\lra I(\Delta),\qquad g\mapsto \left(\hm(g)\ggm\right)\cap \Delta.
$$
\end{definition}

This operator $\rr$ translates questions about the action of  $\mu$ on $\kx$ into ideal-theoretic consi\-derations in the $k$-algebra $\Delta$.


\begin{definition} A \emph{MacLane-Vaqui\'e key polynomial} for $\mu$ is a monic polynomial in $\kx$ which is simultaneously $\mu$-minimal and $\mu$-irreducible. 

The set of key polynomials for $\mu$ will be denoted by $\kpm$.
\end{definition}

A key polynomial is necessarily irreducible in $\kx$.




For any positive integer $m$ we denote:
$$
\g_{\mu,m}=\left\{\mu(a)\in\gm\mid a\in K[x],\ a\ne0,\ \deg(a)<m\right\}\subset\gm.
$$

\begin{proposition}\label{vphi}
Take $\phi\in\kpm$ and let $m=\deg(\phi)$. Consider the maximal ideal $\gp=\phi\kx$, the field $K_\phi=\kx/\gp$,
and the onto mapping:
$$
v_\phi\colon K_\phi^*\longtwoheadrightarrow \g_{\mu,m},\qquad v_\phi(f+\gp)=\mu(a_0),\quad \forall f\in\kx\setminus\gp,
$$
where $a_0\in\kx$ is the common $0$-th coefficient of the $\phi$-expansion of all polynomials in the class $f+\gp$. Then,

\begin{enumerate}
\item The set $\g_{\mu,m}$ is a subgroup of $\gm$ and $\gen{\g_{\mu,m},\mu(\phi)}=\gm$.
\item The mapping $v_\phi$ is a valuation on $K_\phi$ extending $v$, with value group $\g_{\mu,m}$.
\end{enumerate}
\end{proposition}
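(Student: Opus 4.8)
The plan is to fix $\gp=\phi\kx$ (a maximal ideal of $\kx$ since $\phi$ is irreducible, so $K_\phi$ is a field) and, for a nonzero class $\bar f=f+\gp$, to write $a(\bar f)$ for its unique representative of degree $<m$. The first thing to note is that the element $a_0$ in the statement is precisely $a(\bar f)$: if $f=\sum_s a_s\phi^s$ is the $\phi$-expansion then $f\equiv a_0\pmod\gp$ and $\deg(a_0)<m=\deg(\phi)$. This makes $v_\phi$ well defined and shows at once that it is onto $\g_{\mu,m}$ (each $\gamma=\mu(a)$ with $0\ne a$, $\deg(a)<m$, is the image of $a+\gp$, and every $v_\phi(\bar f)=\mu(a(\bar f))$ lies in $\g_{\mu,m}$). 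The ultrametric inequality and the compatibility $v_\phi|_{K^*}=v$ I expect to be routine: if $\bar f+\bar g\ne 0$ then $a(\bar f+\bar g)=a(\bar f)+a(\bar g)$ because the sum already has degree $<m$, so $v_\phi(\bar f+\bar g)\ge\min\{v_\phi(\bar f),v_\phi(\bar g)\}$ follows from $\mu$ being a valuation, and $a(c+\gp)=c$ for $c\in K^*$.

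The heart of the matter, and the step I expect to be the main obstacle, is multiplicativity, since this is the only place where both defining properties of a key polynomial are genuinely used. Given $\bar f,\bar g\in K_\phi^*$, I would set $a=a(\bar f)$ and $b=a(\bar g)$ (nonzero, of degree $<m$), note that $\deg(ab)<2m$ forces the $\phi$-expansion $ab=c_0+c_1\phi$ with $\deg(c_i)<m$ and $a(\bar f\bar g)=c_0$, and then reduce the claim to the equality $\mu(c_0)=\mu(ab)$. By $\mu$-minimality of $\phi$ and Lemma \ref{minimal0}, $\mu(ab)=\min\{\mu(c_0),\mu(c_1)+\mu(\phi)\}$, so only $\mu(ab)<\mu(c_0)$ needs to be ruled out. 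Assuming it, one has $\mu(ab)=\mu(c_1\phi)$, hence in $\ggm$ the identity $\hm(a)\hm(b)=\hm(ab)=\hm(c_1\phi)=\hm(c_1)\hm(\phi)$ (using $\hm(c_0+c_1\phi)=\hm(c_1\phi)$, valid because $\mu(c_0)>\mu(c_1\phi)$). Thus $\hm(\phi)$ divides $\hm(a)\hm(b)$; since $\phi$ is $\mu$-irreducible, $\hm(\phi)\ggm$ is prime, so $\hm(\phi)$ divides $\hm(a)$ or $\hm(b)$, i.e. $\phi\mmu a$ or $\phi\mmu b$ — contradicting $\mu$-minimality of $\phi$, as $a,b$ are nonzero of degree $<\deg(\phi)$.

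Granting multiplicativity, $v_\phi$ is a valuation on the field $K_\phi$ extending $v$, and its value group is by construction $\g_{\mu,m}$; being the image of the homomorphism $v_\phi\colon(K_\phi^*,\cdot)\to(\gm,+)$, this is a subgroup of $\gm$, which gives (2) together with the first assertion of (1). For the remaining assertion $\gen{\g_{\mu,m},\mu(\phi)}=\gm$, the inclusion $\subseteq$ is clear, and for $\supseteq$ I would use that $\gm=\mu(K(x)^*)$ is generated as a group by the values $\mu(f)$ of nonzero $f\in\kx$, so it suffices to see $\mu(f)\in\g_{\mu,m}+\Z\mu(\phi)$; writing the $\phi$-expansion of $f$ and invoking Lemma \ref{minimal0} once more, $\mu(f)=\mu(a_{s_0})+s_0\mu(\phi)$ for some index $s_0$ with $a_{s_0}\ne 0$ of degree $<m$, which is exactly what is needed.
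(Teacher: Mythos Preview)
Your proof is correct. The paper itself does not give a proof of this proposition: like several other results in Section~\ref{secVals} (e.g.\ Propositions~\ref{maxideal}, \ref{maxsubfield} and Theorem~\ref{kstructure}), it is stated without argument and implicitly drawn from the companion paper \cite{KeyPol}. So there is no ``paper's own proof'' to compare against.

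Your argument follows what is in fact the standard route and uses exactly the tools available at that point in the text: Lemma~\ref{minimal0} for the value of $\mu$ on a $\phi$-expansion, $\mu$-minimality to forbid $\phi\mmu a$ when $\deg(a)<m$, and $\mu$-irreducibility to pass from $\hm(\phi)\mid\hm(a)\hm(b)$ to a divisor of one factor. The only point worth making explicit (you use it implicitly) is that $c_0\ne0$: since $K_\phi$ is a field, $\bar f\bar g\ne0$, so its degree-$<m$ representative is nonzero. With that, the case analysis $\mu(ab)=\min\{\mu(c_0),\mu(c_1\phi)\}$ is clean, and the contradiction from $\mu(ab)<\mu(c_0)$ goes through as written. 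The deduction of item~(1) from the multiplicativity of $v_\phi$ (so that $\g_{\mu,m}=v_\phi(K_\phi^*)$ is automatically a subgroup) and from a second application of Lemma~\ref{minimal0} for the generation statement is also fine.
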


Denote the maximal ideal, the valuation ring and the residue class field of $v_\phi$ by: 
$$\m_\phi\subset\oo_\phi\subset K_\phi,\qquad k_\phi=\oo_\phi/\m_\phi.$$

\begin{proposition}\label{maxideal}
If $\phi\in\kpm$, then 
$\rr(\phi)$ is the kernel of the onto homomorphism $$\Delta\longtwoheadrightarrow k_\phi,\qquad g+\pset^+_0\ \longmapsto\ \left(g+\gp\right)+\m_\phi.$$

In particular, $\rr(\phi)$ is a maximal ideal in $\Delta$.
\end{proposition}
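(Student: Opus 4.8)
The plan is to realize both ideals concretely in terms of $\phi$-expansions, once the displayed map is known to be a well-defined surjective ring homomorphism. Write $\pi\colon\Delta\to k_\phi$ for the map $g+\pset_0^+\mapsto(g+\gp)+\m_\phi$. To see it is well defined, let $g\in\kx$ with $\mu(g)\ge0$; if $g\in\gp$ there is nothing to check, and otherwise Proposition \ref{vphi} gives $v_\phi(g+\gp)=\mu(a_0)$, where $a_0$ is the $0$-th coefficient of the $\phi$-expansion of $g$, while the $\mu$-minimality characterization of Lemma \ref{minimal0} gives $\mu(a_0)\ge\mu(g)\ge0$. Hence $g+\gp\in\oo_\phi$, and if moreover $\mu(g)>0$ then $v_\phi(g+\gp)>0$, so $g+\gp\in\m_\phi$; thus $\pi$ kills $\pset_0^+$ and is a ring homomorphism, being induced by the reduction $\kx\to\kx/\gp$ followed by $\oo_\phi\twoheadrightarrow k_\phi$. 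Surjectivity is immediate: for $f+\gp\in\oo_\phi$ with $0$-th $\phi$-coefficient $a_0$ one has $f-a_0\in\gp$ and $\mu(a_0)=v_\phi(f+\gp)\ge0$, so $a_0+\pset_0^+\in\Delta$ maps to $(f+\gp)+\m_\phi$.

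Next I would reduce the equality $\rr(\phi)=\ker\pi$ to a statement about a single initial term. Since $\Delta=\pset_0/\pset_0^+$, every nonzero element of $\Delta$ equals $\hm(g)$ for some $g\in\kx$ with $\mu(g)=0$, and the zero element lies in both ideals trivially. For such a $g$, by Definitions \ref{mu} and \ref{RRmu} we have $\hm(g)\in\rr(\phi)$ iff $\hm(\phi)\mid\hm(g)$ in $\ggm$, i.e. $\phi\mmu g$; and $\pi(\hm(g))=0$ iff $v_\phi(g+\gp)>0$ iff $\mu(a_0)>0$, where $a_0$ is the $0$-th $\phi$-coefficient of $g$ (with the convention $\mu(0)=\infty$). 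So it suffices to prove: for $g$ with $\mu(g)=0$, one has $\phi\mmu g$ if and only if $\mu(a_0)>0$.

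For that last step I would expand $g=\sum_{s}a_s\phi^s$ and invoke Lemma \ref{minimal0}, which gives $\mu(g)=\mn\{\mu(a_s\phi^s)\mid 0\le s\}=0$, hence $\hm(g)=\sum_{s\,:\,\mu(a_s\phi^s)=0}\hm(a_s)\hm(\phi)^s$. If $\mu(a_0)>0$ (or $a_0=0$), the index $s=0$ does not contribute, so every surviving term is a multiple of $\hm(\phi)$ and $\phi\mmu g$. Conversely, if $\mu(a_0)=0$ then $a_0\ne0$ and $\hm(g)=\hm(a_0)+\hm(\phi)\,\eta$ for some $\eta\in\ggm$; were $\phi\mmu g$, we would get $\hm(\phi)\mid\hm(a_0)$, i.e. $\phi\mmu a_0$ with $a_0\ne0$ and $\deg a_0<\deg\phi$, contradicting the $\mu$-minimality of $\phi$. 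This proves the equivalence, hence $\rr(\phi)=\ker\pi$; and since $\pi$ is onto the field $k_\phi$, the quotient $\Delta/\rr(\phi)\cong k_\phi$ is a field, so $\rr(\phi)$ is a maximal ideal.

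The only delicate point is the ``only if'' direction of the last equivalence: one must rule out any cancellation that could let $\hm(\phi)$ divide $\hm(g)$ when the unit-degree summand $\hm(a_0)$ is present. This is exactly where $\mu$-minimality enters — applied to the strictly lower-degree polynomial $a_0$ — together with the observation that the contributions of $\phi^s$ with $s\ge1$ to $\hm(g)$ are manifestly multiples of $\hm(\phi)$. Everything else is bookkeeping with $\phi$-expansions and the description of $v_\phi$ from Proposition \ref{vphi}.
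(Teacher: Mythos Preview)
The paper states Proposition~\ref{maxideal} without proof (it is quoted as a known result, presumably from \cite{KeyPol}), so there is no ``paper's own proof'' to compare against. Your argument is self-contained and correct: you verify well-definedness and surjectivity of $\pi$ via Proposition~\ref{vphi} and Lemma~\ref{minimal0}, and then reduce $\rr(\phi)=\ker\pi$ to the equivalence ``$\phi\mmu g\iff\mu(a_0)>0$'' for $g$ with $\mu(g)=0$, which you settle using the decomposition $g=a_0+\phi h$ and the $\mu$-minimality of $\phi$.

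One cosmetic remark: in the ``if'' direction you write $\hm(g)=\sum_{s:\mu(a_s\phi^s)=0}\hm(a_s)\hm(\phi)^s$, implicitly iterating the two-term identity~(\ref{Hmu}); this is fine but requires the partial sums to keep $\mu$-value $0$, which is not automatic. The cleaner route (which you essentially also use in the converse) is to write $g=a_0+\phi h$: if $\mu(a_0)>0$ then $\mu(\phi h)=\mu(g)=0$ by the ultrametric inequality, so $g\smu\phi h$ and $\hm(g)=\hm(\phi)\hm(h)$ directly. With that tweak the proof is clean and complete.
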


\begin{proposition}\label{maxsubfield}
Let $\kappa\subset\Delta$ be the algebraic closure of $k$ in $\Delta$. Then, $\kappa$ is a subfield such that $\kappa^*=\Delta^*$. 
Moreover, if $\phi\in\kpm$ has minimal degree, the mapping $\kappa\hookrightarrow\Delta\twoheadrightarrow k_\phi$ is an isomorphism. 
\end{proposition}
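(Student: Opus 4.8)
The plan is to establish three things in sequence: that $\kappa$ is a field, that $\kappa^* = \Delta^*$, and that the composite $\kappa \hookrightarrow \Delta \twoheadrightarrow k_\phi$ is an isomorphism when $\phi$ has minimal degree. First I would recall that $\Delta = \pset_0/\pset_0^+$ is an integral domain (being a graded piece of the domain $\ggm$) which is a $k$-algebra via the canonical injection $k \hookrightarrow \Delta$. The algebraic closure of $k$ in a domain that is a $k$-algebra is the set of elements satisfying a monic polynomial over $k$; this is always a subring, and here it is even a field: if $\xi \in \kappa$ is nonzero and satisfies $\xi^n + c_{n-1}\xi^{n-1} + \cdots + c_0 = 0$ with $c_i \in k$ and $c_0 \neq 0$ (which we may assume by cancelling powers of $\xi$, using that $\Delta$ is a domain), then $\xi \cdot (-c_0^{-1})(\xi^{n-1} + \cdots + c_1) = 1$, so $\xi^{-1} \in \Delta$ and is again algebraic over $k$, hence lies in $\kappa$. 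So $\kappa$ is a subfield of $\Delta$.

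Next, for $\kappa^* = \Delta^*$: the inclusion $\kappa^* \subset \Delta^*$ is immediate since $\kappa$ is a field inside $\Delta$. For the reverse, take $u \in \Delta^*$, so $u = \hm(g)$ and $u^{-1} = \hm(h)$ for suitable $g,h \in \kx$ with $\mu(g) = \mu(h) = 0$ and $gh \smu 1$. The key point is that $u$ must be algebraic over $k$. Here I would invoke $\mu$-minimality of a minimal-degree key polynomial, or more directly argue that a homogeneous unit of degree zero in $\ggm$ pulls back to something detectable by the valuation $v_\phi$ of Proposition \ref{vphi}: since $u$ is a unit in $\Delta$, its image under the surjection $\Delta \twoheadrightarrow k_\phi$ of Proposition \ref{maxideal} is a nonzero element, and the point is that the reduction map is injective on $\Delta^*$ for degree reasons — an element of $\pset_0$ mapping into $\m_\phi$ would have strictly positive $v_\phi$-value, forcing it into $\pset_0^+$ when combined with minimality of $\deg\phi$. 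Thus $\Delta^* \hookrightarrow k_\phi^*$, and since $k_\phi/k$ is an algebraic extension (the residue extension of a valuation with value group $\g_{\mu,m}$ of a field algebraic over... — in fact one uses that $\deg\phi$ is minimal so that $k_\phi$ is generated over $k$ by residues of bounded-degree polynomials, each algebraic), every element of $\Delta^*$ is algebraic over $k$, hence in $\kappa^*$.

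Finally, for the isomorphism statement: Proposition \ref{maxideal} already gives a surjection $\Delta \twoheadrightarrow k_\phi$ with kernel $\rr(\phi)$. Restricting to $\kappa$, I would show the map $\kappa \to k_\phi$ is (a) injective, because $\kappa$ is a field and the map is a nonzero ring homomorphism, and (b) surjective, because every element of $k_\phi$ is a residue $(g+\gp) + \m_\phi$ with $\mu(g) = 0$ (after scaling $g$ by an element of $K^*$, using $v_\phi(g+\gp) = \mu(a_0) \in \g_{\mu,m} \subset v(K^*) \cdot$ — more precisely, using part (1) of Proposition \ref{vphi} and the minimality of $m$, one reduces to $\mu(g)=0$), and such a $g$ has $\hm(g) \in \Delta$ a unit mapping to the given class; combined with $\Delta^* = \kappa^*$, this unit lies in $\kappa$. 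So the restriction to $\kappa$ is already onto, giving the claimed isomorphism. The main obstacle I anticipate is step two — proving $\Delta^* \subset \kappa^*$, i.e. that every degree-zero unit in the graded algebra is algebraic over $k$ — which genuinely uses the minimality of $\deg\phi$ and the structure of $v_\phi$ from Proposition \ref{vphi}; the surjectivity in step three is closely tied to this and should fall out of the same analysis. The field and $\kappa^* \supset \Delta^*$... rather $\kappa^* \subset \Delta^*$ direction is routine.
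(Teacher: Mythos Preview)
The paper does not prove Proposition~\ref{maxsubfield}; it is stated without proof in Section~\ref{secVals} as a background fact, presumably established in \cite{KeyPol}. So there is no in-paper argument to compare against, and I can only assess your proposal on its own terms.

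Your argument that $\kappa$ is a field is correct and standard. The real content is the inclusion $\Delta^*\subset\kappa^*$, and here your sketch has a genuine gap. You write that ``an element of $\pset_0$ mapping into $\m_\phi$ would have strictly positive $v_\phi$-value, forcing it into $\pset_0^+$''. This conflates $\mu(g)$ with $v_\phi(g+\gp)$: by Proposition~\ref{vphi} the latter equals $\mu(a_0)$, the $\mu$-value of the constant term of the $\phi$-expansion of $g$, and these coincide only when $\deg g<\deg\phi$. You give no mechanism for reducing a general unit $u=\hm(g)\in\Delta^*$ to a representative of degree below $\deg\phi$; and the map $\Delta\to k_\phi$ is certainly not injective, since its kernel is the nonzero ideal $\rr(\phi)$.

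Even granting a set-theoretic injection $\Delta^*\hookrightarrow k_\phi^*$, your conclusion ``every element of $\Delta^*$ is algebraic over $k$'' does not follow. Knowing that the image $\bar u\in k_\phi$ satisfies $p(\bar u)=0$ for some $p\in k[T]$ tells you only that $p(u)\in\rr(\phi)$ in $\Delta$, not that $p(u)=0$. To close this you would need the map $\Delta\to k_\phi$ to be injective on the $k$-subalgebra $k[u]$, which requires genuine structural input about $\ggm$ --- for instance, that every homogeneous unit of degree zero is $\mu$-equivalent to a polynomial of degree $<\deg\phi$, together with control of how $\rr(\phi)$ meets the subring such elements generate. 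Your surjectivity argument for $\kappa\to k_\phi$ then inherits the same gap, since it explicitly invokes $\Delta^*=\kappa^*$.
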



The extension $\mu/v$ is \emph{commensurable} if $\g_\mu/\g$ is a torsion group. In this case, there is a canonical embedding
$$
\g_\mu\hooklongrightarrow \gq:=\g\otimes\Q.
$$
By composing $\mu$ with this embedding, we obtain a $\gq$-valued valuation 
$$\kx \longrightarrow \gq \cup \lbrace\infty\rbrace.$$
Conversely, any $\gq$-valued extension of $v$ to $\kx$ is commensurable over $v$.

Two commensurable extensions of $v$ are equivalent if and only if their corresponding $\gq$-valued valuations coincide.
Hence, we may identify the set of equivalence classes of comensurable valuations extending $v$ with the set
$$\V:=\V(K,v)=\lbrace \mu\colon \kx\lra \gq \cup \lbrace\infty\rbrace \mid \mu\mbox{ valuation, } \mu_{\vert K}=v\rbrace.$$

There is a natural partial ordering in the set $\V$:
$$\mu\le \mu' \quad\mbox{ if }\quad\mu(f) \le \mu'(f), \quad \forall\,f\in \kx. $$

\begin{theorem}\label{kstructure}\mbox{\null}
Suppose $\mu/v$ commensurable and $\kpm\ne\emptyset$. Take $\phi\in\kpm$ of minimal degree, and let  $e=\left(\gm\colon \g_{\mu,\deg(\phi)}\right)$. 
Let $\epsilon\in\ggm^*$ be a unit of degree $-e\mu(\phi)$. 

Then, $\xi=\hm(\phi)^e\epsilon\in\Delta$ is transcendental over $\kappa$, and
$$\Delta=\kappa[\xi],\qquad \op{Frac}(\Delta)=\kappa(\xi)\simeq \km,$$
the last isomorphism being induced by the canonical embedding $\Delta\hookrightarrow \km$.
\end{theorem}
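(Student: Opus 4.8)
Write $m=\deg\phi$. The plan is to treat the three assertions in turn. That $\xi\in\Delta$ is immediate from the grading: $\hm(\phi)$ is homogeneous of degree $\mu(\phi)$, so $\xi=\hm(\phi)^e\epsilon$ is homogeneous of degree $e\mu(\phi)-e\mu(\phi)=0$. For the transcendence of $\xi$ over $\kappa$ I would argue by contradiction. Since $\phi$ is $\mu$-irreducible, $\hm(\phi)\ggm$ is a nonzero, hence proper, prime ideal of the domain $\ggm$, so $\hm(\phi)\notin\ggm^*$, whence $\hm(\phi)^e\notin\ggm^*$, and therefore $\xi\neq0$ while $\xi\notin\ggm^*$. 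If $\xi$ were algebraic over $\kappa$ it would be algebraic over $k$ (as $\kappa/k$ is algebraic), hence $\xi\in\kappa$ because $\kappa$ is the algebraic closure of $k$ in $\Delta$ (Proposition \ref{maxsubfield}); but then $\xi\in\kappa^*=\Delta^*\subseteq\ggm^*$ by Proposition \ref{maxsubfield}, a contradiction. So $\xi$ is transcendental over $\kappa$ and $\kappa[\xi]$ is a polynomial ring.

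Next I would prove $\Delta=\kappa[\xi]$; only the inclusion $\Delta\subseteq\kappa[\xi]$ is at issue. A nonzero $\delta\in\Delta$ equals $\hm(h)$ for some $h\in\kx$ with $\mu(h)=0$. Writing the $\phi$-expansion $h=\sum_s a_s\phi^s$ and using that $\phi$ is $\mu$-minimal (Lemma \ref{minimal0}) one gets $\hm(h)=\sum_{s\in S}\hm(a_s)\hm(\phi)^s$, where $S=\{s:\mu(a_s\phi^s)=0\}$. For $s\in S$ we have $a_s\neq0$, $\deg a_s<m$ and $\mu(a_s)=-s\mu(\phi)\in\g_{\mu,m}$, so $s\mu(\phi)\in\g_{\mu,m}$; since the class of $\mu(\phi)$ generates the cyclic group $\gm/\g_{\mu,m}$ of order $e$ (Proposition \ref{vphi}), this forces $e\mid s$. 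Substituting $\hm(\phi)^{et}=(\hm(\phi)^e)^t=\xi^t\epsilon^{-t}$ yields $\delta=\sum_t c_t\xi^t$ with $c_t=\hm(a_{et})\epsilon^{-t}$, and a degree count gives $c_t\in\Delta$. The crucial point is that in fact $c_t\in\kappa$: this follows once we know that $\hm(a)\in\ggm^*$ for every nonzero $a\in\kx$ with $\deg a<m$, for then $c_t\in\ggm^*\cap\Delta=\Delta^*=\kappa^*$ by Proposition \ref{maxsubfield}. Hence $\delta\in\kappa[\xi]$.

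For the last isomorphism, the canonical injection $\Delta\hookrightarrow\km$ extends to an injection $\kappa(\xi)=\op{Frac}(\Delta)\hookrightarrow\km$, and I would check surjectivity by clearing denominators. A class of $\km$ is represented by $f/g$ with $f,g\in\kx$, $g\neq0$, and, discarding the trivial case, we may assume $\mu(f)=\mu(g)=:\gamma$. Every element of $\gm$ is the value of a nonzero polynomial: writing $-\gamma=\gamma_0+n\mu(\phi)$ with $\gamma_0\in\g_{\mu,m}$, $n\in\Z$, and using $e\mu(\phi)\in\g_{\mu,m}$ to reduce to the case $n\ge0$, we get $-\gamma=\mu(b)$ for $b=a\phi^n$ with a suitable $a\in\kx$ of degree $<m$. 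Then $bf,bg\in\pset_0$ and $f/g=(bf)/(bg)$, so the class of $f/g$ is the image of $\hm(bf)/\hm(bg)\in\op{Frac}(\Delta)$. Thus $\op{Frac}(\Delta)=\kappa(\xi)\simeq\km$, the isomorphism being induced by $\Delta\hookrightarrow\km$.

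The only non-routine ingredient is the statement invoked in the second paragraph: $\hm(a)\in\ggm^*$ whenever $0\neq a\in\kx$ has $\deg a<m$; everything else is bookkeeping with Lemma \ref{minimal0}, Proposition \ref{vphi} and Proposition \ref{maxsubfield}. I expect to derive this unit statement from the structure of $\ggm$. Since $\phi$ is $\mu$-minimal, $\ggm/\hm(\phi)\ggm$ is canonically isomorphic to the graded algebra $\gr_{v_\phi}(K_\phi)$, which is a \emph{graded field} because $K_\phi$ is a field; and the rule sending the initial term of the class of $a$ in $K_\phi$ (for $\deg a<m$) to $\hm(a)$ defines a graded-ring section of this projection, so such $\hm(a)$ are already units in $\ggm$. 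Checking the multiplicativity of this section — once again a consequence of $\mu$-minimality of $\phi$ — is where the real work lies.
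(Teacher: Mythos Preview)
The paper does not prove Theorem~\ref{kstructure}. All of Section~\ref{secVals} consists of background results on valuations on $\kx$, stated without proof and drawn from \cite{KeyPol}; this theorem is one of them. So there is no proof in the paper to compare yours against.

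Your argument is correct. The transcendence of $\xi$ via $\kappa^*=\Delta^*$ (Proposition~\ref{maxsubfield}) and the surjectivity of $\op{Frac}(\Delta)\hookrightarrow\km$ by clearing denominators are both clean. For the inclusion $\Delta\subseteq\kappa[\xi]$, your decomposition $\delta=\sum_t c_t\,\xi^t$ with $c_t=\hm(a_{et})\,\epsilon^{-t}$ is exactly the right computation, and you correctly isolate the one substantive ingredient: that $\hm(a)\in\ggm^*$ for every nonzero $a\in\kx$ with $\deg a<m$. This is \cite[Prop.~3.5]{KeyPol}, and the present paper itself invokes it later with that citation (immediately after Definition~\ref{defcj}); you may simply quote it.

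One caution about your sketch for the unit statement. The map sending the initial form of $\bar a$ in $\gr_{v_\phi}(K_\phi)$ (with $\deg a<m$) to $\hm(a)$ is well defined, but verifying that it is multiplicative is not a formality: writing $ab=q\phi+c$ with $\deg c<m$, you need $\mu(q\phi)>\mu(c)$, not merely $\ge$, and this is precisely where the minimality of $\deg\phi$ among key polynomials enters. So the ``real work'' you flag is genuinely there; the details are carried out in \cite{KeyPol}.
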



\begin{definition}\label{muprima}
Let $\iota\colon \gm\hookrightarrow \g'$ be an order-preserving embedding of $\gm$ into another abelian ordered group.
Take $\phi\in \kpm$ and $\ga\in \g'$ any element such that $\mu(\phi)<\ga$. 

The \emph{augmented valuation} of $\mu$ with respect to these data is the mapping $$\mu'\colon\kx\rightarrow \g' \cup \left\{\infty\right\}$$ which assigms to any $f\in\kx$, with $\phi$-expansion $f=\sum_{0\le s}a_s\phi^s$, the value:
$$
\mu'(f)=\mn\left\{\mu'(a_s\phi^s)\mid 0\le s\right\}=\mn\left\{\mu(a_s)+s\ga\mid 0\le s\right\}.
$$
We use the notation $\mu'=[\mu;\phi,\ga]$.
\end{definition}

There is a canonical homomorphism of graded algebras:
$$\ggm\lra\gg_{\mu'},\qquad \hm(f)\longmapsto
\begin{cases}\hmp(f),& \mbox{ if }\mu(f)=\mu'(f),\\ 0,& \mbox{  if }\mu(f)<\mu'(f).\end{cases}$$

\begin{proposition}\label{extension} Let $\mu'=[\mu;\phi,\ga]$, and let $f\in \kx$ be a non-zero polynomial.
\begin{enumerate}
\item[(a)] The valuation $\mu'$ extends $v$ and satisfies $\mu(f)\le \mu'(f)$ for all $f\in\kx$. 

Equality holds if and only if $\phi\nmid_{\mu}f$. In this case,  $\hmp(f)$ is a unit in $\ggmp$.
\item[(b)] The kernel of the homomorphism $\ggm\to\gg_{\mu'}$ is the prime ideal $\hm(\phi)\,\ggm$.
\item[(c)] The polynomial $\phi$ is a key polynomial for $\mu'$ of minimal degree.
\item[(d)] The value group of $\mu'$ is \ $\g_{\mu'}=\gen{\g_{\mu,\deg(\phi)},\ga}\subset\g'$. \end{enumerate}
\end{proposition}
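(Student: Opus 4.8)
The plan is to verify the four assertions (a)–(d) of Proposition \ref{extension} directly from the defining formula for $\mu'=[\mu;\phi,\ga]$, using the $\phi$-expansion of $f$ and the $\mu$-minimality of $\phi$ (which is available since $\phi\in\kpm$ by Lemma \ref{minimal0}). Throughout, write $f=\sum_{0\le s}a_s\phi^s$ for the canonical $\phi$-expansion, with $\deg(a_s)<\deg(\phi)$.

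**For (a)**, that $\mu'$ restricts to $v$ on $K$ is immediate since constants have $\phi$-expansion $a_0=f$, so $\mu'(f)=\mu(a_0)=v(f)$. That $\mu'$ is a valuation (i.e. satisfies $\mu'(fg)=\mu'(f)+\mu'(g)$ and the ultrametric inequality) I would prove by the standard argument: the map $\mu(a_s)+s\ga$ is, for each $s$, the value under $\mu$ composed with a ``twist'', and one checks that the $\phi$-adic valuation together with $\mu$ on the coefficient ring assembles into a valuation on $\kx$ — the key point being that the leading term in the minimum is not destroyed under multiplication, which uses that $\ga>\mu(\phi)$ so that carries in the $\phi$-expansion of a product strictly increase the exponent. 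The inequality $\mu(f)\le\mu'(f)$ follows by comparing term-by-term: $\mu(a_s\phi^s)=\mu(a_s)+s\mu(\phi)\le\mu(a_s)+s\ga=\mu'(a_s\phi^s)$, and then by $\mu$-minimality of $\phi$ (Lemma \ref{minimal0}) the left side's minimum over $s$ equals $\mu(f)$, while the right side's minimum is $\mu'(f)$ by definition. Equality $\mu(f)=\mu'(f)$ forces the minimum on the right to be attained at $s=0$ when $a_0\ne 0$; conversely $\phi\nmid_\mu f$ means $\hm(\phi)\nmid\hm(f)$, and I would translate this (again via Lemma \ref{minimal0} and the structure of $\hm$) into the statement that $\mu(f)=\mu(a_0)$, i.e. the $s=0$ term strictly dominates, which gives equality; the unit claim then follows because $\hmp(f)=\hmp(a_0)$ and $\hmp(a_0)$ is a unit in $\ggmp$ as $\deg(a_0)<\deg(\phi)$ forces $\phi\nmid_{\mu'}a_0$.

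**For (b)**, the kernel contains $\hm(\phi)\ggm$ since $\mu(\phi)<\mu'(\phi)=\ga$ by hypothesis, so $\hm(\phi)\mapsto 0$. For the reverse inclusion, take homogeneous $\hm(g)$ with $\mu(g)<\mu'(g)$; writing the $\phi$-expansion of $g$ and using part (a), every term $a_s\phi^s$ with $a_s\ne 0$ and $s=0$ would have $\mu=\mu'$, so the defect $\mu(g)<\mu'(g)$ forces the $\mu$-minimal term of $g$ to involve a positive power of $\phi$ — more precisely, all terms attaining $\mu(g)$ in the expansion have $s\ge 1$, and by (\ref{Hmu}) this means $\hm(g)=\hm(\phi)\hm(h)$ for a suitable $h$ (collect the common factor $\phi$). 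Hence $\hm(g)\in\hm(\phi)\ggm$, and since $\hm(\phi)\ggm$ is prime (Proposition-level fact: $\phi$ is $\mu$-irreducible) the kernel is exactly this ideal.

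**For (c) and (d)**: part (d) is read off directly — the values $\mu'(f)$ all lie in $\gen{\g_{\mu,\deg(\phi)},\ga}$ since $\mu(a_s)\in\g_{\mu,\deg\phi}$ and $\mu'(a_s\phi^s)=\mu(a_s)+s\ga$, and conversely $\ga=\mu'(\phi)$ and every element of $\g_{\mu,\deg\phi}$ is some $\mu(a)=\mu'(a)$ with $\deg a<\deg\phi$, using that $\mu=\mu'$ on such $a$ by (a). For (c), $\mu'$-minimality of $\phi$ follows from Lemma \ref{minimal0} and the very definition of $\mu'$: the formula $\mu'(f)=\mn\{\mu'(a_s\phi^s)\}$ is precisely the minimality condition. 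That $\phi$ is $\mu'$-irreducible requires showing $\hmp(\phi)\ggmp$ is a non-zero nonprime... rather, prime: one identifies $\ggmp/\hmp(\phi)\ggmp$ with a domain; I would argue this via the valuation $v_\phi$ of Proposition \ref{vphi} applied now to $\mu'$, noting $K_\phi$ is unchanged and $v_\phi$ for $\mu'$ has value group $\g_{\mu,\deg\phi}$ still. Minimality of the degree: any key polynomial for $\mu'$ has degree $\ge\deg\phi$, because a polynomial $a$ of smaller degree satisfies $\mu(a)=\mu'(a)$ and $\hmp(a)$ is a unit by (a), so $a$ cannot be $\mu'$-irreducible. \textbf{The main obstacle} I expect is the careful bookkeeping in (a) showing $\mu'$ is genuinely multiplicative: one must control how the $\phi$-expansion of a product $fg$ relates to those of $f$ and $g$, since multiplying out $\phi^s\cdot\phi^t=\phi^{s+t}$ is fine but the coefficients $a_sb_t$ may have degree $\ge\deg\phi$ and must be re-expanded, producing carries into higher $\phi$-powers; the point is that each carry raises the $\phi$-exponent by at least one and hence (as $\ga>\mu(\phi)$) strictly raises the contribution to $\mu'$, so the minimal term of $fg$ is the product of the minimal terms of $f$ and $g$ and is not cancelled. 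This is the classical MacLane computation and is the technical heart of the proposition.
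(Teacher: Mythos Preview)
The paper does not include a proof of this proposition; it is stated in Section~1 as part of the background material on valuations and graded algebras, with implicit reference to the sources \cite{KeyPol} and \cite{Vaq}. So there is no ``paper's own proof'' to compare against.

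Your sketch follows the classical MacLane route and is broadly correct, but two points deserve tightening. First, in (a) you justify that $H_{\mu'}(a_0)$ is a unit by saying ``$\deg(a_0)<\deg(\phi)$ forces $\phi\nmid_{\mu'}a_0$''. Not being divisible by the prime $H_{\mu'}(\phi)$ is not the same as being invertible in $\mathcal{G}_{\mu'}$; the unit claim genuinely requires that $\phi$ be a key polynomial for $\mu'$ of \emph{minimal} degree (your part~(c)), after which it follows from \cite[Prop.~3.5]{KeyPol}. So the logical order should be: prove that $\mu'$ is a valuation, then (c), then the unit assertion in (a).

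Second, in the multiplicativity argument you attribute non-cancellation of the minimal term solely to ``$\gamma>\mu(\phi)$, so carries strictly increase the exponent''. That inequality does control the carries coming \emph{up} from lower $\phi$-levels, but it does not by itself prevent the level-$(s_0+t_0)$ coefficient from being $\mu$-equivalent to a multiple of $\phi$ (and hence losing its constant part after re-expansion). What rules this out is the $\mu$-\emph{irreducibility} of $\phi$: since $\deg a_{s_0},\deg b_{t_0}<\deg\phi$ we have $\phi\nmid_\mu a_{s_0}$ and $\phi\nmid_\mu b_{t_0}$, hence $\phi\nmid_\mu a_{s_0}b_{t_0}$, so the $\phi$-remainder of the level-$(s_0+t_0)$ coefficient retains the correct $\mu$-value. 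You should invoke both hypotheses on $\phi$ ($\mu$-minimal \emph{and} $\mu$-irreducible), not just minimality, in this step.
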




Propositions \ref{extension}, \ref{maxideal} and \ref{maxsubfield} provide a precise description of the kernel and image of the canonical homomorphism $\Delta_\mu\to\Delta_{\mu'}$. 

\begin{lemma}\label{imagedelta}
Let $\mu'=[\mu;\phi,\ga]$. Then,  
$$
\k(\Delta_\mu\to \Delta_{\mu'})=\rr(\phi),\qquad \im(\Delta_\mu\to \Delta_{\mu'})=\kappa_{\mu'}\hookrightarrow k_{\mu'},
$$
where $\kappa_{\mu'}\simeq k_\phi$ is the algebraic closure of $k$ simultaneously in $\Delta_{\mu'}$ and in $k_{\mu'}$. 

\end{lemma}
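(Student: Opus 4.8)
The plan is to obtain both equalities by restricting to the degree-zero pieces the canonical homomorphism of graded algebras $\pi\colon\ggm\to\ggmp$ recalled just before Proposition \ref{extension}. For the kernel there is essentially nothing to do: by Proposition \ref{extension}(b) we have $\k(\pi)=\hm(\phi)\,\ggm$, the map $\Delta_\mu\to\Delta_{\mu'}$ is precisely the restriction of $\pi$ to $\pset_0/\pset_0^+$, and hence its kernel is $\hm(\phi)\,\ggm\cap\Delta_\mu$, which is exactly $\rr(\phi)$ by Definition \ref{RRmu}.

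For the image I would first establish the inclusion $\im(\Delta_\mu\to\Delta_{\mu'})\subseteq\kappa_{\mu'}$. Every nonzero element of $\Delta_\mu$ has the form $\hm(g)$ with $\mu(g)=0$; if $\phi\mmu g$ it lies in $\rr(\phi)$ and maps to $0$, and otherwise $\phi\nmid_\mu g$, so Proposition \ref{extension}(a) gives $\mu'(g)=\mu(g)=0$ and makes $\hmp(g)$ a unit of $\ggmp$, thus a homogeneous unit of degree $0$, thus an element of $\Delta_{\mu'}^*=\kappa_{\mu'}^*$ by Proposition \ref{maxsubfield} applied to $\mu'$. So $\Delta_\mu\to\Delta_{\mu'}$ factors through $\kappa_{\mu'}$ and, together with the computation of the kernel, induces an injection of $k$-algebras $\Delta_\mu/\rr(\phi)\hookrightarrow\kappa_{\mu'}$ whose image is the image in question.

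It then remains to see this injection is onto. Proposition \ref{maxideal} (valid since $\phi\in\kp(\mu)$) tells us $\rr(\phi)$ is maximal and $\Delta_\mu/\rr(\phi)\simeq k_\phi$ as $k$-algebras, so the image is already a \emph{field}, a subfield of $\kappa_{\mu'}$ isomorphic to $k_\phi$. On the other hand $\phi$ is a key polynomial for $\mu'$ of minimal degree by Proposition \ref{extension}(c), and Proposition \ref{maxsubfield} applied to $\mu'$ gives $\kappa_{\mu'}\simeq k_\phi$ as $k$-algebras too — here $v_\phi$, hence $k_\phi$, is literally the same valuation and field whether built from $\mu$ or from $\mu'$, since $\mu$ and $\mu'$ coincide on polynomials of degree $<\deg\phi$. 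Composing these identifications turns the inclusion $\Delta_\mu/\rr(\phi)\hookrightarrow\kappa_{\mu'}$ into a $k$-algebra endomorphism of the field $k_\phi$; as $k_\phi\simeq\kappa_{\mu'}$ is algebraic over $k$, such an endomorphism is necessarily surjective, because it permutes the finite nonempty set of $k_\phi$-roots of the minimal polynomial over $k$ of any prescribed element. Hence the inclusion is an isomorphism, i.e. $\im(\Delta_\mu\to\Delta_{\mu'})=\kappa_{\mu'}$; the last sentence of the statement merely recalls the meaning of the notation $\kappa_{\mu'}$ fixed by Proposition \ref{maxsubfield} applied to $\mu'$.

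The one step needing genuine care is this last surjectivity: without it one only knows that the image is a subfield of $\kappa_{\mu'}$ abstractly isomorphic to the whole, which in general would not force equality; what rules this out is precisely that $\kappa_{\mu'}/k$ is algebraic, so that the resulting self-embedding of $k_\phi$ must be onto. Everything else is a routine transcription to the degree-zero part of $\pi$ of Propositions \ref{extension}, \ref{maxideal} and \ref{maxsubfield}.
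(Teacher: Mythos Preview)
Your proof is correct and follows exactly the route the paper intends: the paper does not give an explicit proof of this lemma but simply remarks that ``Propositions \ref{extension}, \ref{maxideal} and \ref{maxsubfield} provide a precise description of the kernel and image of the canonical homomorphism $\Delta_\mu\to\Delta_{\mu'}$,'' and your argument is precisely the natural unpacking of that sentence. Your explicit treatment of the surjectivity step---reducing to a $k$-endomorphism of the algebraic extension $k_\phi$---is a detail the paper leaves to the reader, and you handle it correctly.
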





\section{Valuations of depth zero}\label{secDepth0}

Let us fix an order-preserving embedding of abelian ordered groups:
\begin{equation}\label{gembb}
\gq\hooklongrightarrow \left(\Z\times \gq\right)_{\op{lex}},\qquad \ga\longmapsto (0,\ga),
\end{equation}
where in $\left(\Z\times \gq\right)_{\op{lex}}$ we consider the lexicographical order.

Consider the following valuation on $\kx$ extending $v$:
$$
\minf\colon \kx\lra \left(\Z\times \g\right)_{\op{lex}}\cup\{\infty\},\qquad 
f\longmapsto \left(-\deg(f),v\left(\lc(f)\right)\right)
$$
where $\lc(f)\in K^*$ is the leading coefficient of a non-zero polynomial $f$.

Since $\g_{\minf}=\left(\Z\times \g\right)_{\op{lex}}$, the extension $\minf/v$ is incommensurable.


The following (trivial) observation is left to the reader.

\begin{lemma}\label{hminf}
Let $f,g\in\kx$ be non-zero polynomials.
\begin{enumerate}
\item[(1)] $f\sim_{\minf}\lc(f)x^{\deg(f)}$.
\item[(2)] $f\sim_{\minf}g\ \iff\ \deg(f)=\deg(g),\quad \lc(f)\sim_v\lc(g)$.
\end{enumerate}
\end{lemma}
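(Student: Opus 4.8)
The plan is to unwind the definition of $\sim_{\minf}$ and let the lexicographic order on $\left(\Z\times\g\right)_{\op{lex}}$ do the work. First I would record that for nonzero $f,g$ the relation $f\sim_{\minf}g$ means $H_{\minf}(f)=H_{\minf}(g)$, and — since $H_{\minf}(g)\ne0$ for $g\ne0$ (because $g\notin\pset_{\minf(g)}^{+}$, as $\minf(g)\not>\minf(g)$) — this holds exactly when $\minf(f)=\minf(g)=:\beta$ and $\minf(f-g)>\beta$, with the usual convention $\minf(0)=\infty$. Everything then reduces to comparing first and second coordinates in $\left(\Z\times\g\right)_{\op{lex}}$.

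For (1), I would write $n=\deg(f)$, so that $\minf(f)=(-n,v(\lc(f)))=\minf(\lc(f)x^{n})$, and put $h=f-\lc(f)x^{n}$. If $h=0$ there is nothing to prove; otherwise $\deg(h)<n$, so the first coordinate of $\minf(h)$ exceeds $-n$ and hence $\minf(h)>(-n,v(\lc(f)))$ in the lexicographic order, i.e. $h\in\pset_{\minf(f)}^{+}$. This is precisely $f\sim_{\minf}\lc(f)x^{n}$.

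For (2), the implication $\Rightarrow$ comes from comparing the two coordinates of $\minf(f)=\minf(g)$: the first gives $\deg(f)=\deg(g)=:n$, the second gives $v(\lc(f))=v(\lc(g))$. Applying (1) twice, $\lc(f)x^{n}\sim_{\minf}f\sim_{\minf}g\sim_{\minf}\lc(g)x^{n}$, so (the case $\lc(f)=\lc(g)$ being trivial) $\minf\!\big((\lc(f)-\lc(g))x^{n}\big)>(-n,v(\lc(f)))$; since the left-hand side has first coordinate $-n$, this forces $v(\lc(f)-\lc(g))>v(\lc(f))$, i.e. $\lc(f)\sim_{v}\lc(g)$. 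For $\Leftarrow$, by (1) it suffices to see $\lc(f)x^{n}\sim_{\minf}\lc(g)x^{n}$ when $\deg(f)=\deg(g)=n$ and $\lc(f)\sim_{v}\lc(g)$: both monomials have $\minf$-value $(-n,v(\lc(f)))$, while $\lc(f)\sim_{v}\lc(g)$ gives $v(\lc(f)-\lc(g))>v(\lc(f))$, whence $\minf\!\big((\lc(f)-\lc(g))x^{n}\big)=(-n,v(\lc(f)-\lc(g)))>(-n,v(\lc(f)))$, which is the desired equivalence.

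I expect no genuine obstacle here — the author is right to call this trivial. The only things to watch are the bookkeeping around the zero polynomial together with the convention $\minf(0)=\infty$, and the elementary but essential point that in $\left(\Z\times\g\right)_{\op{lex}}$ a strict increase in the first (degree) coordinate dominates any behaviour in the second, while the second coordinate is only consulted once the degrees agree.
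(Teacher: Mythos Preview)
Your proof is correct: unwinding the definition of $H_{\minf}$ and using the lexicographic order on $\left(\Z\times\g\right)_{\op{lex}}$ is exactly the right way to see this, and every step checks out (the only cosmetic omission is the trivial case $\lc(f)=\lc(g)$ in the $\Leftarrow$ direction of (2), which you handle explicitly in the $\Rightarrow$ direction). The paper itself gives no proof---it declares the lemma ``trivial'' and leaves it to the reader---so there is nothing further to compare.
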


The following result is an immediate consequence of Lemma \ref{hminf}.

\begin{corollary}\label{kpminf}
Let $y$ be an indeterminate, to which we assign degree $(-1,0)$. 

There is an  isomorphism of graded algebras 
$$\gv[y] \ \iso \ \ggminf,\qquad y  \longmapsto  H_{\minf}(x) .
$$
It induces an isomorphism $k\simeq\Delta_{\minf}\simeq k_{\minf}$ of $k$-algebras.

Moreover, $\op{KP}(\minf)=\left\{x+a\mid a\in K\right\}$ and all key polynomials are $\minf$-equivalent.
\end{corollary}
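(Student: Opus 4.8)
The plan is to deduce all three assertions directly from Lemma~\ref{hminf}, which describes the initial terms under $\minf$ completely, together with the basic fact (already noted) that graded algebras of valuations are integral domains. For the graded isomorphism: since $y$ is assigned degree $(-1,0)$, the ring $\gv[y]$ is graded by the lex-ordered group $\Z\times\g=\g_{\minf}$, its homogeneous component in degree $(-n,\ga)$ being $y^{n}$ times the degree-$\ga$ component of $\gv$ when $n\ge0$, and $0$ when $n<0$. I would first introduce the homomorphism of graded $k$-algebras $\gv[y]\to\ggminf$ determined by $y\mapsto H_{\minf}(x)$ and $H_v(a)\mapsto H_{\minf}(a)$ for $a\in K^{*}$ (well defined because $\minf$ restricts to $v$ on $K$), and then prove it bijective by comparing homogeneous components. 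By Lemma~\ref{hminf}(1), every nonzero $f\in\kx$ satisfies $H_{\minf}(f)=H_v(\lc f)\,H_{\minf}(x)^{\deg f}$, and $\minf(f)=(-\deg f,v(\lc f))$ has first coordinate $\le0$; hence, for fixed $(-n,\ga)$, the rule $f\mapsto H_v(\lc f)$ when $\deg f=n$ and $f\mapsto0$ when $\deg f<n$ identifies the degree-$(-n,\ga)$ component of $\ggminf$ with the degree-$\ga$ component of $\gv$ for $n\ge0$, and with $0$ for $n<0$. This matches $\gv[y]$ component by component, so the map is an isomorphism of graded $k$-algebras.

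For the residual data, reading off the degree-$(0,0)$ component of the previous step gives $\Delta_{\minf}\simeq\{a\in K\mid v(a)\ge0\}/\{a\in K\mid v(a)>0\}=\oo_v/\m_v=k$, so the canonical inclusion $k\hookrightarrow\Delta_{\minf}$ is an isomorphism. It then remains to check that $\minf$, regarded as a valuation on $K(x)$, has residue field $k$, i.e.\ that $\Delta_{\minf}\twoheadrightarrow k_{\minf}$ is onto. Given $h\in K(x)$ with $\minf(h)=0$, I would write $h=f/g$ with $f,g\in\kx\setminus\{0\}$; then $\minf(f)=\minf(g)$ forces $\deg f=\deg g$ and $v(\lc f)=v(\lc g)$, so $f-(\lc f/\lc g)\,g$ has strictly smaller degree and therefore $\minf\bigl(f-(\lc f/\lc g)\,g\bigr)>\minf(f)$; dividing by $g$ gives $\minf\bigl(h-\lc f/\lc g\bigr)>0$, so $h$ and $\lc f/\lc g$ have the same residue in $k_{\minf}$, and since $v(\lc f/\lc g)=0$ this residue lies in $k$. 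Hence $k\simeq\Delta_{\minf}\simeq k_{\minf}$.

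Finally, for the key polynomials, transport everything through the isomorphism of the first step, so that $\ggminf$ becomes $\gv[y]$ and $H_{\minf}(\phi)$ becomes $y^{\deg\phi}$ for every monic $\phi\in\kx$, again by Lemma~\ref{hminf}(1). A monic $\phi$ is $\minf$-irreducible exactly when $y^{\deg\phi}\gv[y]$ is a nonzero prime ideal of the domain $\gv[y]$, which forces $\deg\phi=1$; and $y\gv[y]$ is prime since $\gv[y]/y\gv[y]\simeq\gv$ is a domain. A monic linear polynomial $x+a$ is also $\minf$-minimal: the only nonzero polynomials of degree $<1$ are the constants $c\in K^{*}$, and $H_{\minf}(x+a)=H_{\minf}(x)=y$ does not divide the unit $H_v(c)$ in $\gv[y]$. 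Thus $\op{KP}(\minf)=\{x+a\mid a\in K\}$, and $x+a\sim_{\minf}x$ for every $a\in K$ by Lemma~\ref{hminf}, so all key polynomials for $\minf$ are mutually $\minf$-equivalent. No genuine obstacle arises in any of this, which is why the statement is flagged as an immediate consequence of Lemma~\ref{hminf}; the only steps requiring a little care are the component-by-component identification used to build the graded isomorphism and the elementary verification that $\minf$ produces no residual extension over $v$, which gives $k_{\minf}=k$.
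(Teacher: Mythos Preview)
Your proof is correct and follows exactly the approach the paper indicates: the corollary is stated there as an immediate consequence of Lemma~\ref{hminf}, with no explicit argument given, and you have simply spelled out the details. The component-by-component identification for the graded isomorphism, the residue-field computation via $h=f/g$, and the analysis of key polynomials through $H_{\minf}(\phi)=y^{\deg\phi}$ are all straightforward unpackings of Lemma~\ref{hminf}(1)--(2), which is precisely what the paper intends.
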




\begin{definition}
A valuation $\mu$ on $\kx$ is said to have \emph{depth zero} if it  is commensurable over $v$, and it is equivalent to some augmentation of $\minf$.  
\end{definition}

By Corollary \ref{kpminf}, a valuation of depth zero must be of the form:
$$
\mu_{x+a,\ga}=[\minf;\,x+a,\,(0,\ga)],\qquad a\in K,\quad\ga\in\gq.
$$

It is easy to check under what conditions two of these augmentations coincide: 
$$
\mu_{x+a,\ga}=\mu_{x+b,\ga_*}\ \iff\  v(a-b)\ge\ga=\ga_*.
$$

By Propositon \ref{extension}, the value group of these valuations is:
$$
\g_{\mu_{x+a,\ga}}=\gen{\{0\}\times \g,\,(0,\ga)}.
$$

By dropping the first (null) coordinate, we obtain equivalent valuations with values in $\gq$, thus  belonging to the space $\V$. We denote them with the same symbol $\mu_{x+a,\ga}$.

Their value group is $\g_{\mu_{x+a,\ga}}=\gen{\g,\,\ga}$ and they act as follows:
$$
\mu_{x+a,\ga}\colon \ \sum\nolimits_{0\le s}a_s(x+a)^s\ \longmapsto\ \mn\left\{v(a_s)+s\ga\mid0\le s\right\}.
$$

For instance, the depth-zero valuation $\mu_{x,0}$ is known as \emph{Gauss' valuation}.

\subsection*{Absolute minimality of $\minf$}
Thanks to the  embedding (\ref{gembb}),  we may compare the values of $\minf$ with those of the valuations in $\V$. We clearly have
\begin{equation}\label{minfmin}
\minf(f)\le \mu(f),\qquad \forall\,f\in\kx,\quad\forall\,\mu\in\V.
\end{equation}

In a certain sense, this property characterizes $\minf$.

\begin{theorem}\label{absmin}
Let  $\eta\colon \kx\to \g'\cup \left\{\infty\right\}$ be a valuation extending $v$, with respect to some order-preserving embedding $\iota\colon \g\hookrightarrow\g'$.

Then, after embedding $\;\gq\hookrightarrow\gq'$ by \ $\iota\otimes\Q$, the following conditions are equivalent.

\begin{enumerate}
\item[(1)] $\eta(f)\le \mu(f)$, \ for all $\,f\in\kx$ and all $\,\mu\in\V$.
\item[(2)] $\eta(x)< \ga$, \ for all  $\,\ga\in\gq$. 
\item[(3)] The valuation $\eta$ is equivalent to $\minf$.
\end{enumerate}
\end{theorem}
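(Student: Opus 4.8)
The plan is to prove the cycle of implications $(1)\Rightarrow(2)\Rightarrow(3)\Rightarrow(1)$; the last implication is exactly inequality (\ref{minfmin}), so no work is needed there once we have identified $\eta$ with $\minf$. For $(1)\Rightarrow(2)$, I would argue by contraposition: if $\eta(x)\ge\ga$ for some $\ga\in\gq$, then I must produce a valuation $\mu\in\V$ with $\mu(x)<\eta(x)$, say on the value $f=x$ itself. The Gauss-type valuation $\mu_{x,\be}$ has $\mu_{x,\be}(x)=\min\{0,\be\}$, and choosing $\be$ sufficiently negative in $\gq$ (e.g. $\be<\iota^{-1}$ of some lower bound for $\eta(x)$, using that $\g$ is non-trivial and hence $\gq$ is cofinal downward) gives $\mu_{x,\be}(x)=\be<\eta(x)$, contradicting (1). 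One must be slightly careful that the comparison takes place after the embedding $\iota\otimes\Q\colon\gq\hookrightarrow\gq'$, so $\eta(x)$ is being compared with the image of $\be$; but since $\g$ embeds cofinally-downward in $\gq$ and $\iota$ is order-preserving, such a $\be$ exists.

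The substantive step is $(2)\Rightarrow(3)$. Assume $\eta(x)<\ga$ for every $\ga\in\gq$ (again after the fixed embedding). First I would pin down $\eta$ on linear polynomials $x+a$: since $v(a)\in\g\subset\gq$, hypothesis (2) gives $\eta(x)<v(a)=\eta(a)$, hence by the ultrametric equality $\eta(x+a)=\eta(x)$ for all $a\in K$. Next, for a general non-zero $f=\sum_{0\le s}c_sx^s\in\kx$ of degree $n$, I claim $\eta(f)=\eta(c_nx^n)=v(c_n)+n\,\eta(x)$ and that this minimum is strict, i.e. $\eta(c_sx^s)>\eta(c_nx^n)$ for $s<n$. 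Indeed $\eta(c_sx^s)-\eta(c_nx^n)=\big(v(c_s)-v(c_n)\big)+(s-n)\eta(x)$; here $v(c_s)-v(c_n)\in\g\subset\gq$ while $(n-s)\eta(x)$ is, by (2), strictly smaller than every element of $\gq$ (as $(n-s)\ga$ ranges over all of $\gq$ when $\ga$ does), so the difference is strictly positive. Hence $\eta(f)=v(\lc(f))+\deg(f)\,\eta(x)$ with leading term strictly dominant.

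This formula already identifies $\eta$ up to equivalence with $\minf$: define an order-preserving embedding $j\colon\g_{\minf}=(\Z\times\g)_{\lx}\hookrightarrow\g'$ by $(m,\de)\mapsto m\,\eta(x)+\iota(\de)$; this is order-preserving precisely because $\eta(x)$ is smaller than every element of $\iota(\gq)$, which forces the lexicographic order on $\Z\times\g$ (the $\Z$-coordinate dominates). Then the displayed formula says exactly $\eta(f)=j\big(\minf(f)\big)$ for all non-zero $f$, and $\eta(0)=\infty=j(\infty)$, so $\eta$ and $\minf$ are equivalent valuations. Finally $(3)\Rightarrow(1)$ is (\ref{minfmin}), completing the cycle. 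The only real obstacle is bookkeeping with the two embeddings $\iota$ and $j$ and making sure every inequality is read in the common group $\gq'$ or $\g'$; the valuation-theoretic content is just the ultrametric inequality applied to $\phi$-expansions in the powers of $x$.
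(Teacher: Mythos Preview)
Your approach mirrors the paper's: the same cycle $(1)\Rightarrow(2)\Rightarrow(3)\Rightarrow(1)$, the same use of the depth-zero valuations $\mu_{x,\ga}$ for the first implication, the same computation $\eta(f)=v(\lc(f))+\deg(f)\,\eta(x)$ via strict dominance of the leading term for the second, and the same appeal to (\ref{minfmin}) for the third.

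There is, however, a genuine sign error in your map $j$. With $j(m,\de)=m\,\eta(x)+\iota(\de)$ and $\eta(x)$ lying below all of $\iota(\gq)$, increasing $m$ \emph{decreases} $j(m,\de)$, so your $j$ is order-\emph{reversing} on $(\Z\times\g)_{\lx}$, not order-preserving. Worse, since $\minf(f)=\bigl(-\deg(f),\,v(\lc(f))\bigr)$, your formula gives $j(\minf(f))=-\deg(f)\,\eta(x)+v(\lc(f))$, which is not the $\eta(f)$ you computed. The correct definition (and the one the paper uses) is $j(m,\de)=\iota(\de)-m\,\eta(x)$; then $j(\minf(f))=v(\lc(f))+\deg(f)\,\eta(x)=\eta(f)$, and the same ``$\eta(x)$ below $\gq$'' hypothesis now makes $j$ order-preserving.

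Two smaller points. First, $\mu_{x,\be}(x)=\be$, not $\min\{0,\be\}$; your argument survives because you take $\be$ negative anyway. Second, $(3)\Rightarrow(1)$ is not quite literally (\ref{minfmin}): one must check that any equivalence $j$ between $\eta$ and $\minf$ carries the embedded copy of $\gq$ inside $(\Z\times\gq)_{\lx}$ onto $\iota\otimes\Q(\gq)$ inside $\gq'$, so that the comparison in (1) is the image under $j$ of the comparison in (\ref{minfmin}). This follows from $\eta|_K=v=\minf|_K$, and the paper makes the step explicit.
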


\begin{proof}
Suppose condition (1) is satisfied.
Then, our valuation $\eta$ satisfies:
$$
\eta(x)\le \mu_{x,\ga}(x)=\ga,\qquad \forall\,\ga\in\gq.
$$
Since $v$ is non-trivial, the group $\gq$ has no minimal element, and the last inequality must be strict. This proves (2).\e

Now, suppose that condition (2) is satisfied. We deduce immediately
\begin{equation}\label{xm}
\eta(x^m)< \ga< \eta(x^{-m}),\qquad \forall\,\ga\in\gq,\ \forall\,m\in\Z_{>0}. 
\end{equation}

This property implies:
\begin{equation}\label{simeta}
f\sim_{\eta} \lc(f)\,x^{\deg(f)},\quad \forall\,f\in\kx,\ f\ne0. 
\end{equation}
In fact, any two non-zero monomials $ax^m$, $bx^n$ of different degree, have different $\eta$-value, and the smallest value is that of the monomial of maximal degree:
\begin{equation}\label{ordering}
n<m\imp \eta(x^{m-n})<v(b/a)\imp \eta(ax^m)<\eta(bx^n).
\end{equation}

Hence, we get an order-preserving group isomorphism
$$
j\colon\left(\Z\times \g\right)_{\op{lex}}\lra \g_\eta,\qquad (m,\al)\longmapsto \al-\eta(x^m).
$$
In fact, (\ref{simeta}) shows that $j$ is onto. And, if $(m,\al)\in\k(j)$, then   (\ref{xm}) implies
$$
\eta(x^m)=\al\in\gq\  \imp\ m=0\ \imp\ \al=0.
$$
Thus $j$ is a group isomorphism.
Also, $j$ preserves the ordering:
$$(n,v(a))\le (m,v(b)) \ \imp\ j(n,v(a))=\eta(ax^{-n})\le \eta(bx^{-m})=j(m,v(b)).
$$
If $n<m$, this inequality follows from (\ref{ordering}). If $n=m$, it follows from $\eta_{\mid K}=v$. 

This proves (3), because the following diagram commutes:
\begin{equation}\label{jiso}
\as{1.3}
\begin{array}{ccc}
\left(\Z\times \g\right)_{\op{lex}}&\stackrel{j}\iso&\quad\g_\eta\\
\quad\lower.4ex\hbox{\scriptsize$\minf$}\!\!\nwarrow&&\!\!\!\!\nearrow\lower.4ex\hbox{\scriptsize$\eta$}\\
&K(x)^*&
\end{array}
\end{equation}
In fact, for any non-zero $f\in\kx$, equation  (\ref{simeta}) shows that
$$
\eta(f)=\eta\left(\lc(f)\,x^{\deg(f)}\right)=j\left(-\deg(f),v\left(\lc(f)\right)\right)=j(\minf(f)).
$$

Finally, suppose that $\eta$ and $\minf$ are equivalent. That is, there exists an order-preserving isomorphism $j$ such that diagram (\ref{jiso}) commutes.

Since $\eta_{\mid K}=v={\minf}_{\mid K}$, the isomorphism $j\otimes\Q$ maps $\{0\}\times\gq$ into $\iota(\gq)$. Hence, by applying $j$ to the inequalities in (\ref{minfmin}), we obtain the inequalities in item (1).
\end{proof}




\section{Inductive valuations}\label{secIndVals}

A valuation $\mu$ on $\kx$ extending $v$ is said to be \emph{inductive} if it is attained after a finite number of augmentation steps starting with the minimal valuation $\minf$:
\begin{equation}\label{depth}
\minf\stackrel{\phi_0,\ga_0}\lra\  \mu_0\ \stackrel{\phi_1,\ga_1}\lra\  \mu_1\ \stackrel{\phi_2,\ga_2}\lra\ \cdots
\ \stackrel{\phi_{r-1},\ga_{r-1}}\lra\ \mu_{r-1} 
\ \stackrel{\phi_{r},\ga_{r}}\lra\ \mu_{r}=\mu,
\end{equation}
with values $\ga_0,\dots,\ga_r\in\gq$, and intermediate valuations  
$$\mu_0=\mu_{\phi_0,\ga_0}, \qquad\mu_i=[\mu_{i-1};\phi_i,\ga_i], \quad 1\leq i\leq r.
$$

Inductive valuations are commensurable over $v$, because we do not consider $\minf$ to be an inductive valuation. 
The integer $r\ge0$ is the \emph{length} of the chain (\ref{depth}).  \e

By Proposition \ref{extension}, every $\phi_{i}$ is a key polynomial for $\mu_i$ of minimal degree. 

Since every $\phi_{i+1}$ is $\mu_i$-minimal, \cite[Prop. 3.7]{KeyPol} shows that
$$
1=\deg(\phi_0)\mid \deg(\phi_1)\mid\cdots\mid\deg(\phi_{r-1})\mid\deg(\phi_r).
$$

\begin{lemma}\label{stable}
For a chain of augmentations as in {\rm (\ref{depth})}, take $f\in \kx$ such that $\phi_i\nmid_{\mu_{i-1}}f$ for some $1\le i\le r$. Then, 
$\mu_{i-1}(f)=\mu_{i}(f)=\cdots=\mu_r(f)$. 
\end{lemma}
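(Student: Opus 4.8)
The plan is to reduce the whole statement to the single augmentation step $\mu_{i-1}\to\mu_i$ and then propagate it along the chain. The key point is Proposition \ref{extension}(a): if $\mu'=[\mu;\phi,\ga]$ and $\phi\nmid_\mu f$, then $\mu(f)=\mu'(f)$. So first I would apply this with $\mu=\mu_{i-1}$, $\mu'=\mu_i$, $\phi=\phi_i$ to get $\mu_{i-1}(f)=\mu_i(f)$ directly from the hypothesis $\phi_i\nmid_{\mu_{i-1}}f$.

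The remaining work is to show $\mu_i(f)=\mu_{i+1}(f)=\cdots=\mu_r(f)$, i.e. that once the value stabilizes at step $i$, it stays constant. By induction it suffices to show: if $\phi_i\nmid_{\mu_{i-1}}f$, then $\phi_{i+1}\nmid_{\mu_i}f$ (so that Proposition \ref{extension}(a) applies again at the next step, giving $\mu_i(f)=\mu_{i+1}(f)$, and we may then repeat). Here I would again invoke Proposition \ref{extension}(a): since $\phi_i\nmid_{\mu_{i-1}}f$, the initial term $H_{\mu_{i-1}}(\phi_i)$ does not divide $H_{\mu_{i-1}}(f)$, and the proposition tells us that $H_{\mu_i}(f)$ is a \emph{unit} in $\gg_{\mu_i}$. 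A unit is not divisible by any non-unit; since $\phi_{i+1}$ is a key polynomial for $\mu_i$, $H_{\mu_i}(\phi_{i+1})$ generates a non-zero prime ideal and in particular is not a unit, hence cannot divide the unit $H_{\mu_i}(f)$. Therefore $\phi_{i+1}\nmid_{\mu_i}f$, which is exactly what is needed to restart the argument at level $i+1$.

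Assembling these: from $\phi_i\nmid_{\mu_{i-1}}f$ we get $\mu_{i-1}(f)=\mu_i(f)$ and $\phi_{i+1}\nmid_{\mu_i}f$; the latter gives $\mu_i(f)=\mu_{i+1}(f)$ and $\phi_{i+2}\nmid_{\mu_{i+1}}f$; continuing inductively up to $\mu_r$ yields the full chain of equalities $\mu_{i-1}(f)=\mu_i(f)=\cdots=\mu_r(f)$. One should note $f$ is automatically non-zero (the statement $\phi_i\nmid_{\mu_{i-1}}f$ presupposes $H_{\mu_{i-1}}(f)\ne0$), so all initial terms involved are non-zero and the unit/non-unit dichotomy is legitimate.

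I do not expect a serious obstacle here; the proof is essentially a bookkeeping exercise built on Proposition \ref{extension}. The one subtlety worth stating carefully is the step ``unit is not divisible by a non-unit'' in the graded domain $\gg_{\mu_i}$: this needs that $\gg_{\mu_i}$ is an integral domain (true, as the graded algebra of a valuation) and that $H_{\mu_i}(\phi_{i+1})$ is a genuine non-unit, which follows because $\phi_{i+1}\in\op{KP}(\mu_i)$ means $H_{\mu_i}(\phi_{i+1})\gg_{\mu_i}$ is a non-zero prime ideal, hence proper. Everything else is a direct chaining of the cited result.
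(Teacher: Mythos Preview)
Your proposal is correct and follows essentially the same approach as the paper's proof: apply Proposition~\ref{extension}(a) to get $\mu_{i-1}(f)=\mu_i(f)$ and that $H_{\mu_i}(f)$ is a unit, then observe that the prime $H_{\mu_i}(\phi_{i+1})$ cannot divide a unit, so the argument iterates. The paper's version is just a terser statement of exactly this reasoning.
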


\begin{proof}
By Proposition \ref{extension}, $\mu_{i-1}(f)=\mu_{i}(f)$ and $H_{\mu_i}(f)$ is a unit. Thus, $\phi_{i+1}\nmid_{\mu_{i}}f$ since $H_{\mu_i}(\phi_{i+1})$ is a prime element in $\gg_{\mu_i}$; hence, the argument may be iterated.
\end{proof}

\subsection{MacLane chains of valuations}\label{subsecML}
Let us impose a technical condition on the augmentations to ensure that the value groups $\g_{\mu_0},\dots,\g_{\mu_r}$ form a chain.

For a chain as in (\ref{depth}) and any index $0\le i<r$, \cite[Prop. 6.6]{KeyPol} shows that:
$$\phi_{i+1}\mid_{\mu_{i}}\phi_{i}  \quad\sii\quad
\phi_{i+1}\sim_{\mu_i}\phi_i \quad\imp\quad\deg(\phi_i)=\deg(\phi_{i+1}).
$$

\begin{definition}
A chain of augmented valuations as in (\ref{depth}) is called a \emph{MacLane chain} if it satisfies $\phi_{i+1}\nmid_{\mu_{i}}\phi_{i}$ for all $0\le i<r$.

If $\deg(\phi_0)<\cdots<\deg(\phi_r)$, we say that it is an \emph{optimal} MacLane chain.
\end{definition}

Obviously, the truncation of a MacLane chain at the $i$-th node 
is a MacLane chain of the intermediate valuation $\mu_i$. 

As an immediate application of Lemma \ref{stable}, in a MacLane chain one has:
$$
\mu(\phi_i)=\mu_i(\phi_i)=\ga_i,\qquad 0\le i\le r.
$$

All inductive valuations admit optimal MacLane chains, as the next result shows.

\begin{lemma}\cite[Sec. 1.2]{Vaq}\label{existence}
Consider a chain of two augmented valuations
$$
\mu\ \stackrel{\phi,\ga}\lra\  \mu'\ \stackrel{\phi_*,\ga_*}\lra\ \mu_*
$$
with \,$\deg(\phi_*)=\deg(\phi)$. Then, $\phi_*$ is a key polynomial for $\mu$, and $\mu_*=[\mu;\phi_*,\ga_*]$.
\end{lemma}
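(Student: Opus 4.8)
The plan is to analyze the two-step chain $\mu\xrightarrow{\phi,\ga}\mu'\xrightarrow{\phi_*,\ga_*}\mu_*$ under the hypothesis $\deg(\phi_*)=\deg(\phi)=:m$, and show both that $\phi_*$ is a key polynomial for $\mu$ and that the single augmentation $[\mu;\phi_*,\ga_*]$ reproduces $\mu_*$. First I would dispose of the degenerate possibility: by Proposition \ref{extension}(c), $\phi$ is a key polynomial for $\mu'$ of minimal degree, and by hypothesis $\phi_*$ has that same minimal degree $m$; so $\phi_*$ is $\mu'$-minimal and $\mu'$-irreducible. The discussion preceding the definition of a MacLane chain (citing \cite[Prop. 6.6]{KeyPol}) tells us that $\phi_*\mid_{\mu'}\phi$ is equivalent to $\phi_*\sim_{\mu'}\phi$; so either $\phi_*\sim_{\mu'}\phi$, or $\phi_*\nmid_{\mu'}\phi$. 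In the first case I would argue directly that replacing $\phi$ by the $\mu'$-equivalent $\phi_*$ changes nothing essential, reducing to the situation $\phi_*=\phi$ where the statement is immediate from Lemma \ref{stable} and the definition of augmentation. So the substantial case is $\phi_*\nmid_{\mu'}\phi$.

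In that case I would use Proposition \ref{extension}(a): since $\phi_*\nmid_{\mu'}\phi$... wait, that's backwards — I need $\phi\nmid_{\mu}$ something. Let me instead run the argument through the graded-algebra homomorphism. The key observation is that $\phi_*$ is monic of degree $m=\deg(\phi)$, so its $\phi$-expansion has the shape $\phi_* = \phi + a$ with $\deg(a)<m$, i.e. $\mu$-minimality of $\phi$ (Lemma \ref{minimal0}) forces $\mu(\phi_*)\ge\min\{\mu(\phi),\mu(a)\}$ with equality. The real content is to show $\phi_*$ is $\mu$-irreducible, i.e. $H_\mu(\phi_*)\,\ggm$ is a nonzero prime ideal. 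For this I would exploit the canonical homomorphism $\ggm\to\gg_{\mu'}$ whose kernel is $H_\mu(\phi)\,\ggm$ (Proposition \ref{extension}(b)): under this map $H_\mu(\phi_*)$ goes to $H_{\mu'}(\phi_*)$ if $\mu(\phi_*)=\mu'(\phi_*)$, and $H_{\mu'}(\phi_*)$ generates a nonzero prime ideal in $\gg_{\mu'}$ since $\phi_*\in\op{KP}(\mu')$. One then lifts primality back across the quotient $\ggm\twoheadrightarrow\gg_{\mu'}$, using that $\ggm$ is an integral domain and that the only primes lying over a given prime of the quotient are controlled by $H_\mu(\phi)$; the degree equality $\deg\phi_*=\deg\phi$ is exactly what rules out the "extra" prime $H_\mu(\phi)$ being a factor of $H_\mu(\phi_*)$, because a proper factorization would contradict $\mu$-minimality of $\phi_*$ at degree $m$. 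Granting $\phi_*$ is $\mu$-minimal (from the $\phi$-expansion shape via Lemma \ref{minimal0}) and $\mu$-irreducible, $\phi_*\in\kpm$, and since $\mu(\phi_*)=\ga_*>\mu'(\phi)\ge\mu(\phi)$... I would verify $\mu(\phi_*)<\ga_*$ is not needed; rather I would check $\ga_*$ is an admissible augmentation value for $\mu$ with respect to $\phi_*$, namely $\mu(\phi_*)<\ga_*$, which holds because $\mu_*(\phi_*)=\ga_*>\mu'(\phi_*)\ge\mu(\phi_*)$, the middle inequality being strict by the definition of augmentation applied to $\mu'\to\mu_*$.

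Having produced the candidate $[\mu;\phi_*,\ga_*]$, I would finish by showing it equals $\mu_*$ as a valuation on $\kx$, i.e. they agree on every $f$. Both valuations extend $v$ and are determined by their values on $\phi_*$-expansions; since $\mu_*=[\mu';\phi_*,\ga_*]$ and $[\mu;\phi_*,\ga_*]$ assign to $f=\sum_s b_s\phi_*^s$ (with $\deg b_s<m$) the values $\min_s\{\mu'(b_s)+s\ga_*\}$ and $\min_s\{\mu(b_s)+s\ga_*\}$ respectively, it suffices to prove $\mu(b_s)=\mu'(b_s)$ for every coefficient $b_s$ of degree $<m$. That is exactly Proposition \ref{extension}(a): any nonzero $b$ with $\deg b<m=\deg\phi$ is not $\mu$-divisible by $\phi$ (by $\mu$-minimality of $\phi$), hence $\mu(b)=\mu'(b)$. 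This closes the argument.

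The main obstacle I anticipate is the primality transfer in the middle step — rigorously deducing $\mu$-irreducibility of $\phi_*$ from $\mu'$-irreducibility across the graded quotient $\ggm\twoheadrightarrow\gg_{\mu'}$, while using the degree hypothesis to exclude $H_\mu(\phi)$ itself appearing as an irreducible factor of $H_\mu(\phi_*)$. Everything else reduces to bookkeeping with $\phi$- and $\phi_*$-expansions and the already-cited Propositions \ref{extension}, \ref{minimal0} and the $\mu'$-minimality/irreducibility of $\phi_*$ inherited from having minimal degree in $\op{KP}(\mu')$.
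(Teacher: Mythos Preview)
The paper does not supply a proof of this lemma; it is stated with a bare citation to \cite[Sec.~1.2]{Vaq}. So there is no in-paper argument to compare against, only the outline you would find in Vaqui\'e.

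Your proof of the equality $\mu_*=[\mu;\phi_*,\ga_*]$ is correct and is the clean half of the argument: since every coefficient $b_s$ of the $\phi_*$-expansion has $\deg b_s<m=\deg\phi$, the $\mu$-minimality of $\phi$ gives $\phi\nmid_\mu b_s$, whence $\mu(b_s)=\mu'(b_s)$ by Proposition~\ref{extension}(a), and the two augmentation formulas agree termwise.

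The gap you flag in the $\mu$-irreducibility step is real, and in fact the graded-algebra ``lift primality across $\ggm\twoheadrightarrow\gg_{\mu'}$'' does not work as stated: in a domain $R$ with a prime $p$, an element $q\notin(p)$ whose image in $R/(p)$ is prime need not be prime in $R$ (take $R=k[x,y]$, $p=x$, $q=(1+x)y$). But the whole detour is unnecessary, because a much simpler observation settles the first claim. Write $\phi_*=\phi+a$ with $\deg a<m$. If $\mu(a)\le\mu(\phi)$, then $\mu(a)<\ga=\mu'(\phi)$, so $\mu'(\phi_*)=\min\{\ga,\mu(a)\}=\mu(a)=\mu'(a)$ and $\mu'(\phi_*-a)=\mu'(\phi)=\ga>\mu'(\phi_*)$; hence $\phi_*\sim_{\mu'}a$ with $\deg a<m$, contradicting the $\mu'$-minimality of $\phi_*\in\op{KP}(\mu')$. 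Therefore $\mu(a)>\mu(\phi)$, which forces $\phi_*\sim_\mu\phi$. Since being a key polynomial depends only on $H_\mu(\phi_*)=H_\mu(\phi)$, this gives $\phi_*\in\kpm$ at once, and $\mu(\phi_*)=\mu(\phi)<\ga_*$ so the augmentation $[\mu;\phi_*,\ga_*]$ is well-defined. Your dichotomy at the $\mu'$-level ($\phi_*\sim_{\mu'}\phi$ versus $\phi_*\nmid_{\mu'}\phi$) is correct but is the wrong level to split on: both branches collapse to $\phi_*\sim_\mu\phi$, and neither requires any transfer of primality through the graded quotient.
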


Thus, the length of a MacLane chain of $\mu$ is not an intrinsic datum of $\mu$. 
However, there is a strong unicity statement if we consider only optimal MacLane chains. 

The next result follows from \cite[Prop. 3.6]{ResidualIdeals}.
Although that paper deals with rank one valuations, the arguments used in the proof are valid for arbitrary valuations.

\begin{proposition}\label{unicity}
Consider an optimal MacLane chain as in {\rm (\ref{depth})} and another optimal MacLane chain
$$
\minf\ \stackrel{\phi^*_0,\ga^*_0}\lra\  \mu^*_0\ \stackrel{\phi^*_1,\ga^*_1}\lra\ \cdots
\ \lra\ \mu^*_{t-1} 
\ \stackrel{\phi^*_{t},\ga^*_{t}}\lra\ \mu^*_{t}=\mu^*.
$$
Then, $\mu=\mu^*$ if and only if $r=t$ and
$$
\deg(\phi_i)=\deg(\phi^*_i), \quad \mu_{i-1}(\phi_i-\phi^*_i)\ge\ga_i=\ga^*_i \ \mbox{ for all }\ 0\le i \le r.
$$
In this case, we also have $\mu_i=\mu^*_i$ for all $0\le i \le r$. 
\end{proposition}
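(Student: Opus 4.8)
The proof of Proposition \ref{unicity} cited from \cite[Prop. 3.6]{ResidualIdeals} is not reproduced here, but I sketch the argument one would run.

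\textbf{Strategy.} The statement has an easy direction and a hard direction. For the ``if'' direction, assume $r=t$, $\deg(\phi_i)=\deg(\phi^*_i)$, $\ga_i=\ga^*_i$, and $\mu_{i-1}(\phi_i-\phi^*_i)\ge\ga_i$ for all $i$. I would prove $\mu_i=\mu^*_i$ by induction on $i$, the base case $\mu_{-\infty}=\mu_{-\infty}$ being trivial. Given $\mu_{i-1}=\mu^*_{i-1}$, I want $[\mu_{i-1};\phi_i,\ga_i]=[\mu_{i-1};\phi^*_i,\ga_i]$. Since $\deg(\phi_i)=\deg(\phi^*_i)$ and $\mu_{i-1}(\phi_i-\phi^*_i)\ge\ga_i=\mu_{i-1}(\phi_i)$ (using $\mu_{i-1}(\phi_i)=\ga_i$ in a MacLane chain), the polynomial $\phi^*_i$ is obtained from $\phi_i$ by adding something of $\mu_{i-1}$-value $\ge\ga_i$; a standard lemma on augmented valuations (the value $\ga_i$ being the augmentation value, perturbing the key polynomial by a term of value $\ge\ga_i$ does not change the augmented valuation, and $\phi^*_i$ remains a key polynomial of the same degree) then gives $\mu^*_i=[\mu^*_{i-1};\phi^*_i,\ga^*_i]=[\mu_{i-1};\phi_i,\ga_i]=\mu_i$. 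Taking $i=r$ yields $\mu=\mu^*$.

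\textbf{The hard direction.} Assume $\mu=\mu^*$; I must recover $r=t$ and the numerical/congruence data. The key invariants are the degrees of the optimal key polynomials and the jumps in the value groups. First, $\deg(\phi_0)=\deg(\phi^*_0)=1$ automatically. The crucial point is that for an inductive valuation $\mu$ with optimal MacLane chain, the set of degrees $\{\deg(\phi_0)<\dots<\deg(\phi_r)\}$ is intrinsic: $\deg(\phi_i)$ is characterized as the minimal degree of a key polynomial for the truncation $\mu_i$, and more usefully, one can read off these degrees from how $\g_{\mu,m}=\{\mu(a):\deg a<m\}$ grows with $m$, or from the ``Okutsu'' data of $\mu$. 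So I would first show that the multiset of degrees, and the associated ramification/residue indices, coincide for the two chains, forcing $r=t$ and $\deg(\phi_i)=\deg(\phi^*_i)$ for all $i$. Then I would argue inductively that $\mu_i=\mu^*_i$: once $\mu_{i-1}=\mu^*_{i-1}$ is known and the degrees match, both $\phi_i$ and $\phi^*_i$ are key polynomials of the same minimal degree $m_i$ for $\mu_{i-1}$, hence $\mu_{i-1}$-equivalent up to the relevant normalization; using $\mu=\mu^*$ and applying both valuations to $\phi_i$ and to $\phi_i-\phi^*_i$ forces $\ga_i=\mu(\phi_i)=\mu^*(\phi_i)=\ga^*_i$ and $\mu_{i-1}(\phi_i-\phi^*_i)\ge\ga_i$ (otherwise $\phi_i$ and $\phi^*_i$ would generate different augmentations, contradicting $\mu_i=\mu^*_i$ forced at the next stage). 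Finally $\mu_i=[\mu_{i-1};\phi_i,\ga_i]=[\mu^*_{i-1};\phi^*_i,\ga^*_i]=\mu^*_i$.

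\textbf{Main obstacle.} The delicate step is proving that the degrees $\deg(\phi_i)$ and the length $r$ are intrinsic to $\mu$ — i.e., that an optimal MacLane chain is ``rigid'' in its combinatorial skeleton. This is where one genuinely uses optimality (without it Lemma \ref{existence} shows the length can be inflated). The argument rests on the fact that a key polynomial of minimal degree for an intermediate valuation is forced, degree-wise, and that passing from $\mu_{i-1}$ to $\mu_i$ strictly enlarges the set of attainable degrees of key polynomials in a controlled way. I would extract this from the structure theory of Proposition \ref{vphi} and Theorem \ref{kstructure} (tracking $e_i=(\g_{\mu_i}:\g_{\mu_i,\deg\phi_i})$ and $f_i=[\kappa_{\mu_i}:\kappa_{\mu_{i-1}}]$), exactly as in \cite[Prop. 3.6]{ResidualIdeals}; the arguments there, though phrased for rank-one valuations, only use the graded-algebra formalism available here and so carry over verbatim.
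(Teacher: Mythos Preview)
The paper does not reproduce a proof: it simply invokes \cite[Prop.~3.6]{ResidualIdeals} and remarks that the arguments there, though stated for rank-one valuations, use only the graded-algebra formalism and hence carry over. Your proposal does exactly this, so at the level of approach you match the paper.

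That said, your sketch of the underlying argument contains two slips worth correcting. In the ``if'' direction you assert $\mu_{i-1}(\phi_i)=\ga_i$; this is false, since the augmentation $\mu_i=[\mu_{i-1};\phi_i,\ga_i]$ requires $\mu_{i-1}(\phi_i)<\ga_i$. What is true is $\mu_i(\phi_i)=\ga_i$ (and hence $\mu(\phi_i)=\ga_i$ by Lemma~\ref{stable}). The inductive step survives anyway, because $\phi_i-\phi^*_i$ has degree $<m_i$ (both polynomials being monic of degree $m_i$), so its $\mu_{i-1}$-value equals its $\mu_i$-value, and one checks $\mu_i\le\mu^*_i\le\mu_i$ by expanding in $\phi_i$ and $\phi^*_i$ respectively.

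In the hard direction, your chain $\ga_i=\mu(\phi_i)=\mu^*(\phi_i)=\ga^*_i$ breaks at the last equality: one has $\ga^*_i=\mu^*(\phi^*_i)$, not $\mu^*(\phi_i)$, and nothing yet forces these to coincide. The correct route, once $\mu_{i-1}=\mu^*_{i-1}$ and the degree sequences are matched, is to observe that both $\mu_i$ and $\mu^*_i$ agree with $\mu$ on all polynomials of degree $<m_{i+1}$ (using $\mu_j$-minimality of $\phi_{j+1}$ together with Lemma~\ref{stable}); applying this to $\phi_i$, $\phi^*_i$, and their difference then yields $\ga_i=\ga^*_i$ and $\mu_{i-1}(\phi_i-\phi^*_i)\ge\ga_i$. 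These are precisely the details supplied in \cite{ResidualIdeals}, so your closing remark that the proof ``carries over verbatim'' is the honest summary.
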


Therefore, in any optimal MacLane chain of an inductive valuation $\mu$ as in (\ref{depth}), the intermediate valuations $\mu_0,\dots,\mu_{r-1}$, the values $\ga_0,\dots,\ga_r\in\gq$ and the positive integers $\deg(\phi_0),\dots,\deg(\phi_r)$, are intrinsic data of $\mu$.

The \emph{depth} of $\mu$ is the length $r$ of any optimal MacLane chain of $\mu$.  


The next result is an immediate consequence of \cite[Cor. 6.4]{KeyPol}.

\begin{lemma}\label{groupchain}
Consider a MacLane chain as in {\rm (\ref{depth})}, and denote $\mu_{-1}:=v$.

The value groups of the valuations $\mu_i$ and $v_{\phi_i}$ are:
$$\g_{v_{\phi_i}}=\g_{\mu_{i-1}}\subset \g_{\mu_{i}}=\gen{\g_{\mu_{i-1}},\ga_i}, \quad 0\le i\le r.$$ 

\end{lemma}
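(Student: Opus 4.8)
The plan is to treat the two assertions separately, reducing everything to the single identity $\g_{\mu_{i-1},\deg(\phi_i)}=\g_{\mu_{i-1}}$ for $0\le i\le r$. Since $\phi_i$ is, by the very construction of the chain, a key polynomial for $\mu_{i-1}$, Proposition \ref{vphi}(2) gives $\g_{v_{\phi_i}}=\g_{\mu_{i-1},\deg(\phi_i)}$, while Proposition \ref{extension}(d), applied to $\mu_i=[\mu_{i-1};\phi_i,\ga_i]$, gives $\g_{\mu_i}=\gen{\g_{\mu_{i-1},\deg(\phi_i)},\ga_i}$. Hence, once the identity above is known, both the inclusion $\g_{v_{\phi_i}}\subset\g_{\mu_i}$ and the equality $\g_{\mu_i}=\gen{\g_{\mu_{i-1}},\ga_i}$ follow at once.

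I would prove $\g_{\mu_{i-1},\deg(\phi_i)}=\g_{\mu_{i-1}}$ by induction on $i$ along the MacLane chain. For $i=0$ one has $\deg(\phi_0)=1$, so the value group of $v_{\phi_0}$ is $v(K^*)=\g=\g_{\mu_{-1}}$, and there is nothing to prove. For the inductive step I would first record the elementary observation that if $\deg(a)<\deg(\phi_k)$ then the $\phi_k$-expansion of $a$ is the single term $a$, so that $\mu_k(a)=\mu_{k-1}(a)$; in particular $\g_{\mu_{i-1},\deg(\phi_{i-1})}=\g_{\mu_{i-2},\deg(\phi_{i-1})}=\g_{v_{\phi_{i-1}}}$, and the last group equals $\g_{\mu_{i-2}}$ by the induction hypothesis. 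Because $\deg(\phi_{i-1})\mid\deg(\phi_i)$, enlarging the degree bound can only enlarge the value set, so $\g_{\mu_{i-2}}=\g_{\mu_{i-1},\deg(\phi_{i-1})}\subset\g_{\mu_{i-1},\deg(\phi_i)}\subset\g_{\mu_{i-1}}$; and Proposition \ref{vphi}(1), applied to $\mu_{i-1}$ and its minimal-degree key polynomial $\phi_{i-1}$, gives $\g_{\mu_{i-1}}=\gen{\g_{\mu_{i-1},\deg(\phi_{i-1})},\mu_{i-1}(\phi_{i-1})}=\gen{\g_{\mu_{i-2}},\ga_{i-1}}$, using that $\mu_{i-1}(\phi_{i-1})=\ga_{i-1}$ in a MacLane chain. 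So the whole matter reduces to showing $\ga_{i-1}\in\g_{\mu_{i-1},\deg(\phi_i)}$.

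If $\deg(\phi_{i-1})<\deg(\phi_i)$ this is immediate, as $\ga_{i-1}=\mu_{i-1}(\phi_{i-1})$ is the value of a polynomial of degree below the bound. The remaining, and main, difficulty is the equal-degree case $\deg(\phi_{i-1})=\deg(\phi_i)=:m$, which does occur in non-optimal MacLane chains; here I would invoke the defining MacLane condition $\phi_i\nmid_{\mu_{i-1}}\phi_{i-1}$ to exclude the degenerate possibility $\ga_{i-1}\notin\g_{\mu_{i-1},m}$. Indeed, if $\mu_{i-1}(\phi_{i-1})$ did not lie in $\g_{\mu_{i-1},m}$, then for any key polynomial $\psi$ of $\mu_{i-1}$ of degree $m$ one writes $\psi=\phi_{i-1}+g$ with $\deg(g)<m$; since $H_{\mu_{i-1}}(g)$ is then a unit (the degree of $g$ lies below the minimal degree of a key polynomial of $\mu_{i-1}$) whereas $H_{\mu_{i-1}}(\psi)$ generates a non-zero prime ideal, one cannot have $\mu_{i-1}(g)<\mu_{i-1}(\phi_{i-1})$ (that would force $H_{\mu_{i-1}}(\psi)=H_{\mu_{i-1}}(g)$), nor $\mu_{i-1}(g)=\mu_{i-1}(\phi_{i-1})$ (these two values lie in different cosets of $\g_{\mu_{i-1},m}$); hence $\mu_{i-1}(g)>\mu_{i-1}(\phi_{i-1})$, so $\mu_{i-1}(\psi)=\mu_{i-1}(\phi_{i-1})$ and $\psi\sim_{\mu_{i-1}}\phi_{i-1}$. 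Taking $\psi=\phi_i$ contradicts $\phi_i\nmid_{\mu_{i-1}}\phi_{i-1}$, so $\ga_{i-1}\in\g_{\mu_{i-1},m}$ and the induction closes. Alternatively, one may simply cite \cite[Cor. 6.4]{KeyPol} for the identity $\g_{\mu_{i-1},\deg(\phi_i)}=\g_{\mu_{i-1}}$ and skip this analysis entirely.
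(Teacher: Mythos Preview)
Your proposal is correct. The paper itself gives no argument at all: it simply states that the lemma ``is an immediate consequence of \cite[Cor.~6.4]{KeyPol}'', which is exactly the alternative you mention in your final sentence. Your self-contained inductive argument is therefore strictly more than what the paper provides; the reduction to the identity $\g_{\mu_{i-1},\deg(\phi_i)}=\g_{\mu_{i-1}}$ via Propositions~\ref{vphi} and~\ref{extension} is correct, and your handling of the equal-degree case (using the MacLane condition $\phi_i\nmid_{\mu_{i-1}}\phi_{i-1}$ to rule out $\ga_{i-1}\notin\g_{\mu_{i-1},m}$) is a clean unpacking of the cited corollary. One small presentational point: for $i=0$ the augmentation is $\mu_0=[\minf;\phi_0,\ga_0]$ rather than $[\mu_{-1};\phi_0,\ga_0]$, so the first paragraph's invocation of Propositions~\ref{vphi} and~\ref{extension} should strictly be with $\minf$ in place of $\mu_{-1}$; but since $\g_{\minf,1}=\g=\g_{\mu_{-1}}$ after the identification of section~\ref{secDepth0}, and since you treat the base case separately anyway, this is harmless.
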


In particular, we have a chain of ordered groups 
$$\g_{\mu_{-1}}:=\g\subset \g_{\mu_0}\subset\cdots\subset \g_{\mu_r}=\g_\mu,$$ and every quotient $\g_{\mu_i}/\g_{\mu_{i-1}}$  
  is a finite cyclic group generated by $\ga_i$. \e

From now on, we shall use the following notation, for all $0\le i\le r$. 
$$
m_i=\deg(\phi_i),\qquad e_i=\left(\g_{\mu_i}\colon \g_{\mu_{i-1}}\right),\qquad 
h_i=e_i\ga_i\in\g_{\mu_{i-1}}.
$$

The identification $\g_{\mu_{i-1}}=v\left(K^*_{\phi_i}\right)$ yields a computation of the ramification index of the extension $K_{\phi_i}/K$ in terms of these data: 
\begin{equation}\label{ephi}
e(\phi_i)=\left(\g_{\mu_{i-1}}\colon \g\right)=e_0\cdots e_{i-1}.
\end{equation}

\subsection*{Chain of homomorphisms between the graded algebras}
Some more data supported by a MacLane chain are derived from the chain of homomorphisms:
$$
\gg_{\mu_0}\lra\gg_{\mu_1}\lra\quad\cdots\quad \lra\gg_{\mu_{r-1}}\lra\gg_{\mu}.
$$

Denote $\Delta_i=\Delta_{\mu_i}$ for $0\le i\le r$. By Lemma \ref{imagedelta}, there is a sequence of fields
$$
 \ka_0=\op{Im}( k\to\Delta_0), \qquad   \ka_i=\op{Im}(\Delta_{i-1}\to\Delta_i),\quad 1\le i\le r,
$$
where $ \ka_i$ is isomorphic to the residue class field $ k_{\phi_i}$ of the extension $K_{\phi_i}/K$ determined by $\phi_i$. In particular, $ \ka_i$ is a finite extension of $ k$. 


We identify $ k$ with $ \ka_0$, and each field $ \ka_i\subset\Delta_i$ with its image under the canonical map $ \Delta_i\to\Delta_j$ for $j\ge i$. Thus, we consider as inclusions the canonical embeddings: 
$$
k= \ka_0\subset  \ka_1\subset\cdots\subset  \ka_r. 
$$

The residual degree of the extension $K_{\phi_i}/K$ can be computed as:
\begin{equation}\label{fphi}
f(\phi_i)=\left[ \ka_i\colon  \ka_0\right]=f_0\cdots f_{i-1},\qquad f_{i-1}=[ \ka_{i}\colon  \ka_{i-1}], \quad 1\le i \le r.
\end{equation}

\subsection{Chain of finitely-generated value subgroups}\label{subsecChainFG}
If the group $\gm$ is not finitely-generated, the homomorphism 
$\mu\colon K(x)^* \twoheadrightarrow \gm$ does not admit a section.

In this section, the index $i$ takes any integer value $0\le i\le r$. 

Let us choose finitely-generated subgroups $\g_i\subset\g_{\mu_i}$. In section \ref{subsecRat}, we shall construct a section $\g_i\hookrightarrow K(x)^*$ of $\mu_i$. 

Since $\g_{\mu_i}=\g+\gen{\ga_0,\dots,\ga_i}$, there exist $\al_i\in\g$ such that $h_i\in\al_i+\gen{\ga_0,\dots,\ga_{i-1}}$. 

\begin{definition}\label{defgf}
We fix a finitely-generated subgroup $\g_{-1}\subset\g$ containing $\al_0,\dots,\al_r$. This choice determines finitely-generated subgroups
$$
\g_i:=\g_{\mu_i}^{\op{fg}}:=\gen{\g_{i-1},\ga_i}=\gen{\g_{-1},\ga_0,\dots,\ga_i}\subset \g_{\mu_i}.
$$
By construction, $h_i=e_i\ga_i\in\g_{i-1}$ for all $i$. 
\end{definition}

These finitely-generated groups form a chain $\g_{-1}\subset \g_0\subset\cdots\subset \g_r$,
with
\begin{equation}\label{intersection}
\g_i\cap\g_{\mu_{i-1}}=\g_{i-1},\qquad \g_i/\g_{i-1}\simeq \Z/e_i\Z. 
\end{equation}

Finitely-generated ordered groups are free as $\Z$-modules. Take a basis of $\g_{-1}$:
$$
\g_{-1}=\Gi{0}.
$$
From this basis, we shall derive specific bases for the other groups:
$$
\g_i=\Gi{i+1},
$$
by a recurrent procedure. To this end, let us write:
\begin{equation}\label{u}
\ga_i=\left(\iota_{i,1}\cdots\iota_{i,k}\right)\mathbf{u}\in(\g_{i-1})_\Q,\qquad \mathbf{u}=\left(h_{i,1}/e_{i,1} \dots\, h_{i,k}/e_{i,k} \right)^t\in\Q^{k\times 1}
\end{equation}
with $h_{i,j},\,e_{i,j}$ coprime integers with $e_{i,j}>0$, for all $1\le j\le k$. \e

\noindent{\bf Notation. }Denote $e'_{i,1}=d_{i,1}=1$, and
$$
e'_{i,j}=e_{i,1} \cdots e_{i,j-1}/\left(d_{i,1} \cdots d_{i,j-1}\right),  \qquad    d_{i,j}=\gcd\left(e_{i,j},e'_{i,j}\right), \quad 1< j \leq k.
$$
There are uniquely determined integers $\ell_{i,j}$, $\ell'_{i,j}$ satisfying B\'ezout identities 
\begin{equation}\label{bezout}
\ell_{i,j}h_{i,j}e'_{i,j}+\ell'_{i,j}e_{i,j}= d_{i,j} , \quad \ 0\leq \ell_{i,j} < e_{i,j}/d_{i,j} ,  \quad 1\leq j \leq k,
\end{equation}

\begin{lemma}\label{lcm}
$e_{i,1} \cdots e_{i,k}/\left(d_{i,1}\cdots d_{i,k}\right)=\lcm\left(e_{i,1},\dots,e_{i,k}\right)=e_i$.
\end{lemma}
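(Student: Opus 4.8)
The statement is purely about the integers $e_{i,j}$, $d_{i,j}$ defined by the recursion in the Notation, and it should follow by induction on $k$. The plan is to fix $i$ (it plays no role) and show, for every $1\le n\le k$, the identity
$$
\frac{e_{i,1}\cdots e_{i,n}}{d_{i,1}\cdots d_{i,n}}=\lcm\left(e_{i,1},\dots,e_{i,n}\right),
$$
the case $n=k$ being the assertion that the quantity equals $e_i$, once we recall that $e_i=\lcm(e_{i,1},\dots,e_{i,k})$ — this last equality is the content of Lemma \ref{lcm} together with the fact that $\ga_i$ has order $e_i$ in $\g_{\mu_i}/\g_{\mu_{i-1}}$ by \eqref{intersection}, and that the $h_{i,j}/e_{i,j}$ are the coordinates of $\ga_i$ in the basis of $(\g_{i-1})_\Q$ with $\gcd(h_{i,j},e_{i,j})=1$, so that the least common denominator of $\ga_i$ over $\g_{i-1}$ is exactly $\lcm(e_{i,1},\dots,e_{i,k})$.

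First I would treat the base case $n=1$: by definition $e'_{i,1}=d_{i,1}=1$, so $e_{i,1}/d_{i,1}=e_{i,1}=\lcm(e_{i,1})$. For the inductive step, assume the identity for $n-1$, i.e. $e_{i,1}\cdots e_{i,n-1}/(d_{i,1}\cdots d_{i,n-1})=\lcm(e_{i,1},\dots,e_{i,n-1})$. By the Notation this left-hand side is precisely $e'_{i,n}$; hence $e'_{i,n}=\lcm(e_{i,1},\dots,e_{i,n-1})$, which is the crucial reinterpretation. Then $d_{i,n}=\gcd(e_{i,n},e'_{i,n})=\gcd\bigl(e_{i,n},\lcm(e_{i,1},\dots,e_{i,n-1})\bigr)$, and the elementary identity $\lcm(a,b)=ab/\gcd(a,b)$ gives
$$
\frac{e_{i,n}\,e'_{i,n}}{d_{i,n}}=\lcm\bigl(e_{i,n},e'_{i,n}\bigr)=\lcm\bigl(e_{i,n},\lcm(e_{i,1},\dots,e_{i,n-1})\bigr)=\lcm(e_{i,1},\dots,e_{i,n}),
$$
using associativity of $\lcm$. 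Dividing numerator and denominator by $d_{i,1}\cdots d_{i,n-1}$ and using $e'_{i,n}=e_{i,1}\cdots e_{i,n-1}/(d_{i,1}\cdots d_{i,n-1})$ turns the left-hand side into $e_{i,1}\cdots e_{i,n}/(d_{i,1}\cdots d_{i,n})$, completing the induction.

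There is essentially no obstacle here: the whole argument is the observation that $e'_{i,n}$ is, by construction, the running $\lcm$ of the first $n-1$ denominators, so that each $d_{i,n}$ absorbs exactly the common part of the new denominator with that running $\lcm$. The only point requiring a sentence of care is the identification $e_i=\lcm(e_{i,1},\dots,e_{i,k})$, i.e. that the order of $\ga_i$ modulo $\g_{i-1}$ equals the least common denominator of its rational coordinates; this is standard for free abelian groups and follows from $\g_i/\g_{i-1}\simeq\Z/e_i\Z$ in \eqref{intersection}. I would state that identification explicitly at the start of the proof and then run the induction above.
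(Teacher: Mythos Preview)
Your proof is correct and follows essentially the same route as the paper: an induction showing that $e'_{i,n}$ is the running $\lcm$ of $e_{i,1},\dots,e_{i,n-1}$, then applying $\lcm(a,b)=ab/\gcd(a,b)$, with the final identification $e_i=\lcm(e_{i,1},\dots,e_{i,k})$ justified via the order of $\ga_i$ in the cyclic quotient. The only cosmetic difference is that the paper phrases the induction on the length of the sequence rather than on an index $n\le k$, and it cites the characterization $e_i\Z=\{e\in\Z\mid e\ga_i\in\g_{\mu_{i-1}}\}$ directly rather than \eqref{intersection}, but the content is the same.
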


\begin{proof}
For $k=1$ the statement is trivial, and for $k=2$ it is well known. 

Take $k>1$ and assume that the statement holds for sequences of less than $k$ integers. Then, $e'_{i,k}=\lcm\left(e_{i,1},\dots,e_{i,k-1}\right)$. Hence,
$$
\lcm\left(e_{i,1},\dots,e_{i,k}\right)=\lcm(e'_{i,k},e_{i,k})=e'_{i,k}e_{i,k}/d_{i,k}=e_{i,1} \cdots e_{i,k}/\left(d_{i,1}\cdots d_{i,k}\right).
$$
The identity $\lcm\left(e_{i,1},\dots,e_{i,k}\right)=e_i$ follows from $\ e_i\Z=\left\{e\in\Z\mid e\ga_i\in\g_{\mu_{i-1}}\right\}$. 
\end{proof}

\begin{lemma}\label{basisGi}
The following  family  $\iota_{i+1,1},\dots,\iota_{i+1,k}$ is a basis of $\g_i$:
$$\as{0.7}
(\iota_{i+1,1}\,\cdots\,\iota_{i+1,k})=(\iota_{i,1}\,\cdots\,\iota_{i,k})\,Q,\qquad
Q=\begin{pmatrix}
    d_{i,1}/e_{i,1}     &  & \lower.5ex\hbox{\hspace{-0.8cm} \bigZero} \\
\quad q_{m,j}   & \ddots &  \\
                     &    & d_{i,k}/e_{i,k}
\end{pmatrix},
$$
where $q_{m,j}= \ell_{i,j} e'_{i,j} h_{i,m}/ e_{i,m}$, for  $m>j$.
\end{lemma}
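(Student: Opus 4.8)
The plan is to verify that the proposed family generates $\g_i$ and is $\Z$-linearly independent, the latter being automatic once we check that $Q$ has determinant $\pm1/e_i$ and that the new generators actually lie in $\g_i$ with the right index. First I would record that $\g_i=\gen{\g_{i-1},\ga_i}$ and that, by \eqref{intersection} and Lemma \ref{lcm}, $\g_i/\g_{i-1}$ is cyclic of order $e_i=\lcm(e_{i,1},\dots,e_{i,k})$. Writing $\ga_i$ in the basis $\iota_{i,1},\dots,\iota_{i,k}$ of $\g_{i-1}$ via the rational coordinate vector $\mathbf u$ of \eqref{u}, the group $\g_i$ is the $\Z$-span of $\iota_{i,1},\dots,\iota_{i,k}$ together with $\sum_j (h_{i,j}/e_{i,j})\,\iota_{i,j}$. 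The task is thus a purely combinatorial one about the sublattice of $\Q^k$ generated by the standard basis and one extra rational vector.

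The key step is an inductive ``column-by-column'' clearing of denominators, exactly mirroring the proof of Lemma \ref{lcm}. For $j$ from $1$ to $k$, I would show that the B\'ezout identity \eqref{bezout}, $\ell_{i,j}h_{i,j}e'_{i,j}+\ell'_{i,j}e_{i,j}=d_{i,j}$, lets us replace the current generating set by one in which the $j$-th generator is $\iota_{i+1,j}=(d_{i,j}/e_{i,j})\,\iota_{i,j}+(\text{integer combination of }\iota_{i,1},\dots,\iota_{i,j-1})$, i.e.\ exactly the $j$-th column of $\iota_{i,\bullet}Q$; the coefficients $q_{m,j}=\ell_{i,j}e'_{i,j}h_{i,m}/e_{i,m}$ for $m>j$ are precisely what is produced when one propagates the partial fraction $\ell_{i,j}e'_{i,j}\cdot(h_{i,j}/e_{i,j})$ of $\ga_i$ across the remaining coordinates. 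Concretely, after step $j$ the running generator $\iota_{i,j+1}\wedge\cdots$ has lost its $e_{i,j}$-torsion: the accumulated denominator in coordinates $1,\dots,j$ becomes $e'_{i,j+1}=e_{i,1}\cdots e_{i,j}/(d_{i,1}\cdots d_{i,j})$, which is why the notation $e'_{i,j}$ appears. Since each such replacement is an invertible integral change of the generating set (it is a unitriangular-plus-diagonal operation whose inverse is again integral once one remembers $\iota_{i,j}$ itself is recoverable), the span is unchanged at every step.

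After all $k$ steps the generating set is $\iota_{i+1,1},\dots,\iota_{i+1,k}$, so these generate $\g_i$. To see it is a basis I would compute $\det Q=\prod_j d_{i,j}/e_{i,j}=1/e_i$ by Lemma \ref{lcm}; hence $[\g_{i-1}:\,(\iota_{i+1,\bullet})\text{ with the $\iota_{i,j}$ adjoined back})]$ bookkeeping gives that the index of $\gen{\iota_{i+1,1},\dots,\iota_{i+1,k}}$ inside $\g_i$ would be a proper divisor $>1$ only if the generators were dependent, contradicting that they span a group containing $\g_{i-1}$ with quotient of order exactly $e_i=|\g_i/\g_{i-1}|$. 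Equivalently, since $\det Q=1/e_i$, the lattice they span has covolume $e_i$ times that of $\g_i$ relative to $\g_{i-1}$'s frame, forcing it to equal $\g_i$ and the family to be free of rank $k$. The main obstacle I anticipate is purely notational: carefully tracking, at step $j$, that the already-cleared coordinates contribute exactly the integers $q_{m,j}$ and that the residual denominator is $e'_{i,j+1}$ with $d_{i,j}=\gcd(e_{i,j},e'_{i,j})$; this is a finite but fiddly induction, and getting the off-diagonal entries to land on the stated closed form $q_{m,j}=\ell_{i,j}e'_{i,j}h_{i,m}/e_{i,m}$ rather than some equivalent expression is where one must be most careful.
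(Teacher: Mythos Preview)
Your overall strategy---show that the proposed vectors lie in $\g_i$ and then match indices via a determinant---is the same skeleton the paper uses, but several of the details you sketch are wrong and would not go through as written.

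First, you have the triangularity of $Q$ reversed. The matrix $Q$ in the statement is \emph{lower} triangular: $\iota_{i+1,j}=(d_{i,j}/e_{i,j})\,\iota_{i,j}+\sum_{m>j}q_{m,j}\,\iota_{i,m}$, not a combination of $\iota_{i,1},\dots,\iota_{i,j-1}$. Moreover, the off-diagonal entries $q_{m,j}=\ell_{i,j}e'_{i,j}h_{i,m}/e_{i,m}$ are not integers in general (there is no reason for $e_{i,m}$ with $m>j$ to divide $e'_{i,j}$), so your claim that $\iota_{i+1,j}$ is an ``integer combination'' of earlier $\iota_{i,\bullet}$ is false. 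The correct way to see $\iota_{i+1,j}\in\g_i$ is to rewrite the tail as a piece of $\ga_i$: using the B\'ezout relation one gets
\[
\iota_{i+1,j}=\ell'_{i,j}\,\iota_{i,j}+\ell_{i,j}e'_{i,j}\Big(\ga_i-\sum_{t<j}\tfrac{h_{i,t}}{e_{i,t}}\,\iota_{i,t}\Big),
\]
and now $e'_{i,j}/e_{i,t}\in\Z$ for $t<j$ (this is where Lemma \ref{lcm} enters), so the right-hand side is a $\Z$-combination of $\iota_{i,1},\dots,\iota_{i,j}$ and $\ga_i$.

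Second, your ``invertible integral change of the generating set'' step is not justified: you are passing from $k+1$ generators $\iota_{i,1},\dots,\iota_{i,k},\ga_i$ to $k$ elements, and you never explain why the old generators are recoverable from the new ones. Your determinant paragraph is also garbled; knowing $\det Q=1/e_i$ only tells you that the lattice $\g'$ spanned by the new vectors satisfies $(\g':\g_{i-1})=e_i$, which does \emph{not} by itself force $\g'=\g_i$---you still need $\g'\subset\g_i$, which is exactly the point above.

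The paper avoids your inductive bookkeeping entirely by introducing the superlattice $\Lambda=\bigoplus_j \Z\,(\iota_{i,j}/e_{i,j})$. One computes $(\Lambda:\g_i)=(\prod e_{i,j})/e_i=\prod d_{i,j}$ by Lemma \ref{lcm}, while $P=\op{diag}(e_{i,1},\dots,e_{i,k})\,Q$ is an integer matrix with $\det P=\prod d_{i,j}$, giving $(\Lambda:\g')=\prod d_{i,j}$ as well. Combined with the containment $\g'\subset\g_i$ established above, this yields $\g'=\g_i$ in one stroke.
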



\begin{proof}
Consider the chain of $\Z$-modules $\g_{i-1}\subset\g_i\subset \Lambda$,
where $\Lambda$ is the $\Z$-module generated by $\iota_{i,1}/e_{i,1},\dots,\iota_{i,k}/e_{i,k}$. By Lemma \ref{lcm}, 
$$
\left(\Lambda\colon \g_i\right)=\left(\Lambda\colon \g_{i-1}\right)/\left(\g_i\colon \g_{i-1}\right)=e_{i,1} \cdots e_{i,k}/e_i=d_{i,1} \cdots d_{i,k}.
$$

On the other hand, let $\g'$ be the $\Z$-module generated by $\iota_{i+1,1},\dots,\iota_{i+1,k}$. Let us show that $\g'\subset \g_i$. 
In fact, by using the B\'ezout identities (\ref{bezout}) we get 
\begin{equation}\label{iotas}
\begin{array}{rl}
\iota_{i+1,j}\!\!&=\;\dfrac{d_{i,j}}{e_{i,j}}\,\iota_{i,j}+\ell_{i,j}e'_{i,j}\left(\dfrac{h_{i,j+1}}{e_{i,j+1}}\,\iota_{i,j+1}+\cdots+\dfrac{h_{i,k}}{e_{i,k}}\,\iota_{i,k}\right)\\&=\;\ell'_{i,j}\iota_{i,j}+\ell_{i,j}e'_{i,j}\left(\dfrac{h_{i,j}}{e_{i,j}}\,\iota_{i,j}+\dfrac{h_{i,j+1}}{e_{i,j+1}}\,\iota_{i,j+1}+\cdots+\dfrac{h_{i,k}}{e_{i,k}}\,\iota_{i,k}\right)\\&=\;
\ell'_{i,j}\iota_{i,j}+\ell_{i,j}e'_{i,j}\left(\ga_i-\dfrac{h_{i,1}}{e_{i,1}}\,\iota_{i,1}-\cdots-\dfrac{h_{i,j-1}}{e_{i,j-1}}\,\iota_{i,j-1}\right),
\end{array}
\end{equation}
for all $1\le j\le k$.
Now, by Lemma  \ref{lcm},
\begin{equation}\label{EE}
e'_{i,j}/e_{i,t}=\lcm(e_{i,1},\dots,e_{i,j-1})/e_{i,t}\in\Z,\qquad 1\le t<j. 
\end{equation}
Hence, (\ref{iotas}) expresses each $\iota_{i+1,j}$ as a $\Z$-linear combination of $\iota_{i,1},\dots,\iota_{i,j},\ga_i\in\g_i$.

Consider the chain of $\Z$-modules $\g'\subset\g_i\subset \Lambda$.

The lower triangular matrix $P=\di(e_{i,1},\dots,e_{i,k})Q$ has integer coefficients and $\det(P)=d_{i,1} \cdots d_{i,k}$. We may rewrite the claimed basis of $\g_i$ as:
$$
(\iota_{i+1,1}\,\cdots\,\iota_{i+1,k})=\left((1/e_{i,1})\,\iota_{i,1}\,\cdots\,(1/e_{i,k})\,\iota_{i,k}\right)P. 
$$
Hence, $(\Lambda\colon \g')=\det(P)=d_{i,1} \cdots d_{i,k}=(\Lambda\colon \g_i)$. This proves $\g'=\g_i$.
\end{proof}

\subsection{Rational functions of a MacLane chain}\label{subsecRat}

Our aim in this section is to construct an element $y_i\in\Delta_i$ which is transcendental over $\ka_i$ and satisfies $\Delta_i=\ka_i[y_i]$.

As indicated in Theorem \ref{kstructure}, we may take $$y_i=H_{\mu_i}(\phi_i)^{e_i}\epsilon_i,$$
for an arbitrary unit $\epsilon_i\in\gg_{\mu_i}^*$ of degree $-e_i\ga_i=-h_i$.
We shall construct this unit as the image in $\gg_{\mu_i}$ of an element in $K(x)$ with $\mu_i$-value equal to $-h_i$.

To this end, we construct in a recursive way rational functions $\pi_{i+1,j}\in K(x)^*$, for $-1\le i\le r$, whose $\mu_i$-values attain the chosen basis $\iota_{i+1,1},\dots,\iota_{i+1,k}$ of $\g_i$.

This will determine group homomorphisms
$$
\pi_{i+1}\colon \g_i\lra K(x)^*,\qquad \alpha\longmapsto \pi_{i+1}^\alpha,\qquad \mu_{i}(\pi_{i+1}^\alpha)=\alpha,
$$
where we agree that $\mu_{-1}=v$ and we define 
$$\pi_{i+1}^\alpha=\pi_{i+1,1}^{n_1}  \cdots  \pi_{i+1,k}^{n_k},\qquad \mbox{if }\ \alpha= n_1\, \iota_{i+1,1}+ \cdots + n_k\, \iota_{i+1,k},\quad n_1,\dots,n_k\in\Z.$$

\begin{definition}\label{ratfs} 
Choose arbitrary $\pi_{0,j}\in K$ such that $v(\pi_{0,j})=\iota_{0,j}$ for all $1\le j\le k$.

For $0\le i\le r$ and $1\le j\le k$ we define 
$$
Y_i=\phi_i^{e_i}\,\pi_i^{-h_i},\qquad
\pi_{i+1,j}=\left( \phi_i \, \pi_{i,1}^{-h_{i,1}/e_{i,1}} \cdots\, \pi_{i,j-1}^{-h_{i,j-1}/e_{i,j-1}}\right)^{\ell_{i,j}e'_{i,j}} \,  \pi_{i,j}^{\ell'_{i,j}}.
$$
In the definition of $\pi_{i+1,j}$, the rational functions $\pi_{i,1},\dots,\pi_{i,j-1}$ appear with integer exponents, because $e'_{i,j}/e_{i,t}$ is an integer for $t<j$, as shown in (\ref{EE}).
\end{definition}

For $i\ge0$, it is easy to deduce from the definition that:
\begin{equation}\label{phis}
\pi_{i+1,j}=a\,\phi_0^{n_0}\cdots\phi_{i-1}^{n_{i-1}}\phi_i^{\ell_{i,j}e'_{i,j}},
\end{equation}
for some $a\in K^*$ and certain (eventually negative) integer exponents $n_0,\dots, n_{i-1}$. Since $\phi_{i+1}\nmid_{\mu_i}\phi_\ell$ for $\ell\le i$, Lemma \ref{stable} shows that   
\begin{equation}\label{stability}
\mu_i(Y_i)=\mu_{i+1}(Y_i)=\cdots=\mu(Y_i),\quad \mu_i(\pi_{i+1,j})=\mu_{i+1}(\pi_{i+1,j})=\cdots=\mu(\pi_{i+1,j}).
\end{equation}
 
Clearly, $\mu_{i}(Y_i)=0$ by the definition of  $Y_i$. Let us compute the other stable value.

\begin{lemma}\label{values}For all $\ -1\le i\le r$, \ $1\leq j \leq k$, \ we have
$\ \mu_{i}(\pi_{i+1,j})=\iota_{i+1,j}$.
\end{lemma}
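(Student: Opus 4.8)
The statement $\mu_i(\pi_{i+1,j}) = \iota_{i+1,j}$ is an identity of elements of $\g_{\mu_i}$ (with the convention $\mu_{-1} = v$), so the natural strategy is induction on $i$, where the case $i = -1$ is immediate: by Definition \ref{ratfs} we chose $\pi_{0,j} \in K$ with $v(\pi_{0,j}) = \iota_{0,j}$, which is exactly the claim for $i = -1$. For the inductive step, I would take the defining formula
$$
\pi_{i+1,j} = \left(\phi_i\,\pi_{i,1}^{-h_{i,1}/e_{i,1}}\cdots\pi_{i,j-1}^{-h_{i,j-1}/e_{i,j-1}}\right)^{\ell_{i,j}e'_{i,j}}\pi_{i,j}^{\ell'_{i,j}}
$$
and apply $\mu_i$ to both sides, using that $\mu_i$ is a valuation (additive on products, so multiplicative exponents come out as integer — or, after clearing, rational — multiples of values). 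The key input is that $\mu_i(\phi_i) = \ga_i$, which holds in a MacLane chain by Lemma \ref{stable} (stated in the excerpt as $\mu(\phi_i) = \mu_i(\phi_i) = \ga_i$), together with the values $\mu_i(\pi_{i,t}) = \iota_{i,t}$ for $t < j$, which follow from the induction hypothesis combined with the stability relations (\ref{stability}) (since $\pi_{i,t}$ is a rational function already defined at stage $i-1$, and $\phi_i \nmid_{\mu_{i-1}} \phi_\ell$ for $\ell \le i-1$ gives $\mu_{i-1}(\pi_{i,t}) = \mu_i(\pi_{i,t})$).

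Carrying this out, $\mu_i(\pi_{i+1,j})$ becomes
$$
\ell_{i,j}e'_{i,j}\left(\ga_i - \frac{h_{i,1}}{e_{i,1}}\iota_{i,1} - \cdots - \frac{h_{i,j-1}}{e_{i,j-1}}\iota_{i,j-1}\right) + \ell'_{i,j}\iota_{i,j}.
$$
But this is precisely the right-hand side of the third displayed expression for $\iota_{i+1,j}$ in equation (\ref{iotas}) from the proof of Lemma \ref{basisGi}. So once the values are substituted, the identity $\mu_i(\pi_{i+1,j}) = \iota_{i+1,j}$ is literally equation (\ref{iotas}), and no further computation is needed — I would simply cite (\ref{iotas}).

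**Main obstacle.** The genuinely delicate point is legitimacy of the value computation: the exponents $-h_{i,t}/e_{i,t}$ are rational, not integral, so a priori $\pi_{i+1,j}$ is not obviously a rational function and $\mu_i$ cannot be applied termwise. The paper's Definition \ref{ratfs} already flags this: the product $\ell_{i,j}e'_{i,j}$ is chosen so that $e'_{i,j}/e_{i,t}$ is an integer for $t < j$, which is exactly (\ref{EE}) (a consequence of Lemma \ref{lcm}, since $e'_{i,j} = \lcm(e_{i,1},\dots,e_{i,j-1})$). So I would open the argument by recording that $\ell_{i,j}e'_{i,j}\cdot h_{i,t}/e_{i,t} \in \Z$ for $t < j$, hence $\pi_{i+1,j}$ genuinely lies in $K(x)^*$ and the additivity of $\mu_i$ over this honest product is valid; the formally fractional coefficients reassemble into $\iota_{i+1,j}$ exactly via (\ref{iotas}). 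A minor secondary bookkeeping point is that for $i = 0$ one must interpret the empty product $\pi_{0,1}^{\cdots}\cdots\pi_{0,j-1}^{\cdots}$ (when $j=1$) as $1$, and the formula still reduces correctly to $\mu_0(\phi_0^{\ell_{0,1}e'_{0,1}}\pi_{0,1}^{\ell'_{0,1}}) = \ell_{0,1}e'_{0,1}\ga_0 + \ell'_{0,1}\iota_{0,1} = \iota_{1,1}$ by the B\'ezout identity (\ref{bezout}) and $h_{0,1} = \ga_0$ when $k=1$, again matching (\ref{iotas}).
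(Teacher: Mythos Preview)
Your proposal is correct and follows essentially the same approach as the paper: induction on $i$ with base case $i=-1$ given by the very choice of $\pi_{0,j}$, and in the inductive step applying $\mu_i$ to the defining formula for $\pi_{i+1,j}$, using stability (\ref{stability}) to get $\mu_i(\pi_{i,t})=\mu_{i-1}(\pi_{i,t})=\iota_{i,t}$, and then recognizing the resulting expression as (\ref{iotas}). The paper's proof is exactly this, just more tersely written; your extra care about the integrality of exponents is already addressed in Definition~\ref{ratfs} and need not be repeated.
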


\begin{proof} 
For $i=-1$, $\mu_{-1}(\pi_{0,j})=\iota_{0,j}$ for all $j$ by definition. 
Suppose $i\ge0$ and the identity  holds for $i-1$. Then $\mu_i(\pi_{i,j})=\mu_{i-1}(\pi_{i,j})=\iota_{i,j}$ by (\ref{stability}). Hence,
$$
\mu_i(\pi_{i+1,j})=  l_{i,j}e'_{i,j} \left( \ga_i -\dfrac{h_{i,1}}{e_{i,1}}\, \iota_{i,1}-\cdots- \dfrac{h_{i,j-1}}{e_{i,j-1}}\, \iota_{i,j-1} \right) + l'_{i,j}\, \iota_{i,j}   = \iota_{i+1,j},                
$$
for all $1\le j\le k$, as shown in (\ref{iotas}).
\end{proof}

Our next aim is to establish certain relationships between the rational functions of Definition \ref{ratfs}. To this end we introduce some more notation.

\begin{definition}
Take an index $0\le i	\le r$. Denote:
$$
L'_i=\ell'_{i,1}\cdots \,\ell'_{i,k},\qquad
L_{i,j}=\ell_{i,j}\,\ell'_{i,j+1}\cdots \,\ell'_{i,k},\quad 1\le j\le k.
$$ 

Consider the function $L_i\colon (\g_{i-1})_\Q\lra \Q$ defined as
$$
L_i\left(\left(\iota_{i,1}\cdots \iota_{i,k}\right)\mathbf{v}\right)= \left(L_{i,1}\cdots L_{i,k}\right)\mathbf{v},\quad \forall\, \mathbf{v}\in\Q^{k\times 1}. 
$$
\end{definition}

\begin{lemma}\label{LH} 
For all \ $0 \leq i \leq r$, we have $L'_i+L_i(\ga_i) = 1/e_i$.
\end{lemma}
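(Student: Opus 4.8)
\textbf{Proof plan for Lemma \ref{LH}.}

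The plan is to prove the identity $L'_i+L_i(\ga_i)=1/e_i$ by inducting on $k$, the number of basis elements $\iota_{i,1},\dots,\iota_{i,k}$ of $\g_{i-1}$, since all the relevant quantities ($L'_i$, the coefficients $L_{i,j}$, the B\'ezout data $\ell_{i,j},\ell'_{i,j},e'_{i,j},d_{i,j}$) are defined by recursions on $j$. First I would unwind the definitions. Writing $\ga_i = \sum_{j}(h_{i,j}/e_{i,j})\,\iota_{i,j}$, we have $L_i(\ga_i) = \sum_{j=1}^k L_{i,j}\,(h_{i,j}/e_{i,j})$ with $L_{i,j}=\ell_{i,j}\ell'_{i,j+1}\cdots\ell'_{i,k}$ and $L'_i=\ell'_{i,1}\cdots\ell'_{i,k}$. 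The quantity to analyze is therefore
$$
\ell'_{i,1}\cdots\ell'_{i,k} + \sum_{j=1}^k \ell_{i,j}\,\ell'_{i,j+1}\cdots\ell'_{i,k}\,\frac{h_{i,j}}{e_{i,j}},
$$
and I want to show it equals $1/e_i = d_{i,1}\cdots d_{i,k}/(e_{i,1}\cdots e_{i,k})$ by Lemma \ref{lcm}.

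The key observation driving the induction is that each B\'ezout identity (\ref{bezout}) reads $\ell_{i,j}h_{i,j}e'_{i,j}+\ell'_{i,j}e_{i,j}=d_{i,j}$, i.e. $\ell'_{i,j} + \ell_{i,j}h_{i,j}e'_{i,j}/e_{i,j} = d_{i,j}/e_{i,j}$. I would group the expression above so as to apply this to the index $j=1$: factoring $\ell'_{i,2}\cdots\ell'_{i,k}$ out of the first two terms gives
$$
\ell'_{i,2}\cdots\ell'_{i,k}\Bigl(\ell'_{i,1}+\ell_{i,1}\frac{h_{i,1}}{e_{i,1}}\Bigr) + \sum_{j=2}^k \ell_{i,j}\,\ell'_{i,j+1}\cdots\ell'_{i,k}\,\frac{h_{i,j}}{e_{i,j}}.
$$
Since $e'_{i,1}=1$, the inner parenthesis is exactly $d_{i,1}/e_{i,1}$. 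So the whole expression equals $\tfrac{d_{i,1}}{e_{i,1}}\bigl(\ell'_{i,2}\cdots\ell'_{i,k} + \sum_{j=2}^k \ell_{i,j}\ell'_{i,j+1}\cdots\ell'_{i,k}\,h_{i,j}/e_{i,j}\bigr)$ \emph{provided} the factor $d_{i,1}/e_{i,1}$ can be pulled out of the $j\ge 2$ sum as well, which it cannot directly — here is where the bookkeeping must be done carefully, see below. The cleaner route is to recognize the bracketed tail as the analogous expression for the shifted sequence $e_{i,2},\dots,e_{i,k}$ with modified B\'ezout data, and to use the recursion $e'_{i,j} = e_{i,1}\cdots e_{i,j-1}/(d_{i,1}\cdots d_{i,j-1})$ to track how $e'$ changes under the shift. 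Concretely, I would set up the induction so that the hypothesis applied to the length-$(k-1)$ tail yields $\sum_{j=2}^k(\text{tail terms}) + \ell'_{i,2}\cdots\ell'_{i,k}$ equals a product of $d$'s over a product of $e$'s for indices $2,\dots,k$, then multiply through by $d_{i,1}/e_{i,1}$ and match against $\prod_{j=1}^k d_{i,j}/\prod_{j=1}^k e_{i,j}$.

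\textbf{Main obstacle.} The delicate point is that the B\'ezout datum $e'_{i,j}$ for $j\ge 2$ genuinely involves $e_{i,1}$ and $d_{i,1}$, so the length-$(k-1)$ tail is \emph{not} literally an instance of the same lemma with the first index deleted — the $e'$ values shift by the factor $e_{i,1}/d_{i,1}$. Resolving this cleanly is the crux: I expect the right formulation is to prove a slightly more general statement, replacing $1/e_i$ by $c\cdot\prod_{j=1}^k d_{i,j}/\prod_{j=1}^k e_{i,j}$ for a running constant $c$ (or equivalently carrying along the "accumulated" factor $e'_{i,j}$ explicitly as a parameter), so that the induction closes. An alternative that sidesteps the shift entirely is to reverse the direction of induction — peel off the \emph{last} index $k$ first, since $\ell'_{i,k}$ and $\ell_{i,k}$ appear in \emph{every} term of both $L'_i$ and $L_i(\ga_i)$, so $L'_i + L_i(\ga_i) = \ell'_{i,k}\bigl(L'_{i,<k} + L_{i,<k}\bigr) + \ell_{i,k}(h_{i,k}/e_{i,k})$ where the subscript $<k$ denotes the length-$(k-1)$ versions; then the B\'ezout identity at $j=k$ gives $\ell'_{i,k} + \ell_{i,k}h_{i,k}e'_{i,k}/e_{i,k} = d_{i,k}/e_{i,k}$, and one needs the inductive value of $L'_{i,<k}+L_{i,<k}$ together with the fact that $e'_{i,k} = e_{i,1}\cdots e_{i,k-1}/(d_{i,1}\cdots d_{i,k-1}) = 1/e_{i-1,\text{tail}}$-type quantity — precisely the inductive hypothesis $L'_{i,<k}+L_{i,<k} = 1/\lcm(e_{i,1},\dots,e_{i,k-1}) = d_{i,1}\cdots d_{i,k-1}/(e_{i,1}\cdots e_{i,k-1})$. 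With that, $\ell'_{i,k}\cdot L'_{i,<k}\cdot(\text{stuff}) = \ell'_{i,k}/e'_{i,k}$ combines with $\ell_{i,k}h_{i,k}/e_{i,k}$ to reconstitute $d_{i,k}/(e_{i,k}e'_{i,k})= d_{i,k}\,d_{i,1}\cdots d_{i,k-1}/(e_{i,k}\,e_{i,1}\cdots e_{i,k-1}) = 1/e_i$ by Lemma \ref{lcm}. This backward induction is the version I would write up; the base case $k=1$ is just the B\'ezout identity $\ell'_{i,1}+\ell_{i,1}h_{i,1}/e_{i,1} = 1/e_{i,1} = 1/e_i$ since $e'_{i,1}=1$.
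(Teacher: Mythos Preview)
Your backward induction on $k$ is correct. The decomposition
\[
L'_i+L_i(\ga_i)=\ell'_{i,k}\bigl(L'_{i,<k}+L_{i,<k}\bigr)+\ell_{i,k}\,\frac{h_{i,k}}{e_{i,k}}
\]
is valid (since the B\'ezout data $e'_{i,j},d_{i,j},\ell_{i,j},\ell'_{i,j}$ for $j\le k-1$ depend only on $e_{i,1},\dots,e_{i,k-1},h_{i,1},\dots,h_{i,k-1}$), the inductive hypothesis gives $L'_{i,<k}+L_{i,<k}=1/e'_{i,k}$, and the B\'ezout identity at $j=k$ divided by $e'_{i,k}e_{i,k}$ yields $\ell'_{i,k}/e'_{i,k}+\ell_{i,k}h_{i,k}/e_{i,k}=d_{i,k}/(e'_{i,k}e_{i,k})=1/e_i$ via Lemma~\ref{lcm}. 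This is complete.

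The paper organizes the same computation differently: with $k$ fixed, it proves by forward induction on $j$ the intermediate identity
\[
L'_i+\sum_{t=1}^{j}L_{i,t}\,\frac{h_{i,t}}{e_{i,t}}=\ell'_{i,j+1}\cdots\ell'_{i,k}\,\frac{1}{e'_{i,j+1}},
\]
whose case $j=k$ is the lemma. The inductive step is exactly the B\'ezout identity rewritten as $\ell'_{i,j}/e'_{i,j}+\ell_{i,j}h_{i,j}/e_{i,j}=1/e'_{i,j+1}$. In fact your abandoned ``forward'' attempt is precisely the paper's $j=1$ step; the obstacle you diagnosed---that the tail is not literally a shorter instance of the lemma because the $e'$ values carry $e_{i,1}/d_{i,1}$---is sidestepped not by reducing $k$ but by keeping $k$ fixed and inducting on the partial sum. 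One practical advantage of the paper's formulation is that the intermediate identity above is invoked again in the proof of Lemma~\ref{LQ}, so it pays for itself; your backward induction does not produce that byproduct.
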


\begin{proof} The following identity is an immediate consequence of (\ref{bezout}):
\begin{equation}\label{queue}
\dfrac{\ell'_{i,j}}{e'_{i,j}} + \dfrac{\ell_{i,j}h_{i,j}}{e_{i,j}} = \dfrac{d_{i,j}}{e_{i,j}' e_{i,j}}= \dfrac{d_{i,1} \ldots d_{i,j}}{e_{i,1} \ldots e_{i,j}} =: \dfrac{1}{e'_{i,j+1}},\quad 1\leq j \leq k.
\end{equation}
Now, we claim that:
\begin{equation}\label{eqL's}
L'_i+L_{i,1} \,\dfrac{h_{i,1}}{e_{i,1}} + \cdots + L_{i,j}\, \dfrac{h_{i,j}}{e_{i,j}} = \ell'_{i,j+1} \ldots \ell'_{i,k}\,\frac{1}{e_{i,j+1}'},\quad 1\leq j \leq k.
\end{equation}
For $j=k$, this identity proves the lemma, since $e_{i,k+1}'=e_i$ by Lemma \ref{lcm}.

Let us apply an inductive argument.
For $j=1$, (\ref{eqL's}) follows directly from (\ref{queue}), having in mind that $e'_{i,1}=1$.
Now, if (\ref{eqL's}) holds for $j-1$:
$$L'_i+L_{i,1} \,\dfrac{h_{i,1}}{e_{i,1}} + \cdots + L_{i,j-1} \,\dfrac{h_{i,j-1}}{e_{i,j-1}} = \ell'_{i,j} \ldots \ell'_{i,k}\,\dfrac{1}{e_{i,j}'} ,$$
we deduce the identity (\ref{eqL's}) for $j$, just by adding $L_{i,j}h_{i,j}/e'_{i,j}$ to both sides of the equality, and by applying (\ref{queue}) to the right-hand side. 
\end{proof}

\begin{lemma}\label{LQ} For $0\leq i \leq r$, let $Q$ be the matrix introduced in Lemma \ref{basisGi}. Then,
$$e_i(L_{i,1}\, \dots\, L_{i,k}) \,Q = \ (\ell_{i,1}e_{i,1}' \,\dots\, \ell_{i,k} e_{i,k}').$$
\end{lemma}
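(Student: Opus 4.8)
The plan is to verify the matrix identity column by column, relying on the explicit lower-triangular form of $Q$ recorded in Lemma \ref{basisGi} and the numerical relations among the $L_{i,j}$, $\ell_{i,j}$, $e_{i,j}$, $e'_{i,j}$, $d_{i,j}$. Fix $i$ and suppress it from the notation, writing $L_j$, $\ell_j$, $\ell'_j$, $e_j$, $e'_j$, $d_j$, $q_{m,j}$, so that $Q$ has diagonal entries $d_j/e_j$ and subdiagonal entries $q_{m,j}=\ell_j e'_j h_{m}/e_{m}$ for $m>j$. The $j$-th entry of the row vector $e_i(L_1\,\dots\,L_k)\,Q$ is then
$$
e_i\left(L_j\,\frac{d_j}{e_j}+\sum_{m>j} L_m\, q_{m,j}\right)
= e_i\, L_j\,\frac{d_j}{e_j} + e_i\,\ell_j e'_j \sum_{m>j} L_m\,\frac{h_{m}}{e_{m}},
$$
and the goal is to show this equals $\ell_j e'_j$.

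First I would evaluate the tail sum $\sum_{m>j} L_m\, h_m/e_m$. This is precisely the kind of quantity controlled by the inductive identity (\ref{eqL's}) from the proof of Lemma \ref{LH}: subtracting the instance of (\ref{eqL's}) for index $j$ from the instance for index $k$ gives
$$
\sum_{m>j} L_m\,\frac{h_{m}}{e_{m}} = \ell'_{k}\cdots\ell'_{k}\,\frac{1}{e_{k+1}'} - \ell'_{j+1}\cdots\ell'_{k}\,\frac{1}{e_{j+1}'} = \frac{1}{e_i} - \frac{L'_i}{L'_{j}\cdots}\cdots,
$$
so after clearing the common factor $\ell'_{j+1}\cdots\ell'_{k}$ (which divides $L_m$ for every $m>j$ and also appears in $L_j=\ell_j\,\ell'_{j+1}\cdots\ell'_k$), one is left with a clean expression. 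Substituting back, the two contributions combine and one uses the single-index B\'ezout identity (\ref{bezout}), in the form (\ref{queue}),
$$
\frac{\ell'_{j}}{e'_{j}}+\frac{\ell_{j}h_{j}}{e_{j}}=\frac{d_{j}}{e'_{j}e_{j}},
$$
together with $L_j=\ell_j\,\ell'_{j+1}\cdots\ell'_k$ and $d_j = \gcd(e_j,e'_j)$, to collapse everything to $\ell_j e'_j$.

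The main obstacle I anticipate is bookkeeping rather than anything deep: one must keep straight the factor $\ell'_{j+1}\cdots\ell'_k$ shared between $L_j$ and all the $L_m$ with $m>j$, and carefully isolate from (\ref{eqL's}) exactly the partial tail $\sum_{m>j}$ rather than the full sum or an initial segment. A clean way to organize this is to prove the scalar identity
$$
L_j\,\frac{d_j}{e_j} + \ell_j e'_j \sum_{m>j} L_m\,\frac{h_{m}}{e_{m}} = \frac{\ell_j e'_j}{e_i}
$$
for each $j$ by descending induction on $j$ (base case $j=k$, where the sum is empty and the claim reduces to $d_k/e_k$ times $L_k=\ell_k$ equalling $\ell_k e'_k/e_i$, i.e. to $d_k e'_k = e_i$ via Lemma \ref{lcm}); the inductive step peels off the $m=j+1$ term and again invokes (\ref{queue}). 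Multiplying through by $e_i$ gives the $j$-th component of the asserted vector identity, and since $j$ was arbitrary this completes the proof.
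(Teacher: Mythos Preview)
Your plan is essentially the paper's: both compute the $j$-th entry of $(L_{1}\,\dots\,L_{k})\,Q$, arrive at the scalar identity
\[
L_j\,\frac{d_j}{e_j}+\ell_j e'_j\sum_{m>j}L_m\,\frac{h_m}{e_m}=\frac{\ell_j e'_j}{e_i},
\]
and reduce it to (\ref{eqL's}). The paper does this by dividing through by $\ell_j e'_j$ (the case $\ell_j=0$ being trivial, since then the $j$-th column of $Q$ is diagonal and $L_j=0$) and invoking Lemma~\ref{LH} to rewrite what remains precisely as (\ref{eqL's}); your first route, subtracting two instances of (\ref{eqL's}) to isolate the tail $\sum_{m>j}L_m h_m/e_m$, is the same computation in different clothing.

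One caution about your descending-induction variant: the hypothesis at step $j{+}1$ reads
\[
L_{j+1}\,\frac{d_{j+1}}{e_{j+1}}+\ell_{j+1}e'_{j+1}\sum_{m>j+1}L_m\,\frac{h_m}{e_m}=\frac{\ell_{j+1}e'_{j+1}}{e_i},
\]
and if $\ell_{j+1}=0$ (which (\ref{bezout}) permits whenever $e_{j+1}\mid e'_{j+1}$) this collapses to $0=0$ and no longer determines the tail $\sum_{m>j+1}L_m h_m/e_m$. So you cannot pass from $j{+}1$ to $j$ as described. The fix is either to run the induction instead on the tail-sum identity
\[
\sum_{m>j}L_m\,\frac{h_m}{e_m}=\frac{1}{e_i}-\frac{\ell'_{j+1}\cdots\ell'_k}{e'_{j+1}},
\]
whose inductive step via (\ref{queue}) has no such degeneracy, or simply to stick with your first route via (\ref{eqL's}), which is what the paper does.
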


\begin{proof}
The statement is equivalent to the following identity:
$$
L_{i,j}\, \dfrac{d_{i,j}}{e_{i,j}} + L_{i,j+1}\, q_{j+1,j} + \cdots + L_{i,k }\, q_{k,j} = \dfrac{1}{e_i} \, \ell_{i,j} e_{i,j}',\quad  1 \leq j \leq k,
$$
where $q_{m,j}=\ell_{i,j}e'_{i,j}h_{i,m}/e_{i,m}$ are the entries of $Q$ for $m>j$. Equivalently,
$$
L_{i,j} \ \dfrac{d_{i,j}}{\ell_{i,j}e_{i,j} e_{i,j}'} + L_{i,j+1} \ \dfrac{h_{i,j+1}}{e_{i,j+1}} + \cdots + L_{i,k } \ \dfrac{h_{i,k}}{e_{i,k}} = \dfrac{1}{e_i}.
$$

By Lemma \ref{LH}, this is equivalent to 
$$
L'_i+L_{i,1} \, \dfrac{h_{i,1}}{e_{i,1}} + \cdots + L_{i,j} \,  \dfrac{h_{i,j}}{e_{i,j}}= L_{i,j} \, \dfrac{d_{i,j}}{l_{i,j}e_{i,j} e_{i,j}'} = \ell_{i,j+1}' \cdots  \ell_{i,k}' \, \dfrac{1}{e_{i,j+1}'},$$
which was proven in (\ref{eqL's}).
\end{proof}

Let us rewrite the identities in Definition \ref{ratfs} by using formal logarithms:
\begin{equation}\label{log}
\left(\log \pi_{i+1,1} \cdots\, \log \pi_{i+1,k}\right)=\log\phi_i\,\left(\ell_{i,1}e'_{i,1}\,\cdots\,\ell_{i,k}e'_{i,k}\right)
+\left(\log \pi_{i,1} \cdots \,\log \pi_{i,k}\right)A,
\end{equation}
where $A=(a_{m,j})\in\Q^{k\times k}$ is the matrix with entries:
$$ 
a_{m,j}= 
\begin{cases}
    0,& \text{if } m>j\\
    \ell_{i,j}' , & \text{if } m=j \\
    -\ell_{i,j} e_{i,j}' h_{i,m}/e_{i,m} , & \text{if } m<j
\end{cases}.
$$

\begin{lemma}\label{vector} Let $0\le i \le r$. Consider the  matrix $B=\mathbf{u}\,(L_{i,1} \dots L_{i,k})\in\Q^{k\times k}$, where $\mathbf{u}$ is the column-vector defined in (\ref{u}).
\begin{enumerate}
\item[(1)] If $\alpha=(\iota_{i,1}\,\cdots\,\iota_{i,k})\,\mathbf{v}$, for some  $\mathbf{v}\in\Q^{k\times 1}$, then $B\mathbf{v}=L_i(\mathbf{\alpha})\mathbf{u}$. 
\item[(2)] $A=(I-e_iB)Q$.
\item[(3)] The vector $\mathbf{u}$ is an eigenvector of the matrix $I-e_iB$, with eigenvalue $e_iL'_i$.
\end{enumerate}
\end{lemma}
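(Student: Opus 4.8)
\textbf{Proof plan for Lemma \ref{vector}.}

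The plan is to verify the three statements in order, using the definitions of $L_i$, $\mathbf{u}$, $A$, $B$, $Q$ together with the previous lemmas. For part (1), I would simply unwind the definition of the function $L_i$: if $\alpha=(\iota_{i,1}\cdots\iota_{i,k})\mathbf{v}$ then by definition $L_i(\alpha)=(L_{i,1}\cdots L_{i,k})\mathbf{v}$, which is a scalar. On the other hand, $B\mathbf{v}=\mathbf{u}\,(L_{i,1}\cdots L_{i,k})\,\mathbf{v}=\mathbf{u}\,L_i(\alpha)=L_i(\alpha)\mathbf{u}$, using associativity of the matrix products (the rank-one factorization $B=\mathbf{u}\,(L_{i,1}\cdots L_{i,k})$ does all the work). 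So (1) is essentially immediate from reading off the definitions.

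For part (2), the claim $A=(I-e_iB)Q$ should be proven entrywise, or better, by splitting $A$ into its diagonal/strictly-lower-triangular parts and matching against $(I-e_iB)Q$. Writing $P=\di(e_{i,1},\dots,e_{i,k})Q$ as in Lemma \ref{basisGi}, one has explicit formulas for the entries of $Q$: diagonal entries $d_{i,j}/e_{i,j}$ and sub-diagonal entries $q_{m,j}=\ell_{i,j}e'_{i,j}h_{i,m}/e_{i,m}$ for $m>j$. The matrix $e_iB=e_i\mathbf{u}\,(L_{i,1}\cdots L_{i,k})$ is rank one with $(m,j)$-entry $e_i(h_{i,m}/e_{i,m})L_{i,j}$. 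So I would compute $(I-e_iB)Q$ column by column: the $j$-th column of $(I-e_iB)Q$ equals the $j$-th column of $Q$ minus $e_iB$ times the $j$-th column of $Q$. The key input is Lemma \ref{LQ}, which says $e_i(L_{i,1}\cdots L_{i,k})Q=(\ell_{i,1}e'_{i,1}\cdots\ell_{i,k}e'_{i,k})$; multiplying $\mathbf{u}$ on the left by this row vector (after noting $e_iB\cdot(\text{col }j\text{ of }Q)=\mathbf{u}\cdot[(L_{i,1}\cdots L_{i,k})e_i Q]_j\cdot$ rearranged) produces exactly the correction terms $-\ell_{i,j}e'_{i,j}h_{i,m}/e_{i,m}$ needed to turn the entries of $Q$ into the entries of $A$. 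One checks the diagonal entry becomes $d_{i,j}/e_{i,j}-\ell_{i,j}e'_{i,j}h_{i,j}/e_{i,j}=\ell'_{i,j}$ via the B\'ezout identity (\ref{bezout}), and for $m>j$ the entry becomes $q_{m,j}-q_{m,j}=0$, while for $m<j$ it becomes $0-\ell_{i,j}e'_{i,j}h_{i,m}/e_{i,m}$, matching $A$. So part (2) reduces to careful bookkeeping plus Lemma \ref{LQ} and (\ref{bezout}).

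For part (3), I would compute $(I-e_iB)\mathbf{u}=\mathbf{u}-e_i\mathbf{u}\,(L_{i,1}\cdots L_{i,k})\mathbf{u}=\mathbf{u}\bigl(1-e_i\,(L_{i,1}\cdots L_{i,k})\mathbf{u}\bigr)$. Since $\ga_i=(\iota_{i,1}\cdots\iota_{i,k})\mathbf{u}$ by (\ref{u}), part (1) gives $(L_{i,1}\cdots L_{i,k})\mathbf{u}=L_i(\ga_i)$, so the scalar factor is $1-e_iL_i(\ga_i)$. By Lemma \ref{LH}, $L'_i+L_i(\ga_i)=1/e_i$, hence $1-e_iL_i(\ga_i)=e_iL'_i$, giving eigenvalue $e_iL'_i$ as claimed. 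The main obstacle, such as it is, will be part (2): getting the indexing of the strictly-lower-triangular entries of $A$ and the rank-one matrix $e_iB$ to line up correctly against the columns of $Q$, and invoking Lemma \ref{LQ} in the precisely correct transposed/arranged form. Parts (1) and (3) are short once (1) is in place.
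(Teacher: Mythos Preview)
Your proposal is correct and follows essentially the same route as the paper: part (1) is immediate from the rank-one factorization of $B$ and the definition of $L_i$; part (2) uses Lemma \ref{LQ} to rewrite $(I-e_iB)Q=Q-\mathbf{u}\,(\ell_{i,1}e'_{i,1}\cdots\ell_{i,k}e'_{i,k})$ and then checks the entries against those of $A$ via the B\'ezout identity (\ref{bezout}); part (3) combines (1) with Lemma \ref{LH}. One small slip: $A$ is upper triangular (zero for $m>j$), not lower triangular, but your actual case-by-case computation handles $m>j$, $m=j$, $m<j$ correctly.
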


\begin{proof} 
Item (1) follows from the definition of the operator $L_i$.
By Lemma \ref{LQ}, 
$$(I-e_iB)Q=Q-e_i\mathbf{u}\,\left(L_{i,1} \dots L_{i,k}\right)Q=Q-\mathbf{u}\left(\ell_{i,1}e'_{i,1} \dots \ell_{i,k}e'_{i,k}\right).$$
Hence, item (2) is equivalent to 
$$q_{m,j} - a_{m,j}= \ell_{i,j} e_{i,j}'h_{i,m}/e_{i,m}, \quad \forall, m,j.$$
If $m>j$ ($a_{m,j} = 0$), or $m<j$ ($q_{m,j} = 0$), the identity is obvious. 
If $m=j$, it follows from $\ell_{i,m} h_{i,m} e_{i,m}'= d_{i,m} - \ell_{i,m}' e_{i,m}$.
Finally, 
$$(I-e_iB)\mathbf{u} = \mathbf{u} - e_i L_i(\ga_i)\mathbf{u}= e_iL'_i\,\mathbf{u},$$
by the first item and Lemma \ref{LH}. 
\end{proof}

\begin{proposition}\label{gammaL0} For all \ $0\leq i \leq r$, we have \ $\phi_i / \pi_{i+1}^{\ga_i} = Y_i^{L'_i}$.
\end{proposition}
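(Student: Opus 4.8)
The plan is to pass to formal logarithms, turning the claimed multiplicative identity into a linear one over $\Q$, and then to feed it the three technical lemmas \ref{LH}, \ref{LQ} and \ref{vector} that were assembled for exactly this purpose.

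First I would fix $0\le i\le r$ and write $\ga_i$ in the basis $\iota_{i+1,1},\dots,\iota_{i+1,k}$ of $\g_i$ provided by Lemma \ref{basisGi}, say $\ga_i=(\iota_{i+1,1}\cdots\iota_{i+1,k})\,\mathbf{n}$ with $\mathbf{n}\in\Z^{k\times 1}$; by the definition of $\pi_{i+1}^{\ga_i}$ this gives $\pi_{i+1}^{\ga_i}=\pi_{i+1,1}^{n_1}\cdots\pi_{i+1,k}^{n_k}$. Comparing with $\ga_i=(\iota_{i,1}\cdots\iota_{i,k})\,\mathbf{u}$ through the change-of-basis relation $(\iota_{i+1,1}\cdots\iota_{i+1,k})=(\iota_{i,1}\cdots\iota_{i,k})\,Q$ and using linear independence of $\iota_{i,1},\dots,\iota_{i,k}$, one records the key relation $Q\mathbf{n}=\mathbf{u}$.

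Next I would multiply the formal-logarithm identity (\ref{log}) on the right by $\mathbf{n}$, obtaining
$$
\log\left(\pi_{i+1}^{\ga_i}\right)
=\big((\ell_{i,1}e'_{i,1}\cdots\ell_{i,k}e'_{i,k})\,\mathbf{n}\big)\,\log\phi_i
+(\log\pi_{i,1}\cdots\log\pi_{i,k})\,A\mathbf{n}.
$$
Lemma \ref{LQ} together with $Q\mathbf{n}=\mathbf{u}$ turns the first coefficient into $e_i\,L_i(\ga_i)$, while Lemma \ref{vector}(2)--(3) turns $A\mathbf{n}=(I-e_iB)Q\mathbf{n}=(I-e_iB)\mathbf{u}$ into $e_iL'_i\,\mathbf{u}$. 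Subtracting from $\log\phi_i$ and applying Lemma \ref{LH} in the form $1-e_iL_i(\ga_i)=e_iL'_i$, the right-hand side collapses to $L'_i\big(e_i\log\phi_i-e_i(\log\pi_{i,1}\cdots\log\pi_{i,k})\mathbf{u}\big)$, which is $L'_i\log Y_i$ by the definition $Y_i=\phi_i^{e_i}\pi_i^{-h_i}$ and $h_i=e_i\ga_i=(\iota_{i,1}\cdots\iota_{i,k})(e_i\mathbf{u})$. Exponentiating yields the proposition.

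The only step I expect to require genuine care — as opposed to mechanical matrix algebra — is legitimizing the logarithmic calculus: one must check that all the rational functions in play ($\phi_i$, the $\pi_{i,j}$, $\pi_{i+1}^{\ga_i}$, $Y_i$ and $Y_i^{L'_i}$) lie in the free abelian subgroup of $K(x)^*$ generated by $K^*$ and $\phi_0,\dots,\phi_i$, the relevant exponents $\ell_{i,j}e'_{i,j}h_{i,t}/e_{i,t}$ and $e_ih_{i,t}/e_{i,t}$ being integers by (\ref{EE}) and Lemma \ref{lcm}, so that the identity may be verified after tensoring this group with $\Q$, where the manipulations above are meaningful. Everything else has already been isolated into Lemmas \ref{LH}, \ref{LQ}, \ref{vector}, so once the logarithmic framework is in place the argument is just the chain of substitutions described.
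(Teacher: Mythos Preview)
Your proposal is correct and follows essentially the same route as the paper: both pass to formal logarithms, multiply (\ref{log}) on the right by the coordinate vector of $\ga_i$ in the new basis (you call it $\mathbf{n}$, the paper writes it as $Q^{-1}\mathbf{u}$), and then invoke Lemmas \ref{LH}, \ref{LQ}, \ref{vector} in exactly the same order to collapse the expression to $L'_i\log Y_i$. Your explicit remark on why the logarithmic calculus is legitimate is a useful addition that the paper leaves implicit.
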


\begin{proof}
By (\ref{log}) and Lemmas \ref{LH}, \ref{LQ} and \ref{vector},
\begin{align*}
\log\left( \phi_i/\pi_{i+1}^{\ga_i}\right)  =&\; \log\phi_i- \left(\log \pi_{i+1,1} \cdots \,\log \pi_{i+1,k}\right) \,Q^{-1} \mathbf{u} \\ 
=&\;  \left(1-\left(\ell_{i,1}e'_{i,1}\,\cdots\,\ell_{i,k}e'_{i,k}\right)\,Q^{-1}\mathbf{u}\right)\log \phi_i - \left(\log \pi_{i,1} \cdots \,\log \pi_{i,k}\right) \,AQ^{-1} \mathbf{u}\\
=&\;  (1-e_iL_i(\ga_i))\log \phi_i -\left(\log \pi_{i,1} \cdots\, \log \pi_{i,k}\right)(I-e_iB)\mathbf{u} \\
=&\;\log \phi_i^{e_iL'_i} -\left(\log \pi_{i,1} \cdots \,\log \pi_{i,k}\right)\,e_iL'_i\mathbf{u}=\log Y_i^{L'_i},
\end{align*}
because $\left(\log \pi_{i,1} \cdots \log \pi_{i,k}\right)\,e_i\mathbf{u}=\log \pi_i^{e_i\ga_i}=\log \pi_i^{h_i}$.
\end{proof}

The restriction of $\pi_{i+1}$ to the subgroup $\g_{i-1}\subset\g_i$ does not coincide with $\pi_i$. The next result computes the quotient of these two homomorphisms on $\g_{i-1}$.

\begin{proposition}\label{quotbeta} For all $0\le i\le r$ and $\beta \in \g_{i-1}$, we have
$\pi^{\beta}_{i+1}/\pi^{\beta}_i= Y_i^{L_i(\beta)}$.
\end{proposition}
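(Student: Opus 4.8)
The plan is to re-run the formal-logarithm computation from the proof of Proposition \ref{gammaL0}, but carrying an arbitrary element $\beta\in\g_{i-1}$ through the matrices in place of the special element $\ga_i$. Fix $\beta\in\g_{i-1}$ and write $\beta=(\iota_{i,1}\cdots\iota_{i,k})\mathbf{v}$ with $\mathbf{v}\in\Z^{k\times 1}$; by Lemma \ref{basisGi} we then have $\beta=(\iota_{i+1,1}\cdots\iota_{i+1,k})\,Q^{-1}\mathbf{v}$, so that $\log(\pi_i^\beta)=(\log\pi_{i,1}\cdots\log\pi_{i,k})\mathbf{v}$ and $\log(\pi_{i+1}^\beta)=(\log\pi_{i+1,1}\cdots\log\pi_{i+1,k})\,Q^{-1}\mathbf{v}$, in terms of the formal logarithms of (\ref{log}).

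Next I would substitute (\ref{log}) into the last expression and simplify its two terms. For the term with $\log\phi_i$, Lemma \ref{LQ} gives $(\ell_{i,1}e'_{i,1}\cdots\ell_{i,k}e'_{i,k})\,Q^{-1}=e_i(L_{i,1}\cdots L_{i,k})$, so its coefficient becomes $e_i(L_{i,1}\cdots L_{i,k})\mathbf{v}=e_iL_i(\beta)$ by the definition of $L_i$. For the other term, Lemma \ref{vector}(2) gives $AQ^{-1}=I-e_iB$, and Lemma \ref{vector}(1) gives $B\mathbf{v}=L_i(\beta)\mathbf{u}$; since $h_i=e_i\ga_i$, the coordinate vector of $h_i$ in the basis $\iota_{i,1},\dots,\iota_{i,k}$ is $e_i\mathbf{u}$, whence $(\log\pi_{i,1}\cdots\log\pi_{i,k})\,e_i\mathbf{u}=\log\pi_i^{h_i}$, the identity already invoked at the end of the proof of Proposition \ref{gammaL0}. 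Putting the pieces together, $\log(\pi_{i+1}^\beta)-\log(\pi_i^\beta)=L_i(\beta)\bigl(e_i\log\phi_i-\log\pi_i^{h_i}\bigr)=L_i(\beta)\log Y_i$, which is the claimed identity after exponentiating and recalling $Y_i=\phi_i^{e_i}\pi_i^{-h_i}$.

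The computation is purely formal once the indexing is pinned down, so I do not expect a genuine obstacle; the only delicate point is bookkeeping — keeping straight that $\mathbf{v}$ is the coordinate vector of $\beta$ in the $\g_{i-1}$-basis $\iota_{i,\bullet}$ while $Q^{-1}\mathbf{v}$ is its coordinate vector in the $\g_i$-basis $\iota_{i+1,\bullet}$, and applying (\ref{log}) on the correct side, exactly as in Proposition \ref{gammaL0} with $\ga_i$ there replaced by $\beta$ here. It is also worth remarking that, although $L_i(\beta)$ is only a rational number, the symbol $Y_i^{L_i(\beta)}$ is unambiguous: the left-hand side $\pi_{i+1}^\beta/\pi_i^\beta$ is a genuine element of $K(x)^*$, and the formal-logarithm identity above determines it. (As a consistency check, both $\beta\mapsto\pi_{i+1}^\beta/\pi_i^\beta$ and $\beta\mapsto Y_i^{L_i(\beta)}$ are homomorphisms on $\g_{i-1}$, so it would in fact suffice to verify the identity on the basis $\iota_{i,1},\dots,\iota_{i,k}$.)
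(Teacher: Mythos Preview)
Your proof is correct and follows essentially the same route as the paper's: both write $\beta$ in the basis $\iota_{i,\bullet}$, convert to the basis $\iota_{i+1,\bullet}$ via $Q^{-1}$, substitute (\ref{log}), and simplify the two resulting terms using Lemmas \ref{LQ} and \ref{vector} exactly as you describe. One small remark: your closing caveat about $L_i(\beta)$ being ``only a rational number'' is unnecessary here, since for $\beta\in\g_{i-1}$ the coordinate vector $\mathbf{v}$ is integral and the $L_{i,j}$ are integers, so $L_i(\beta)\in\Z$ and $Y_i^{L_i(\beta)}$ is a genuine element of $K(x)^*$.
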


\begin{proof} 
Let $\mathbf{b}$ be the column vector determined by $\beta=(\iota_{i,1} \cdots \iota_{i,k})\mathbf{b}$. 

By (\ref{log}) and Lemmas \ref{LQ} and \ref{vector},
\begin{align*}
\log \left(\pi_{i+1}^{\beta}/\pi^{\beta}_i\right) =&\;(\log \pi_{i+1,1} \cdots \log \pi_{i+1,k})\,Q^{-1} \mathbf{b}-\left(\log \pi_{i,1} \cdots \log \pi_{i,k}\right)\,\mathbf{b}\\
=&\;\log\phi_i\,\left(\ell_{i,1}e'_{i,1}\,\cdots\,\ell_{i,k}e'_{i,k}\right)\,Q^{-1} \mathbf{b}
+\left(\log \pi_{i,1} \cdots \log \pi_{i,k}\right)\,(AQ^{-1}-I) \mathbf{b}
\\
=&\;e_iL_i(\beta)\log \phi_i-\left(\log \pi_{i,1} \cdots \log \pi_{i,k}\right)\,e_iB 
\mathbf{b}\\
=&\;\log \phi_i^{e_iL_i(\beta)}-\left(\log \pi_{i,1} \cdots \log \pi_{i,k}\right)\,e_iL_i(\beta)\mathbf{u}=\log Y_i^{L_i(\beta)},
\end{align*}
because $\left(\log \pi_{i,1} \cdots \log \pi_{i,k}\right)\,e_i\mathbf{u}=\log \pi_i^{e_i\ga_i}=\log \pi_i^{h_i}$.
\end{proof}

The next result follows immediately from Propositions \ref{gammaL0} and \ref{quotbeta}.

\begin{proposition}\label{recurrence}
Let $(s,u)\in \Z_{\ge0}\times \g_{r-1}$. Then, $\phi_{r}^{s}\pi_{r}^{u}/\pi_{r+1}^{u+s\ga_r}=\left(Y_{r}\right)^{L'_{r}\,s - L_{r}\left(u\right)}$.
\end{proposition}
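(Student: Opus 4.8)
The plan is to obtain the identity by a direct substitution, exactly in the spirit of the computations immediately preceding it, without introducing anything new. Since $\pi_{r+1}\colon\g_r\to K(x)^*$ is a group homomorphism and $u+s\ga_r\in\g_r=\gen{\g_{r-1},\ga_r}$ (here $u\in\g_{r-1}\subset\g_r$ and $\ga_r\in\g_r$, and $s\in\Z$), the factor $\pi_{r+1}^{u+s\ga_r}$ is defined and splits multiplicatively as $\pi_{r+1}^{u}\,\big(\pi_{r+1}^{\ga_r}\big)^{s}$. Thus the whole problem reduces to rewriting the two pieces $\pi_{r+1}^{u}$ and $\pi_{r+1}^{\ga_r}$ in terms of $\phi_r$, $\pi_r$ and $Y_r$, which is precisely what Propositions \ref{gammaL0} and \ref{quotbeta} do.

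First I would apply Proposition \ref{quotbeta} with $i=r$ and $\beta=u\in\g_{r-1}$, which gives $\pi_{r+1}^{u}=\pi_r^{u}\,Y_r^{L_r(u)}$. Then I would apply Proposition \ref{gammaL0} with $i=r$, which reads $\phi_r/\pi_{r+1}^{\ga_r}=Y_r^{L'_r}$, hence $\pi_{r+1}^{\ga_r}=\phi_r\,Y_r^{-L'_r}$ and therefore $\big(\pi_{r+1}^{\ga_r}\big)^{s}=\phi_r^{s}\,Y_r^{-L'_r s}$. Multiplying the two, $\pi_{r+1}^{u+s\ga_r}=\pi_r^{u}\,\phi_r^{s}\,Y_r^{L_r(u)-L'_r s}$. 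Dividing $\phi_r^{s}\pi_r^{u}$ by this expression cancels the common factor $\phi_r^{s}\pi_r^{u}$ and leaves $Y_r^{L'_r s - L_r(u)}$, which is the assertion. Equivalently, one can do this additively by passing to formal logarithms and combining the identities of Propositions \ref{gammaL0} and \ref{quotbeta} linearly, as was done in their own proofs.

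There is essentially no obstacle here: the argument is two lines of group arithmetic in $K(x)^*$ (or, via formal logarithms, in a $\Q$-vector space). The only points worth a moment's care are purely formal — checking that $u+s\ga_r$ indeed lies in the domain $\g_r$ of $\pi_{r+1}$, so that the symbol makes sense, and observing that the hypothesis $s\ge0$ is not used in this manipulation; it is imposed only because $s$ will subsequently index the exponents occurring in $\phi_r$-expansions.
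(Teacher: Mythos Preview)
Your proof is correct and follows exactly the approach the paper indicates: it derives the identity immediately from Propositions \ref{gammaL0} and \ref{quotbeta} by splitting $\pi_{r+1}^{u+s\ga_r}$ multiplicatively and substituting. The paper does not spell out the two-line computation, but what you wrote is precisely what ``follows immediately'' means here.
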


\subsection*{Images in $\ggm$ of the rational functions}
By (\ref{phis}), the rational functions  $Y_i,\, \pi_{i,j}$ introduced in Definition \ref{ratfs} are the product of some $a\in K^*$ by powers of $\phi_0,\dots,\phi_i$ with integer exponents. The exponent of $\phi_i$ is   $e_i$ and $0$, respectively. 

For $0\le j< i$, the exponent of $\phi_j$ may be negative, but the element $H_{\mu_{i}}(\phi_j)$ is a unit in $\gg_{\mu_i}$, by Proposition \ref{extension}.
Therefore, it makes sense to consider the image in the graded algebra of these rational functions, and the image of $\pi_{i,j}$ will be a unit.

\begin{definition}\label{ratfsGr}
For $0\le i\le r$, $1\leq j \leq k$, we define  
$$
x_i=H_{\mu_i}(\phi_i)\in \gg_{\mu_i}, \qquad y_i=H_{\mu_i}(Y_i)\in\Delta_i,\qquad p_{i,j}=H_{\mu_{i}}(\pi_{i,j})\in \gg_{\mu_i}^*.
$$
 
Also, we define group homomorphisms:
$$
p_i\colon\g_{i-1}\lra \gg_{\mu_i}^*,\qquad \alpha\longmapsto p_i^\alpha=H_{\mu_i}(\pi_i^\alpha)=p_{i,1}^{n_1}\cdots p_{i,k}^{n_k}, 
$$
if $\alpha=n_1\iota_{i,1}+\cdots+n_k\iota_{i,k}\in \g_{i-1}$, with $n_1,\dots,n_k\in\Z$.
\end{definition}


\begin{lemma}\label{j}
Let $(s,u),\,(s',u')\in\Z_{\ge0}\times\g_{\mu_{r-1}}$ such that $u+s\ga_r=u'+s'\ga_r$. 

Then,  there exists $j\in\Z$ such that
$$
s'=s+je_r,\quad u'=u-jh_r,\quad x_r^{s'}\,p_r^{u'}=x_r^s\,p_r^u\,y_r^j.
$$
\end{lemma}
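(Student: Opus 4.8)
The statement asserts that the pairs $(s,u)\in\Z_{\ge0}\times\g_{\mu_{r-1}}$ representing a fixed element $w:=u+s\ga_r\in\g_{\mu_r}$ are parametrized by a single integer $j$, and that the corresponding monomials $x_r^s\,p_r^u$ in $\gg_{\mu_r}$ differ by the factor $y_r^j$. The first task is purely group-theoretic: I would determine all ways of writing $w=u+s\ga_r$ with $u\in\g_{\mu_{r-1}}$, $s\ge0$. If $(s,u)$ and $(s',u')$ both work, then $(s'-s)\ga_r=u-u'\in\g_{\mu_{r-1}}$, so by the defining property $e_r\Z=\{e\in\Z\mid e\ga_r\in\g_{\mu_{r-1}}\}$ (recall $e_r=(\g_{\mu_r}\colon\g_{\mu_{r-1}})$) we get $s'-s=je_r$ for some $j\in\Z$; then $u'=u-(s'-s)\ga_r=u-je_r\ga_r=u-jh_r$, using $h_r=e_r\ga_r\in\g_{\mu_{r-1}}$. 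This gives the first two displayed equalities. (Note $j$ may be negative, but that is fine, since nothing forces $s'\le s$.)

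**The graded identity.** For the third equality I would work in $\gg_{\mu_r}$ and use the relation $y_r=H_{\mu_r}(Y_r)$ with $Y_r=\phi_r^{e_r}\pi_r^{-h_r}$ from Definition \ref{ratfs}, together with the multiplicativity of $H_{\mu_r}$. Since $j$ can have either sign, I must be careful that $x_r=H_{\mu_r}(\phi_r)$ is not a unit while $y_r$ and each $p_{r,j}$ are units in $\gg_{\mu_r}$ (the latter by Proposition \ref{extension}, applied as in the discussion preceding Definition \ref{ratfsGr}; for $y_r$, it is a degree-zero unit by Theorem \ref{kstructure}). Because $y_r$ is invertible, the desired identity $x_r^{s'}p_r^{u'}=x_r^s p_r^u y_r^{j}$ is equivalent, after cancelling $x_r^{\min(s,s')}$ and reorganizing, to a statement about units; concretely, writing $s'=s+je_r$ and $u'=u-jh_r$, the claim reduces to
\[
x_r^{je_r}\,p_r^{-jh_r}=y_r^{j},
\]
valid for all $j\in\Z$ in the unit group, which in turn is just the $j$-th power of $x_r^{e_r}p_r^{-h_r}=y_r$. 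The latter is exactly $H_{\mu_r}$ applied to the identity of rational functions $\phi_r^{e_r}\pi_r^{-h_r}=Y_r$, using that $H_{\mu_r}$ is multiplicative and that $H_{\mu_r}(\pi_r^{h_r})=p_r^{h_r}$ by the definition of the homomorphism $p_r$. When $j\ge0$ one reads the identity as $x_r^{s'}p_r^{u'}=x_r^s\,(x_r^{e_r}p_r^{-h_r})^{j}\,p_r^u=x_r^s p_r^u y_r^{j}$; when $j<0$ one multiplies both sides by $y_r^{-j}$ first, or equivalently absorbs $x_r^{-je_r}$ into the left side, so that only nonnegative powers of $x_r$ occur.

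**Main obstacle.** The only subtlety is the sign of $j$: since $x_r$ is a zerodivisor-free but non-invertible homogeneous element, one cannot literally write $x_r^{je_r}$ when $j<0$. I expect the cleanest route is to avoid negative powers of $x_r$ altogether by treating the two cases $s\le s'$ and $s'<s$ symmetrically, in each case peeling off the common factor $x_r^{\min(s,s')}$ and reducing to the unit identity $y_r=x_r^{e_r}p_r^{-h_r}$ raised to the power $|j|$. Everything else — the group computation, multiplicativity of $H_{\mu_r}$, and the fact that $p_r$ and $y_r$ land in $\gg_{\mu_r}^*$ — is immediate from the results already established.
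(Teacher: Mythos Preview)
Your approach is the same as the paper's: from $(s'-s)\ga_r=u-u'\in\g_{\mu_{r-1}}$ and the definition of $e_r$ you get $s'-s=je_r$, hence $u'=u-jh_r$, and then the graded identity follows from $y_r=x_r^{e_r}p_r^{-h_r}$. The paper's proof is exactly these three lines.

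One slip to correct: $y_r$ is \emph{not} a unit in $\gg_{\mu_r}$. Theorem~\ref{kstructure} says $\Delta=\ka_r[y_r]$ is a polynomial ring with $y_r$ transcendental, so $y_r\notin\Delta^*=\ka_r^*$. This is consistent with your own observation that $x_r$ is non-invertible: since $p_r^{-h_r}$ is a unit, $y_r=x_r^{e_r}p_r^{-h_r}$ is a unit iff $x_r$ is. Hence the expression $y_r^j$ for $j<0$ does not literally live in $\gg_{\mu_r}$, and your ``reduce to a unit identity'' phrasing is not quite right. Your case-by-case fix in the last paragraph is the correct way to read the statement: for $j\ge0$ compute directly $x_r^{s'}p_r^{u'}=x_r^{s+je_r}p_r^{u-jh_r}=x_r^s p_r^u(x_r^{e_r}p_r^{-h_r})^j=x_r^s p_r^u y_r^j$; for $j<0$ swap the roles of $(s,u)$ and $(s',u')$, or equivalently interpret the identity after multiplying both sides by $y_r^{-j}$, which is legitimate since $\gg_{\mu_r}$ is an integral domain.
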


\begin{proof}
From $(s'-s)\ga_r=u-u'\in\g_{\mu_{r-1}}$, we deduce $s'-s=je_r$ for some $j\in\Z$. Then, $u'=u-je_r\ga_r=u-jh_r$.
The lemma follows then from  $y_r=x_r^{e_r}\,p_r^{-h_r}$.
\end{proof}
\begin{definition}\label{xyz}
Suppose $0\le i<r$ and let $\alpha\in\g_{i-1}$. Since $\phi_{i+1}\nmid_{\mu_i}\phi_i$, Proposition \ref{extension} shows that the elements  $x_i,\,y_i,\,p_i^\alpha\in\gg_{\mu_i}$ are mapped to units in $\gg_{\mu_j}$, for all $j>i$, under the canonical homomorphism.  We denote these images respectively by  $$x_i\in\gg_{\mu_j}^*,\qquad z_i\in\Delta_j^*=\ka_j^*,\qquad p_i^\alpha\in\gg_{\mu_j}^*.$$ 
\end{definition}

\begin{lemma}\label{degpsi}
If $0\le i<r$, then $ \ka_{i+1}= \ka_i[z_i]= k[z_0,\dots,z_i]$.
\end{lemma}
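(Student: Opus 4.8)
The plan is to identify $\ka_{i+1}$ with the image of $\Delta_i$ in $\Delta_{i+1}$, use Theorem \ref{kstructure} to get a concrete generator of $\Delta_i$ over $\ka_i$, and then track what that generator maps to under the canonical homomorphism $\Delta_i\to\Delta_{i+1}$. Concretely, by Lemma \ref{imagedelta}, $\ka_{i+1}=\im(\Delta_i\to\Delta_{i+1})$. By Theorem \ref{kstructure} applied to $\mu_i$ (taking $\phi_i\in\kpm[\mu_i]$ of minimal degree, which it is by Proposition \ref{extension}(c), with $e=e_i$ and the unit $\epsilon_i$ constructed from $\pi_i^{-h_i}$), we have $\Delta_i=\ka_i[\xi_i]$ where $\xi_i=H_{\mu_i}(\phi_i)^{e_i}\epsilon_i = y_i$, the element of Definition \ref{ratfsGr}. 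So $\Delta_i=\ka_i[y_i]$, and applying the ring homomorphism $\Delta_i\to\Delta_{i+1}$ — which is the identity on $\ka_i$ and sends $y_i\mapsto z_i$ (this is exactly the notation fixed in Definition \ref{xyz}, since $\phi_{i+1}\nmid_{\mu_i}\phi_i$ in a MacLane chain) — gives $\ka_{i+1}=\ka_i[z_i]$.

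First I would spell out that $\phi_i$ has minimal degree among key polynomials for $\mu_i$: by Proposition \ref{extension}(c), $\phi_i$ is a key polynomial for $\mu_i=[\mu_{i-1};\phi_i,\ga_i]$ of minimal degree (for $i=0$ one uses Corollary \ref{kpminf} together with $\mu_0=[\minf;\phi_0,\ga_0]$, so $\phi_0=x+a$ has degree $1$). Then I would check that $\epsilon_i:=p_i^{-h_i}=H_{\mu_i}(\pi_i^{-h_i})\in\gg_{\mu_i}^*$ is a unit of degree $-e_i\ga_i=-h_i$: it is a unit because each $p_{i,j}=H_{\mu_i}(\pi_{i,j})$ is a unit (as $\pi_{i,j}$ is, up to a constant in $K^*$, a monomial in $\phi_0,\dots,\phi_{i-1}$ with no factor of $\phi_i$, by \eqref{phis}, so Proposition \ref{extension} makes its image a unit), and its degree is $v_{\mu_i}(\pi_i^{-h_i}) = -h_i$ by Lemma \ref{values} (which gives $\mu_i(\pi_i^{\beta})=\beta$ for $\beta\in\g_{i-1}$, applied to $\beta = h_i \in \g_{i-1}$, using $\mu_i(\pi_i^\beta) = \mu_{i-1}(\pi_i^\beta)$ from \eqref{stability}). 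Hence $y_i = H_{\mu_i}(\phi_i)^{e_i}\epsilon_i = H_{\mu_i}(\phi_i^{e_i}\pi_i^{-h_i}) = H_{\mu_i}(Y_i)$ is precisely the transcendental generator furnished by Theorem \ref{kstructure}, so $\Delta_i=\ka_i[y_i]$.

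Next I would push this through the canonical homomorphism $\Delta_i\to\Delta_{i+1}$. It is a $k$-algebra homomorphism restricting to the identity on $\ka_i\subset\Delta_i$ (under the identifications of the subsection on the chain of homomorphisms between graded algebras), its image is $\ka_{i+1}$ by Lemma \ref{imagedelta}, and it sends the generator $y_i$ to the element denoted $z_i$ in Definition \ref{xyz}. Therefore $\ka_{i+1}=\ka_i[z_i]$. The second equality $\ka_i[z_i]=k[z_0,\dots,z_i]$ then follows by a trivial induction on $i$: it is $\ka_0=k$ for $i=-1$ (or $\ka_1=k[z_0]$ for $i=0$), and if $\ka_i=k[z_0,\dots,z_{i-1}]$ then $\ka_{i+1}=\ka_i[z_i]=k[z_0,\dots,z_{i-1}][z_i]=k[z_0,\dots,z_i]$; here one should note that $z_0,\dots,z_{i-1}$, being the images in $\Delta_{i+1}$ of elements of $\ka_i$, already lie in $\ka_i$, consistent with Definition \ref{xyz}.

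The only mildly delicate point — not really an obstacle — is bookkeeping the chain of identifications: making sure that "$z_i$" as named in Definition \ref{xyz} genuinely is the image of the Theorem \ref{kstructure} generator $y_i$ of $\Delta_i$, and that the homomorphism $\Delta_i\to\Delta_{i+1}$ is compatible with the inclusion $\ka_i\hookrightarrow\ka_{i+1}$ fixed earlier. Once those identifications are made explicit, the statement is immediate from Theorem \ref{kstructure} and Lemma \ref{imagedelta}.
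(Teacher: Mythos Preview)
Your proposal is correct and follows exactly the same approach as the paper: use Theorem \ref{kstructure} to write $\Delta_i=\ka_i[y_i]$, then apply the canonical homomorphism $\Delta_i\to\Delta_{i+1}$ (whose image is $\ka_{i+1}$ by definition and which sends $y_i\mapsto z_i$) to get $\ka_{i+1}=\ka_i[z_i]$. The paper's proof is a two-line version of what you wrote; your extra care in verifying that $y_i$ really is the Theorem \ref{kstructure} generator and in spelling out the induction for the second equality is all fine and consistent with the setup.
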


\begin{proof}
By Theorem \ref{kstructure},  $\Delta_i= \ka_i[y_i]$. Hence,  $ \ka_{i+1}=\im\left(\Delta_i\to\Delta_{i+1}\right)= \ka_i[z_i]$.  
\end{proof}

In optimal MacLane chains, the elements $x_i,p_i^\alpha,y_r,z_i\in\ggm$ are ``almost" independent of the chain. Their precise variation is analyzed in section \ref{secDepchain}.

\section{Newton polygons}\label{secNewton}
\pagestyle{headings}
In this section, we study the Newton polygon operator attached to some $\mu\in\V$ with respect to a key polynomial. 
We generalize the results of \cite[Sec.2]{ResidualIdeals} (where $\mu$ was assumed to have rank one), up to a different normalization of the Newton polygons.

\subsection{Newton polygon operator}\label{secNP}
Consider two points $P=(s,\alpha),\ Q=(t,\beta)$ in the $\Q$-vector space $\qgq$.
The segment joining $P$ and $Q$ is the subset
$$
S=\left\{(s,\alpha)+ \ep\left(t-s,\beta-\alpha\right)\mid\  \ep \in  \Q,\ 0\le \ep\le 1\right\}\subset\qgq.
$$
If $P=Q$, then $S=\{P\}$.
If $s\ne t$, this segment has a natural slope
$$
\slp(S)=(\beta-\alpha)/(t-s)\in\gq.
$$


A subset of $\qgq$ is \emph{convex} if it contains the segment joining any two points in the subset.
The \emph{convex hull} of a finite subset $\cc\subset \qgq$ is the smallest convex subset of $\qgq$ containing $\cc$. 

The border of this hull is a sequence of chained segments.  If the points in $\cc$ have different abscissas, the leftmost and rightmost points are joined by two different chains of segments along the border, called the \emph{upper} and \emph{lower} convex hull of $\cc$.\e

Let $\mu\colon \kx\to \gq\cup\{\infty\}$ be a valuation in the space $\V$.
The choice of a key polynomial $\phi$ for $\mu$ determines a \emph{Newton polygon operator}
$$
\nphm\colon\, \kx\lra \pset\left({\qgq}\right),
$$
where $\pset\left({\qgq}\right)$ is the set of subsets of the rational space $\qgq$. 

The Newton polygon of the zero polynomial is the empty set. If 
$$g=\sum\nolimits_{0\le s}a_s\phi^s,\qquad \deg(a_s)<\deg(\phi)$$ is the canonical $\phi$-expansion of a non-zero $g\in \kx$, then $N:=\nphm(g)$ is defined to be the lower convex hull of the finite cloud of points $$\cc=\left\{(s,\mu(a_s))\mid s\in\Z_{\ge0}\right\}\subset\qgq.$$

Thus, $N$ is either a single point or a chain of segments, called the \emph{sides} of the polygon. From left to right, these sides have increasing slopes. 

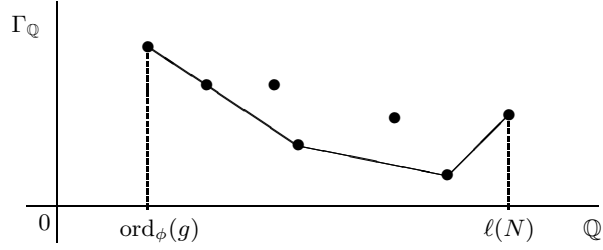
\begin{figure}
\caption{Newton polygon $N=\nphm(g)$ of $g\in \kx$}\label{figNmodel}
\begin{center}
\setlength{\unitlength}{4mm}
\begin{picture}(20,7.5)
\put(14.8,3.4){$\bullet$}\put(11,3.3){$\bullet$}\put(12.75,1.4){$\bullet$}\put(7,4.4){$\bullet$}
\put(7.8,2.4){$\bullet$}\put(4.75,4.4){$\bullet$}\put(2.8,5.65){$\bullet$}
\put(-1,0.6){\line(1,0){19}}\put(0,-0.6){\line(0,1){8}}
\put(8,2.6){\line(-3,2){5}}\put(8,2.63){\line(-3,2){5}}
\put(8,2.6){\line(5,-1){5}}\put(8,2.62){\line(5,-1){5}}
\put(13,1.6){\line(1,1){2}}\put(13,1.62){\line(1,1){2}}
\multiput(3,.5)(0,.25){22}{\vrule height2pt}
\multiput(15,.5)(0,.25){12}{\vrule height2pt}
\put(2.1,-0.4){\begin{footnotesize}$\ord_{\phi}(g)$\end{footnotesize}}
\put(14.2,-0.4){\begin{footnotesize}$\ell(N)$\end{footnotesize}}
\put(17.5,-0.4){\begin{footnotesize}$\Q$\end{footnotesize}}
\put(-1.5,6.4){\begin{footnotesize}$\gq$\end{footnotesize}}
\put(-.6,-.2){\begin{footnotesize}$0$\end{footnotesize}}
\end{picture}
\end{center}
\end{figure}

The abscissa of the left endpoint of $N$ is $s=\ord_{\phi}(g)$.  

The abscissa of the right endpoint of $N$ is called the \emph{length} of $N$, and is denoted:
$$
\ell(N)=\left\lfloor \deg(g)/\deg(\phi)\right\rfloor.
$$
The left and right endpoints of $N$, together with the points joining two different sides are called \emph{vertices} of $N$.

\begin{definition}\label{sla}
Let $g\in\kx$, $N=\nphm(g)$ and  $\la\in\gq$.

The \mbox{\emph{$\la$-component}} $S_\la(N)\subset N$ is the intersection of $N$ with the line of slope $-\la$ which first touches $N$ from below. In other  words,
$$S_\la(N)= \{(s,u)\in N\,\mid\, u+s\la\mbox{ is minimal}\,\}.$$

The abscissas of the endpoints of $S_\la(N)$ are denoted \ $s_{\mu,\phi,\la}(g)\le s'_{\mu,\phi,\la}(g)$. 
\end{definition}

If $N$ has a side $S$ of slope $-\la$, then $S_\la(N)=S$. Otherwise, $S_\la(N)$ is a vertex of $N$.  Figure \ref{figComponent0} illustrates both possibilities.

\begin{definition}\label{onesided}
We say that $N=\nphm(g)$ is \emph{one-sided} of slope $-\la$ if 
$$
N=S_\la(N),\qquad s_{\mu,\phi,\la}(g)=0,\qquad s'_{\mu,\phi,\la}(g)>0. 
$$
\end{definition}

\begin{figure}
\caption{$\la$-component of $\nphm(g)$. The line $L$ has slope $-\la$.}\label{figComponent0}
\begin{center}
\setlength{\unitlength}{4mm}
\begin{picture}(30,8.8)
\put(2.8,4.8){$\bullet$}\put(7.8,2.3){$\bullet$}
\put(-1,0.6){\line(1,0){13}}\put(0,-0.4){\line(0,1){9}}
\put(-1,7){\line(2,-1){11}}
\put(3,5){\line(-1,2){1.5}}\put(3,5.04){\line(-1,2){1.5}}
\put(3,5){\line(2,-1){5}}\put(3,5.04){\line(2,-1){5}}
\put(8,2.5){\line(4,-1){4}}\put(8,2.54){\line(4,-1){4}}
\multiput(3,.4)(0,.25){18}{\vrule height2pt}
\multiput(8,.4)(0,.25){8}{\vrule height2pt}
\put(7.3,-.3){\begin{footnotesize}$s'_{\mu,\phi,\la}(g)$\end{footnotesize}}
\put(2,-.3){\begin{footnotesize}$s_{\mu,\phi,\la}(g)$\end{footnotesize}}
\put(-1,7.4){\begin{footnotesize}$L$\end{footnotesize}}
\put(-.6,-.2){\begin{footnotesize}$0$\end{footnotesize}}
\put(3,7.2){\begin{footnotesize}$N=\nphm(g)$\end{footnotesize}}
\put(5.5,4){\begin{footnotesize}$S_\la(N)$\end{footnotesize}}
\put(20.8,5.4){$\bullet$}\put(23.8,3.3){$\bullet$}
\put(17,0.6){\line(1,0){13}}\put(18,-0.4){\line(0,1){9}}
\put(17,7.05){\line(2,-1){11}}
\put(24,3.6){\line(-3,2){3}}\put(24,3.64){\line(-3,2){3}}
\put(21,5.8){\line(-1,2){1}}\put(21,5.84){\line(-1,2){1}}
\put(24,3.55){\line(4,-1){4}}\put(24,3.59){\line(4,-1){4}}
\multiput(24,.5)(0,.25){12}{\vrule height2pt}
\put(20.3,-.3){\begin{footnotesize}$s_{\mu,\phi,\la}(g)=s'_{\mu,\phi,\la}(g)$\end{footnotesize}}
\put(17,7.3){\begin{footnotesize}$L$\end{footnotesize}}
\put(17.4,-.2){\begin{footnotesize}$0$\end{footnotesize}}
\put(22,7.1){\begin{footnotesize}$N=\nphm(g)$\end{footnotesize}}
\put(24,4){\begin{footnotesize}$S_\la(N)$\end{footnotesize}}
\end{picture}
\end{center}
\end{figure}

Since $\mu(g)=\mn\{\mu\left(a_s\phi^s\right)\mid s\ge0\}$, the next observation follows immediately.

\begin{remark}\label{mug}
For any non-zero $g\in\kx$, the value $\mu(g)\in\gq$ is the ordinate of the point where the vertical axis meets the line of slope $-\mu(\phi)$ containing the $\mu(\phi)$-component of the Newton polygon $\nphm(g)$. (see Figure \ref{figN-aug})
\end{remark}


Let $S$ be a side of $\nphm(g)$ with slope $\slp(S)<-\mu(\phi)$. Then, the augmented valuation $[\mu;\phi,-\slp(S)]$ contains relevant arihmetic information about $g$, with respect to $\phi$.

This motivates the next definition.

\begin{definition}
The \emph{principal Newton polygon} $\npphm(g)$ is the polygon formed by the sides of $\nphm(g)$ of slope less than $-\mu(\phi)$. 

If $\nphm(g)$ has no sides of slope less than $-\mu(\phi)$, then $\nphm^{\mbox{\tiny pp}}(g)$ is defined to be the left endpoint of $\nphm(g)$. 
\end{definition}

The length of a principal Newton polygon has an interesting algebraic interpretation in terms of the graded algebra $\ggm$, as shown in the next lemma.

\begin{lemma}\label{S0}
Let $N=\nphm(g)$ be the Newton polygon of a non-zero $g\in \kx$.
\begin{enumerate}
\item[(1)] The length $\ell\left(\nphm^{\mbox{\tiny pp}}(g)\right)$ 
coincides with the order with which the prime element $\hm(\phi)$ divides $\hm(g)$ in the graded algebra $\ggm$.
\item[(2)] If $h\in \kx$ satisfies $g\smu h$, then $S_{\mu(\phi)}(g)=S_{\mu(\phi)}(h)$.
\end{enumerate} 
\end{lemma}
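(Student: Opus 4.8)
The plan is to prove the two assertions separately, both by reducing to the behaviour of the initial-term map $\hm$ on $\phi$-expansions.

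For part (1), I would argue as follows. Write the canonical $\phi$-expansion $g=\sum_{0\le s}a_s\phi^s$ with $\deg(a_s)<\deg(\phi)$, so that each $a_s$ satisfies $\phi\nmid_\mu a_s$ (a polynomial of degree less than $\deg(\phi)$ cannot be $\mu$-divisible by the $\mu$-minimal polynomial $\phi$). Hence, by Proposition \ref{extension}(a), the element $\hm(a_s)$ is a unit in $\ggm$ whenever $a_s\ne 0$. Now group the monomials $a_s\phi^s$ according to the value $\mu(a_s\phi^s)=\mu(a_s)+s\mu(\phi)$: the points $(s,\mu(a_s))$ lying on the $\mu(\phi)$-component $S_{\mu(\phi)}(N)$ are exactly those $s$ for which $\mu(a_s\phi^s)$ attains the minimum $\mu(g)$. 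For those $s$, the terms $\hm(a_s\phi^s)=\hm(a_s)\hm(\phi)^s$ add up in the degree-$\mu(g)$ piece; the key nontrivial point is that this sum is \emph{nonzero}, which is exactly where one uses that $\phi$ is a key polynomial (so $\hm(\phi)$ is prime, $\Delta$ is a domain, and the $\hm(a_s)\hm(\phi)^{s}$ for distinct $s$ in the support are linearly independent over $\Delta$ up to units — this is the standard "$\mu$-expansion" computation). From the resulting expression $\hm(g)=\hm(\phi)^{s_0}\cdot(\text{unit}+\text{higher powers of }\hm(\phi))$, where $s_0=s_{\mu,\phi,\mu(\phi)}(g)$ is the left abscissa of $S_{\mu(\phi)}(N)$, one reads off that the exact power of $\hm(\phi)$ dividing $\hm(g)$ is $s_0$. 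Finally, $s_0$ is precisely $\ell(\npphm(g))$: the principal part consists of the sides of slope $<-\mu(\phi)$, i.e. the part of $N$ strictly to the left of $S_{\mu(\phi)}(N)$, so its length is the left abscissa of the $\mu(\phi)$-component. (One should double-check the degenerate case where $N$ has no side of slope $<-\mu(\phi)$, where both quantities equal $\ord_\phi(g)$.)

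For part (2), suppose $g\smu h$, i.e. $\hm(g)=\hm(h)$. By Remark \ref{mug}, $\mu(g)=\mu(h)$, so the lines of slope $-\mu(\phi)$ supporting $S_{\mu(\phi)}(N_g)$ and $S_{\mu(\phi)}(N_h)$ meet the vertical axis at the same ordinate. It remains to see that the two supported segments have the same endpoints. Writing $\phi$-expansions $g=\sum a_s\phi^s$, $h=\sum b_s\phi^s$, the degree-$\mu(g)$ component of $\hm(g)$ is $\sum_{s\in T_g}\hm(a_s)\hm(\phi)^s$ where $T_g$ indexes the lattice points on $S_{\mu(\phi)}(N_g)$, and similarly for $h$. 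Since these two homogeneous elements are equal, and since by the linear-independence argument above the monomials $\hm(\phi)^s$ (with unit coefficients from $\Delta$) in distinct degrees-in-$\hm(\phi)$ are independent, we must have $T_g=T_h$ (and indeed $\hm(a_s)=\hm(b_s)$ on this set). Therefore the sets of abscissas coincide, so $s_{\mu,\phi,\mu(\phi)}(g)=s_{\mu,\phi,\mu(\phi)}(h)$ and $s'_{\mu,\phi,\mu(\phi)}(g)=s'_{\mu,\phi,\mu(\phi)}(h)$, and combined with the equality of supporting lines this gives $S_{\mu(\phi)}(N_g)=S_{\mu(\phi)}(N_h)$ as subsets of $\qgq$.

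\textbf{Main obstacle.} The crux in both parts is the same: the claim that for a key polynomial $\phi$, and any finite collection of polynomials $c_s$ with $\deg(c_s)<\deg(\phi)$ and $\mu(c_s\phi^s)$ all equal, the sum $\sum_s\hm(c_s)\hm(\phi)^s$ is nonzero and "records" exactly which indices $s$ occurred. This is the standard fact that $\{\hm(\phi)^s\}_s$ behaves like an independent family over $\Delta$ once one works modulo units — morally because $\hm(\phi)$ is transcendental-like over $\Delta$ in the relevant graded sense (cf. Theorem \ref{kstructure}). I would expect to invoke this as a known structural property of key polynomials rather than reprove it; if a self-contained argument is wanted, one reduces to the statement that $\hm(g)=0$ forces every $\hm(a_s\phi^s)$ of minimal value to vanish, which contradicts $\phi\nmid_\mu a_s$. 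Everything else is bookkeeping on Newton polygons.
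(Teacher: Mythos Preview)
Your outline matches the paper's proof: both identify $\ell(\npphm(g))$ with $\min I(g)$, where $I(g)=\{s:\mu(a_s\phi^s)=\mu(g)\}$, and the paper then simply cites \cite[Lem.~2.8]{KeyPol} for the divisibility order and \cite[Lem.~2.10]{KeyPol} for $I(g)=I(h)$. You are essentially reproving those two cited lemmas inline.

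One step fails as written. Proposition~\ref{extension}(a) does \emph{not} say that $\hm(a_s)$ is a unit in $\ggm$; it says $H_{\mu'}(f)$ is a unit in the graded algebra of the \emph{augmented} valuation $\mu'$. For an arbitrary key polynomial $\phi$ (not necessarily of minimal degree), the element $\hm(a_s)\in\ggm$ need not be a unit. What you actually need---and what does hold---is only that $\hm(\phi)\nmid\hm(a_s)$ whenever $0\ne a_s$ has $\deg a_s<\deg\phi$; this is precisely $\mu$-minimality of $\phi$. With that in hand your factoring
\[
\hm(g)=\hm(\phi)^{s_0}\Bigl(\hm(a_{s_0})+\hm(\phi)\cdot(\dots)\Bigr)
\]
gives order exactly $s_0$, since the parenthesis is not divisible by the prime $\hm(\phi)$. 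For part~(2) your ``linear independence'' route can be made to work, but the coefficients $\hm(a_s)-\hm(b_s)$ are not automatically of the required shape; the cleanest fix is to argue on $g-h$ directly: since $\mu(g-h)>\mu(g)$ and $g-h=\sum_s(a_s-b_s)\phi^s$ is again a canonical $\phi$-expansion, Lemma~\ref{minimal0} gives $\mu\bigl((a_s-b_s)\phi^s\bigr)>\mu(g)$ for every $s$, which forces $I(g)=I(h)$ at once.
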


\begin{proof}
The points $\left(s,\mu(a_s)\right)\in S_{\mu(\phi)}(g)$ are those with $s$ belonging to the set 
$$
I(g)=\left\{s\in\Z_{\ge0}\mid \mu(a_s\phi^s)=\mu(g)\right\}.
$$
Clearly, $\ell\left(\nphm^{\mbox{\tiny pp}}(g)\right)=\min(I)$. By \cite[Lem. 2.8]{KeyPol}, this non-negative integer is the order with which $\hm(\phi)$ divides $\hm(g)$ in  $\ggm$. 

Finally, if $g\smu h$, then  \cite[Lem. 2.10]{KeyPol} shows that $I(g)=I(h)$. Hence, the two segments  $S_{\mu(\phi)}(g)$, $S_{\mu(\phi)}(h)$ have endpoints with the same abscissas $s<s'$. 
On the other hand, the ordinates $u$, $u'$ of their endpoints  are determined by the abscissas and the common value $$\mu(g)=\mu(h)=u+s\mu(\phi)=u'+s'\mu(\phi).$$  
Hence, these segments coincide. 
\end{proof}

\subsection{Newton polygons with respect to augmented valuations}

Let us fix a valuation $\mu\in\V$ and a key polynomial $\phi\in \kpm$. Consider the augmentation
$$\mu'=[\mu;\phi,\ga],\qquad \ga=\mu'(\phi)>\mu(\phi),\quad \ga\in\gq.$$

Take a non-zero $\,g\in\kx$, with $\phi$-expansion $g=\sum_{0\le s}a_s\phi^s$. Let us denote
\begin{equation}\label{numu}
S_\ga(g)=S_\ga\left(\nph(g)\right),\qquad s(g)=s_{\mu,\phi,\ga}(g),\qquad s'(g)=s'_{\mu,\phi,\ga}(g),
\end{equation}
and let $u(g)\in \gq$ be the ordinate of the left endpoint of $S_\ga(g)$.

By Proposition \ref{extension}, $\phi$ is a key polynomial for $\mu'$ of minimal degree.

The Newton polygon $\nphm(g)$ is related to $\nph(g)$ in an obvious way:
$$
\nphm(g)=\nph(g),\qquad N^{\mbox{\tiny pp}}_{\mu',\phi}(g)\subsetneq \npphm(g).
$$
In fact, since $\mu'(a_s)=\mu(a_s)$ for all $s$, these polygons have the same cloud of points. But their 
principal parts are  different because $\mu'(\phi)>\mu(\phi)$.

Since $S_\ga(g)$ coincides with the $\mu'(\phi)$-component of $\nphm(g)$, the results in the last section show that $\nph(g)$ yields information about $g,\mu',\phi$.

\begin{figure}
\caption{$\nph(g)$ contains information about $\mu'=[\mu;\phi,\ga]$}\label{figN-aug}
\begin{center}
\setlength{\unitlength}{4mm}
\begin{picture}(21,9.5)
\put(11.8,5.8){$\bullet$}\put(10.8,3){$\bullet$}\put(7.8,5){$\bullet$}\put(2.75,5){$\bullet$}\put(6.75,3){$\bullet$}\put(4.75,4){$\bullet$}
\put(-1,0.5){\line(1,0){18}}\put(0,-0.5){\line(0,1){9.5}}
\put(3,5.2){\line(-1,3){1}}\put(3,5.23){\line(-1,3){1}}
\put(11,3.25){\line(-1,0){4}}\put(11,3.28){\line(-1,0){4}}
\put(7,3.2){\line(-2,1){4}}\put(7,3.23){\line(-2,1){4}}
\put(11,3.2){\line(1,1){3}}\put(11,3.22){\line(1,1){3}}
\multiput(3,.4)(0,.25){20}{\vrule height2pt}
\multiput(7,.4)(0,.25){11}{\vrule height2pt}
\put(-2,1.05){\line(6,1){17}}
\put(10,1.7){\line(-2,1){12}}
\put(2.4,-0.5){\begin{footnotesize}$s(g)$\end{footnotesize}}
\put(14,4.2){\begin{footnotesize}line of slope $-\mu(\phi)$\end{footnotesize}}
\put(10.5,1.5){\begin{footnotesize}line of slope $-\ga$\end{footnotesize}}
\put(16,-.4){\begin{footnotesize}$\Q$\end{footnotesize}}
\put(-1.4,8.3){\begin{footnotesize}$\gq$\end{footnotesize}}
\multiput(-.1,5.2)(.25,0){13}{\hbox to 2pt{\hrulefill }}
\put(6.5,-0.5){\begin{footnotesize}$s'(g)$\end{footnotesize}}
\put(-2,5){\begin{footnotesize}$u(g)$\end{footnotesize}}
\put(-2,1.6){\begin{footnotesize}$\mu(g)$\end{footnotesize}}
\put(-2,6.2){\begin{footnotesize}$\mu'(g)$\end{footnotesize}}
\put(-.6,-.4){\begin{footnotesize}$0$\end{footnotesize}}
\put(-.2,6.7){\line(1,0){.4}}
\put(-.2,1.45){\line(1,0){.4}}
\end{picture}
\end{center}
\end{figure}
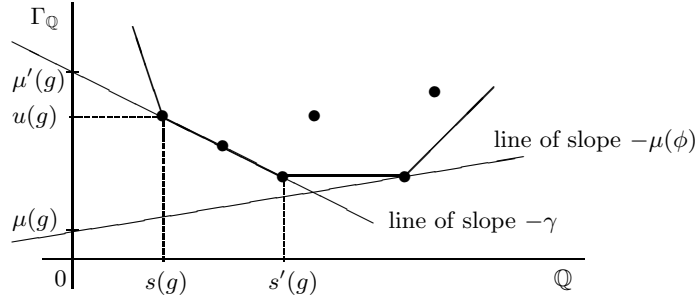



\begin{lemma}\label{additivity}
Let $g,h\in \kx$ be non-zero polynomials.
\begin{enumerate}
\item[(1)] The integer $s(g)$ is the order with which the prime element $H_{\mu'}(\phi)$ divides $H_{\mu'}(g)$ in the graded algebra $\gg_{\mu'}$. In particular,
$s(gh)=s(g)+s(h)$.
\item[(2)] $s'(gh)=s'(g)+s'(h)$. 
\end{enumerate}
\end{lemma}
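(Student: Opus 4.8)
The two statements are about how the quantities $s(g)$ and $s'(g)$ behave under multiplication, so the natural strategy is to reduce each to an additive invariant. For item (1), the key observation is that $S_\ga(g)$ is exactly the $\mu'(\phi)$-component of $\nphm(g)$, hence by Remark~\ref{mug} and the identification $\nphm(g)=\nph(g)$, the left abscissa $s(g)$ equals $\ell\!\left(N^{\mbox{\tiny pp}}_{\mu',\phi}(g)\right)$, the length of the principal Newton polygon with respect to $\mu'$. By part (1) of Lemma~\ref{S0} (applied with $\mu'$ in place of $\mu$, noting that $\phi$ is a key polynomial for $\mu'$ of minimal degree by Proposition~\ref{extension}(c)), this length is precisely the order with which the prime element $H_{\mu'}(\phi)$ divides $H_{\mu'}(g)$ in $\gg_{\mu'}$. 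Since $H_{\mu'}$ is multiplicative, $H_{\mu'}(gh)=H_{\mu'}(g)H_{\mu'}(h)$, and $\gg_{\mu'}$ is an integral domain in which $H_{\mu'}(\phi)\gg_{\mu'}$ is a prime ideal, the $\phi$-adic valuation of a product is the sum of the $\phi$-adic valuations. This gives $s(gh)=s(g)+s(h)$ immediately.

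**Item (2).** For the right endpoint I would argue directly with the combinatorics of the $\phi$-expansion and the $\ga$-component, rather than through the graded algebra. Write the $\phi$-expansions $g=\sum_s a_s\phi^s$, $h=\sum_t b_t\phi^t$. The right abscissa $s'(g)$ is the \emph{largest} $s$ such that $(s,\mu(a_s))$ lies on the line of slope $-\ga$ supporting $S_\ga(g)$, i.e.\ the largest $s$ with $\mu(a_s)+s\ga$ minimal; equivalently, $s'(g)=\ell\!\left(\npphm(g)\right)$ is not quite right—rather $s'(g)$ is the right endpoint of the $\ga$-component, which for $\ga>\mu(\phi)$ coincides with $\ell\!\left(N^{\mbox{\tiny pp}}_{\mu',\phi}(g)\right)+(\text{length of the }\ga\text{-side})$. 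The cleanest route is: the product $gh$ has $\phi$-expansion whose coefficient of $\phi^{s'(g)+s'(h)}$ is, up to higher-$\phi$-order terms, governed by $H_{\mu'}$ of the top pieces of $g$ and $h$ along the $\ga$-side. Concretely, one observes that the \emph{upper} truncation behaves multiplicatively: if one passes to the auxiliary valuation $\mu''=[\mu;\phi,\ga+\epsilon]$ for a sufficiently small $\epsilon$ (or equivalently uses $s'(g)=s(g)+\bigl(\ell(S_\ga(g))\bigr)$ and applies part (1) to a slightly steeper slope), then $s'(g)$ itself becomes the $\phi$-adic order in $\gg_{\mu''}$ of $H_{\mu''}(g)$, which is again additive by the same integral-domain argument. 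Summing over $g$ and $h$ yields $s'(gh)=s'(g)+s'(h)$.

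**Where the difficulty lies.** The first part is essentially a citation of Lemma~\ref{S0}(1) plus multiplicativity of $H_{\mu'}$, so it is routine. The genuine content is in item (2): unlike $s(g)$, the right abscissa $s'(g)$ is not the order of a prime divisor for the \emph{fixed} valuation $\mu'$, so one cannot invoke Lemma~\ref{S0}(1) directly. The main obstacle is therefore to find the right interpretation of $s'(g)$ as an additive invariant. I expect the cleanest fix is to note that $s'(g)$ equals the length $\ell\!\left(N^{\mbox{\tiny pp}}_{\mu^\sharp,\phi}(g)\right)$ for the augmented valuation $\mu^\sharp=[\mu;\phi,\ga^\sharp]$ where $\ga^\sharp$ is chosen strictly between $\ga$ and the next slope of $\nphm(g)$ (such a $\ga^\sharp$ exists because the set of slopes is finite)—but $\ga^\sharp$ depends on $g$, so to combine with $h$ one must pick $\ga^\sharp$ larger than all relevant slopes of both $g$ and $h$, and check that for such a common $\ga^\sharp$ one has $\ell\!\left(N^{\mbox{\tiny pp}}_{\mu^\sharp,\phi}(g)\right)=s'(g)$ and likewise for $h$ and $gh$. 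Once that bookkeeping is in place, part (1) applied to $\mu^\sharp$ finishes the proof. An alternative, purely polygon-theoretic argument—Minkowski-type additivity of lower convex hulls under multiplication of $\phi$-expansions—would also work but requires controlling that no cancellation occurs in the leading $\phi$-coefficient along the $\ga$-side, which is exactly what the domain property of $\gg_{\mu'}$ provides.
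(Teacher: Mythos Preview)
Your treatment of item~(1) matches the paper's: both reduce to Lemma~\ref{S0}(1) applied to $\mu'$, using that $\phi\in\op{KP}(\mu')$ has minimal degree (Proposition~\ref{extension}(c)) and the multiplicativity of $H_{\mu'}$.

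For item~(2) you take a genuinely different route. The paper simply cites \cite[Cor.~3.4]{KeyPol}, an external result which, for a key polynomial of minimal degree, gives the additivity of the right abscissa $s'$ directly. Your perturbation argument via $\mu^\sharp=[\mu;\phi,\ga^\sharp]$ with $\ga^\sharp$ slightly below $\ga$ is correct and self-contained: since each of $\nph(g)$, $\nph(h)$, $\nph(gh)$ has only finitely many slopes and $\gq$ is densely ordered, one may choose $\ga^\sharp\in(\mu(\phi),\ga)$ so that none of the three polygons has a side of slope in the interval $(-\ga,-\ga^\sharp)$; then for each of $g,h,gh$ the $\ga^\sharp$-component is the single vertex at the right endpoint of the $\ga$-component, and item~(1) applied to $\mu^\sharp$ yields $s'(gh)=s_{\ga^\sharp}(gh)=s_{\ga^\sharp}(g)+s_{\ga^\sharp}(h)=s'(g)+s'(h)$. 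The one point you should state explicitly is that $\ga^\sharp$ must also avoid the slopes of $\nph(gh)$, not just those of $\nph(g)$ and $\nph(h)$; you gesture at this (``and likewise for \ldots\ $gh$'') but it is the crux of the bookkeeping. The paper's citation buys brevity; your argument buys independence from the companion paper \cite{KeyPol}.
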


\begin{proof}
Item (1) follows directly from Lemma \ref{S0}. Since $\phi$ is a key polynomial for $\mu'$ of minimal degree, item (2) follows from \cite[Cor. 3.4]{KeyPol}.
\end{proof}


\subsection{Addition of Newton polygons}\label{secAdd}
We admit that a point in the space $\qgq$ is a segment whose right and left endpoints coincide. 

There is a natural addition of segments in $\qgq$. The sum $S_1+S_2$ of two segments is the ordinary vector sum if at least one of the segments is a single point. Otherwise, $S_1+S_2$ is the  polygon whose left endpoint is the vector sum of the two left endpoints of $S_1, S_2$ and whose sides are the join of $S_1$ and $S_2$ considered with increasing slopes from left to right.

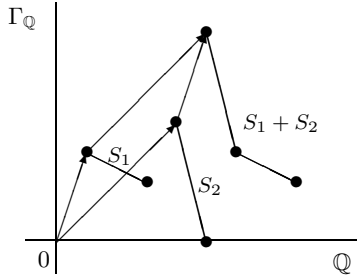
\begin{figure}
\caption{Addition of two segments}\label{figSum}
\begin{center}\setlength{\unitlength}{4mm}
\begin{picture}(10,9)
\put(-1,.6){\line(1,0){11}}\put(0,-.5){\line(0,1){9}}\put(0,.5){\vector(1,3){1}}
\put(.8,3.3){$\bullet$}\put(2.8,2.3){$\bullet$}
\put(1,3.5){\line(2,-1){2}}\put(1,3.52){\line(2,-1){2}}\put(0,.5){\vector(1,1){4}}
\put(3.75,4.3){$\bullet$}\put(4.75,0.3){$\bullet$}
\put(4,4.5){\line(1,-4){1}}\put(4,4.52){\line(1,-4){1}}\put(1,3.5){\vector(1,1){4}}\put(4,4.5){\vector(1,3){1}}
\put(4.75,7.3){$\bullet$}\put(5.75,3.3){$\bullet$}\put(7.75,2.3){$\bullet$}
\put(5,7.5){\line(1,-4){1}}\put(5,7.52){\line(1,-4){1}}\put(6,3.5){\line(2,-1){2}}\put(6,3.52){\line(2,-1){2}}\put(-.6,-.3){\begin{footnotesize}$0$\end{footnotesize}}\put(9.2,-.4){\begin{footnotesize}$\Q$\end{footnotesize}}\put(-1.6,7.8){\begin{footnotesize}$\gq$\end{footnotesize}}\put(1.7,3.2){\begin{scriptsize}$S_1$\end{scriptsize}}
\put(4.7,2.2){\begin{scriptsize}$S_2$\end{scriptsize}}\put(6.2,4.3){\begin{scriptsize}$S_1+S_2$\end{scriptsize}}
\end{picture}
\end{center}
\end{figure}

We keep dealing with an arbitrary valuation $\mu\in\V$, and a key polynomial $\phi\in\kpm$. 
Also, for any $\ga\in\gq$, $\ga>\mu(\phi)$, we keep using the notation of (\ref{numu}).

\begin{lemma}\label{sumSla}
For non-zero $g,h\in \kx$, and any $\ga\in\gq$, $\ga>\mu(\phi)$, we have $$S_\ga(gh)=S_\ga(g)+S_\ga(h).$$ 
\end{lemma}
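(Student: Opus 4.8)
The plan is to reduce the statement to the multiplicativity of the initial term map and the order function $s(\,\cdot\,)$ already established in Lemma \ref{additivity}. The key observation is that $S_\ga(g)$ is the $\ga$-component $S_\ga(\nph(g))$, which by Definition \ref{sla} is the segment of $\nph(g)$ on which $u+s\ga$ attains its minimum; this minimum value is exactly $\mu'(g)$ by the definition of the augmented valuation (Definition \ref{muprima}) together with Remark \ref{mug}. So the left endpoint of $S_\ga(g)$ has abscissa $s(g)$ and the right endpoint has abscissa $s'(g)$, while the ordinate of the left endpoint is $u(g)=\mu'(g)-s(g)\ga$. Thus a segment $S_\ga(g)$ is completely determined by the three quantities $s(g)$, $s'(g)$, and $\mu'(g)$.

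First I would record that, for the vector sum of segments as defined in Section \ref{secAdd}, the left endpoint of $S_1+S_2$ is the sum of the left endpoints, the right endpoint is the sum of the right endpoints, and—crucially—all sides of both $S_1$ and $S_2$ appear among the sides of $S_1+S_2$, reassembled in order of increasing slope. In particular $S_1+S_2$ is again a segment (i.e.\ a single side, or a point) precisely when $S_1$ and $S_2$ have the same slope, or one of them is a point. So the first genuine step is to check that $S_\ga(g)$ and $S_\ga(h)$ always have the same slope whenever both are honest segments: indeed, by Remark \ref{mug} applied to $\mu'$, every side appearing in a $\ga$-component $S_\ga(\nph(g))$ lies on the line of slope $-\ga$, so $S_\ga(g)$ and $S_\ga(h)$ are both (sub)segments of lines of slope $-\ga$; hence their sum is again a segment of slope $-\ga$.

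Next I would compute the three invariants of $S_\ga(g)+S_\ga(h)$ and match them with those of $S_\ga(gh)$. The abscissa of the left endpoint of $S_\ga(g)+S_\ga(h)$ is $s(g)+s(h)$, which equals $s(gh)$ by Lemma \ref{additivity}(1); the abscissa of the right endpoint is $s'(g)+s'(h)$, which equals $s'(gh)$ by Lemma \ref{additivity}(2); and the ordinate of the left endpoint is $u(g)+u(h)=\bigl(\mu'(g)-s(g)\ga\bigr)+\bigl(\mu'(h)-s(h)\ga\bigr)=\mu'(gh)-s(gh)\ga=u(gh)$, using that $\mu'$ is a valuation. Since a segment of slope $-\ga$ is uniquely pinned down by the abscissas of its two endpoints and the ordinate of one of them, it follows that $S_\ga(gh)=S_\ga(g)+S_\ga(h)$.

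I expect the only subtle point to be the bookkeeping in the degenerate cases: when $S_\ga(g)$ is a single point (i.e.\ $s(g)=s'(g)$, so $\nph(g)$ has no side of slope $-\ga$) the vector-sum convention of Section \ref{secAdd} must be invoked, but the three-invariant argument goes through verbatim since a point is a segment with coinciding endpoints. One should also note at the outset that if either $g$ or $h$ is a unit in $\kx$, or more generally if $s(g)=s'(g)=0$, the identity is immediate. Beyond this, the argument is purely formal once Lemma \ref{additivity} is in hand; the real content was already extracted there.
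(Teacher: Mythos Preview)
Your proof is correct and follows essentially the same approach as the paper: both arguments reduce the claim to the three equalities $s(gh)=s(g)+s(h)$, $s'(gh)=s'(g)+s'(h)$, and $u(gh)=u(g)+u(h)$, obtain the first two from Lemma~\ref{additivity}, and derive the third from $\mu'(gh)=\mu'(g)+\mu'(h)$ together with $u(\cdot)=\mu'(\cdot)-s(\cdot)\ga$. The only cosmetic difference is that you spell out more explicitly why the sum of the two $\ga$-components is again a segment of slope $-\ga$, which the paper dispatches in a single clause.
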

    
\begin{proof}
Since the involved segments either have the same slope or consist of a single point, the statement is equivalent to the equalities
$$
s(gh)=s(g)+s(h),\quad s'(gh)=s'(g)+s'(h)\quad \mbox{and}\quad u(gh)=u(g)+u(h).
$$
The first two equalities follow from Lemma \ref{additivity}. 
In order to prove the third, consider the augmented valuation $\mu'=[\mu;\phi,\ga]$.
By Remark \ref{mug}, 
$$
u(g)+s(g)\ga=\mu'(g),\qquad
u(h)+s(h)\ga=\mu'(h),\qquad
u(gh)+s(gh)\ga=\mu'(gh).
$$
The claimed identity $u(gh)=u(g)+u(h)$ follows from $\mu'(gh)=\mu'(g)+\mu'(h)$ and $s(gh)=s(g)+s(h)$.
\end{proof}

The addition of segments may be extended to an addition law for Newton polygons, just by identifying a Newton polygon with the sum of its sides. 


As an immediate consequence of Lemma \ref{sumSla}, we get the \emph{Theorem of the product for principal Newton polygons}.

\begin{theorem}\label{product}
Let $\phi$ be a key polynomial for the valuation $\mu\in\V$. Then, for any non-zero $g,h\in \kx$ we have $\npphm(gh)=\npphm(g)+\npphm(h)$.\hfill{$\Box$}
\end{theorem}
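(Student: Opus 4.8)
The plan is to reduce Theorem \ref{product} entirely to Lemma \ref{sumSla}, exploiting the fact that the principal Newton polygon is, by construction, the concatenation of those sides of $\nphm$ whose slope is strictly less than $-\mu(\phi)$, and that these are precisely the sides lying strictly to the left of the $\mu(\phi)$-component. First I would set up the identification of a Newton polygon with the formal sum of its sides, so that the claimed identity $\npphm(gh)=\npphm(g)+\npphm(h)$ becomes an equality of polygons under the addition law of Section \ref{secAdd}; both sides are determined by their multiset of sides together with the abscissa of the left endpoint.

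Next I would handle the left endpoints. The abscissa of the left endpoint of $\npphm(g)$ is $s_{\mu,\phi,\mu(\phi)}(g)$, the left abscissa of the $\mu(\phi)$-component $S_{\mu(\phi)}(\nphm(g))$, and by Lemma \ref{additivity}(1) (equivalently Lemma \ref{S0}(1)) this is the order to which $\hm(\phi)$ divides $\hm(g)$ in $\ggm$, which is additive in $g$; the ordinate of the left endpoint is then $\mu(g)$ minus $s\,\mu(\phi)$ by Remark \ref{mug}, and $\mu(gh)=\mu(g)+\mu(h)$ gives additivity of the whole left endpoint. For the sides themselves, I would argue that the sides of $\npphm(g)$ are exactly the sides of $\nphm(g)$ with slope $<-\mu(\phi)$, i.e. the portion of $\nphm(g)$ strictly to the left of $S_{\mu(\phi)}(g)$; concatenating this left portion with $S_{\mu(\phi)}(g)$ recovers all of $\nphm^{\le}(g)$, the part of $\nphm(g)$ up to and including the $\mu(\phi)$-component. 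Taking $\ga=\mu(\phi)$ in Lemma \ref{sumSla} shows $S_{\mu(\phi)}$ is additive, so it suffices to see that $\nphm^{\le}(gh)=\nphm^{\le}(g)+\nphm^{\le}(h)$ as polygons (with the side-concatenation addition), and then cancel off the common summand $S_{\mu(\phi)}$.

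That last additivity of the ``principal-plus-$\mu(\phi)$-component'' polygon is really the Theorem of the product for ordinary Newton polygons restricted to the relevant range of slopes, and it follows from the standard valuation-combinatorics: if $g=\sum a_s\phi^s$ and $h=\sum b_t\phi^t$, then in the $\phi$-expansion of $gh$ the coefficient in degree $n$ is $\sum_{s+t=n}a_sb_t$ up to the correction coming from reducing products $a_sb_t$ modulo $\phi$, and one checks — exactly as in the rank-one case treated in \cite[Sec.~2]{ResidualIdeals} — that these corrections never affect the vertices lying at or below the line of slope $-\mu(\phi)$, because such a correction raises degree and can only increase $\mu$-values in a way that stays above that line. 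Concretely I would phrase it as: a point $(n,w)$ lies on $\nphm^{\le}(gh)$ iff $n$ realizes the minimum of $\mu(a_s\phi^s)+\mu(b_t\phi^t)$ over $s+t=n$ and this minimum is $\le$ the value read off the slope-$(-\mu(\phi))$ line, and Minkowski addition of the two lower hulls produces exactly those points; the $\phi$-reduction terms are irrelevant in this range by a degree/value estimate.

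The main obstacle I anticipate is precisely this control of the $\phi$-expansion of a product: one does not have $(\sum a_s\phi^s)(\sum b_t\phi^t)=\sum_n(\sum_{s+t=n}a_sb_t)\phi^n$ on the nose, because the $a_sb_t$ need not have degree $<\deg\phi$, so carrying occurs. The cleanest way around it — and the route I would actually take — is to avoid $\phi$-expansions of the product altogether and argue in the graded algebra: by Lemma \ref{additivity}(1) and (2) the abscissas $s$ and $s'$ of the $\ga$-component are additive for every $\ga>\mu(\phi)$, by Remark \ref{mug} the ordinate $u$ is additive once $s$ is, hence $S_\ga(\cdot)$ is additive for all $\ga>\mu(\phi)$ (this is Lemma \ref{sumSla}), and $\npphm(g)$ is literally the join over all slopes $\la>\mu(\phi)$ of the segments $S_\la(\nphm(g))$ that happen to be genuine sides. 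Additivity slope-by-slope of $S_\la$ then forces additivity of the join, i.e. $\npphm(gh)=\npphm(g)+\npphm(h)$, with no manipulation of $\phi$-expansions of products required at all. So in the end the proof is: ``immediate from Lemma \ref{sumSla} together with the description of $\npphm$ as the sum of its sides,'' which is exactly how the statement is flagged (``As an immediate consequence of Lemma \ref{sumSla}'').
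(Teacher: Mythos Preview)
Your final paragraph is exactly the paper's argument: the theorem is stated as an immediate consequence of Lemma~\ref{sumSla}, and the one-line justification is that $\npphm(g)$ is the join of the $\la$-components $S_\la(\nphm(g))$ over all $\la>\mu(\phi)$, each of which is additive. So you land in the right place.

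Two small corrections to the exploratory part. First, the abscissa $s_{\mu,\phi,\mu(\phi)}(g)$ is the abscissa of the \emph{right} endpoint of $\npphm(g)$, not the left one (it equals $\ell(\npphm(g))$, cf.\ Lemma~\ref{S0}); the left endpoint sits at abscissa $\ord_\phi(g)$. Your additivity argument for that vertex via Lemma~\ref{additivity}(1) and Remark~\ref{mug} is correct, it just describes the right endpoint. Second, Lemma~\ref{sumSla} is stated only for $\ga>\mu(\phi)$, so invoking it at $\ga=\mu(\phi)$ is not literally licensed; fortunately you do not need that case, since the principal polygon is already captured by the $S_\la$ with $\la>\mu(\phi)$ (the left endpoint is $S_\la$ for $\la$ large, the right endpoint is the limit as $\la\downarrow\mu(\phi)$). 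The detour through a putative $\nphm^{\le}$ and the $\phi$-expansion-with-carries argument is unnecessary, as you yourself conclude.
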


The analogous statement for entire Newton polygons is false (cf. \cite[Sec. 2]{ResidualIdeals}).

\section[Residual polynomials operators]{Residual polynomial operators of inductive valuations}\label{secRi}

Consider a MacLane chain of an inductive valuation $\mu$ on $\kx$:
$$
\minf\ \stackrel{\phi_0,\ga_0}\lra\  \mu_0\ \stackrel{\phi_1,\ga_1}\lra\ \cdots
\ \stackrel{\phi_{r-1},\ga_{r-1}}\lra\ \mu_{r-1} 
\ \stackrel{\phi_{r},\ga_{r}}\lra\ \mu_{r}=\mu.
$$
We use the simplified notation \ $\gg=\ggm$, \ $\Delta=\Delta_\mu$, \
and we shall freely use all data associated with the MacLane chain in section \ref{secIndVals}. 

Specially relevant to our purpose is the tower of finite field extensions:
\begin{equation}\label{chaink}  k= \ka_0\subset  \ka_1\subset\cdots\subset  \ka_r;\qquad \ka_i=\im\left(\Delta_{i-1}\to\Delta_{i}\right)\subset\Delta_i,\quad 1\le i\le r.
\end{equation}
Each $\ka_i$ is the algebraic closure of $k$ in $\Delta_{i}$, and it satisfies $\Delta_{i}^*=\ka_i^*$.

Also, we shall make use of the Newton polygon operators
$$N_i:=N_{\mu_{i-1},\phi_i},\qquad 0\le i\le r.$$

\subsection{Definition of the residual polynomial  operator}\label{secDefR}

For a non-zero $ f \in K[x]$ consider the canonical $\phi_r$-expansion:
\begin{equation}\label{fexp}
f=\sum\nolimits_{s=0}^\ell a_s\phi_r^s,\qquad \deg(a_s)<\deg(\phi_r).
\end{equation}
By the definition of an augmented valuation, $$\mu(f)=\Min\{\mu\left(a_s\phi_r^s\right)\mid s\ge0\}=\Min\{\mu_{r-1}\left(a_s\right)+s\ga_r\mid s\ge0\}.$$
In the case $r=0$, we have $a_s\in K$ and we agree that $\mu_{-1}=v$.\e

The Newton polygon $N_r(f)$ is the lower convex hull of the cloud of points: 
$$\cc=\left\{Q_s\mid s\ge0\right\}\subset\Z_{\ge0}\times \g_{\mu_{r-1}},\qquad Q_s=\left(s,\mu_{r-1}\left(a_s\right)\right).
$$

Let $S_{\ga_r}(f)\subset \Q\times \gq$ be the $\ga_r$-component of $N_r(f)$ (Definition \ref{sla}). Let $$(s_0,u_0):=(s_r(f),u_r(f)),\qquad (s'_r(f),u'_r(f))$$ be the left and right endpoints of $S_{\ga_r}(f)$, respectively. 

By Remark \ref{mug}, for any point $(s,u)\in N_r(f)$, we have
$$
(s,u)\in S_{\ga_r}(f)\ \sii\ u+s\ga_r=\mu(f)=u_0+s_0\ga_r.
$$

By Lemma \ref{j}, $d:=(s'_r(f)-s_r(f))/e_r$ is an integer, and if we take
$$
s_j=s_0+je_r, \qquad u_j=u_0-je_r,\qquad P_j=(s_j,u_j),\qquad 0\le j\le d,
$$
then, $S_{\ga_r}(f)\cap \left(\Z_{\ge0}\times \g_{\mu_{r-1}}\right)=\left\{P_0,\,P_1,\, \dots,\,P_d\right\}$. 

By construction, the endpoints of $S_{\ga_r}(f)$ belong to the cloud $\cc$. That is, $P_0=Q_{s_0}$, $P_d=Q_{s_d}$. However, for $0<j<d$, the point $Q_{s_j}\in\cc$ may lie strictly above $P_j$. \e

In \cite[Sec. 5]{KeyPol}, a normalized (monic) residual polynomial of $f$ was considered:
$$
\widehat{R}_r(f)=\zeta_0+\zeta_1\,y+\cdots+\zeta_{d-1}y^{d-1}+y^d\in \ka_r[y],
$$
with coefficients 
$$\zeta_j=p_r^{-h_r(d-j)}\hm(a_{s_d})^{-1}\hm(a_{s_j})\in \ka_r^*,$$ 
if $Q_{s_j}$ lies on $N_r(f)$, and $\zeta_j=0$ otherwise.

The main aim of the paper is to find an efficient algorithm to compute the chain of fields (\ref{chaink}) and the polynomial $\widehat{R}_r(f)$. 
To this end, we consider a non-normalized version of the residual polynomial, defined only on a certain subset of $\kx$.

Consider the subset of \emph{polynomials having attainable $\mu$-values}:
$$
\kxfg=\left\{g\in\kx\mid \mu(g)\in\gmf\right\}\subset \kx,
$$ 
where $\gmf=\g_r\subset \gm$ is the finitely-generated subgroup introduced in section \ref{subsecChainFG}.

\begin{figure}
\caption{Newton polygon $N_r(f)$ for $f\in K[x]$. 
The line $L$ has slope $-\ga_r$}\label{figAlpha}
\begin{center}
\setlength{\unitlength}{5mm}
\begin{picture}(14,8)
\put(2.7,4.8){$\bullet$}\put(3.5,4.4){$\times$}\put(4.5,3.9){$\times$}\put(5.5,3.4){$\times$}\put(6.4,2.95){$\times$}\put(7.4,2.45){$\times$}\put(8.35,2.05){$\bullet$}\put(6.5,5.75){$\bullet$}
\put(-1,0.5){\line(1,0){15}}\put(0,-0.5){\line(0,1){8}}
\put(-1,7){\line(2,-1){12}}
\put(2.9,5){\line(-1,2){1}}\put(2.9,5.04){\line(-1,2){1}}
\put(3,5){\line(2,-1){4.5}}\put(3,5.04){\line(2,-1){5.5}}
\put(8.5,2.2){\line(4,-1){3}}\put(8.5,2.24){\line(4,-1){3}}
\multiput(2.9,.4)(0,.25){19}{\vrule height2pt}
\multiput(8.55,.4)(0,.25){8}{\vrule height2pt}
\multiput(6.7,.4)(0,.25){22}{\vrule height2pt}
\put(8.45,2.6){\begin{footnotesize}$P_d=Q_{s_d}$\end{footnotesize}}
\put(2.8,5.4){\begin{footnotesize}$P_0=Q_{s_0}$\end{footnotesize}}
\put(7,3.5){\begin{footnotesize}$P_j$\end{footnotesize}}
\put(6.6,6.5){\begin{footnotesize}$Q_{s_j}$\end{footnotesize}}
\put(6.5,-.3){\begin{footnotesize}$s_j$\end{footnotesize}}
\put(8.2,-.3){\begin{footnotesize}$s_d=s'_r(f)$\end{footnotesize}}
\put(.8,-.3){\begin{footnotesize}$s_r(f)=s_0$\end{footnotesize}}
\put(11.1,0.8){\begin{footnotesize}$L$\end{footnotesize}}
\put(-.5,-.3){\begin{footnotesize}$0$\end{footnotesize}}
\put(-3.6,4.8){\begin{footnotesize}$u_0=u_r(f)$\end{footnotesize}}
\multiput(-.1,5)(.25,0){13}{\hbox to 2pt{\hrulefill }}
\put(-.2,6.5){\line(1,0){.4}}
\put(-1.8,6.2){\begin{footnotesize}$\mu(f)$\end{footnotesize}}
\end{picture}
\end{center}
\end{figure}
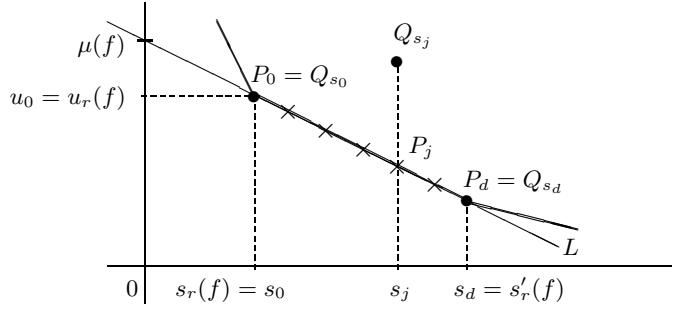

\begin{lemma}\label{agisatt}
Let $f\in\kx$ be a non-zero polynomial, with $\phi_r$-expansion as in (\ref{fexp}).
\begin{enumerate}
\item[(1)] There is a constant $a\in K^*$ such that $af$ has attainable $\mu$-value.
\item[(2)] If $f$ is monic and $\mu$-minimal, then it has attainable $\mu$-value.
\item[(3)] If $f$ has attainable $\mu$-value and $Q_s\in S_{\ga_r}(f)$, then the coefficient $a_s\in\kx$ has attainable $\mu_{r-1}$-value.

\end{enumerate}
\end{lemma}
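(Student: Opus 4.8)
The three statements concern the notion of attainable $\mu$-value, i.e.\ membership of $\mu(g)$ in the finitely-generated subgroup $\g_r=\gmf$. The first would follow at once from the fact that $\gm=\g+\gen{\ga_0,\dots,\ga_r}$ and that $\g_r$ already contains a finitely-generated subgroup $\g_{-1}$ of $\g$ together with $\ga_0,\dots,\ga_r$: write $\mu(f)=\be+\sum n_i\ga_i$ with $\be\in\g$, and choose $a\in K^*$ with $v(a)=-\be+\ga$ for a suitable correction $\ga\in\g_{-1}$, using that $\g_{-1}\subset\g=v(K^*)$ is cofinal enough to absorb $\be$ up to $\g_{-1}$; then $\mu(af)=v(a)+\mu(f)\in\g_r$. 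The point is merely that $\g_r$ contains enough of $\g$ to clear the $\g$-part of $\mu(f)$, which is exactly how $\g_{-1}$ was chosen in Definition \ref{defgf}.

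For (2), the strategy is to use the description of $\gmf$ together with $\mu$-minimality. If $f$ is monic and $\mu$-minimal, Lemma \ref{minimal0} gives $\mu(f)=\Min\{\mu(a_s\phi_r^s)\mid s\ge0\}$, and in particular $\mu(f)=\mu(a_{s_0})+s_0\ga_r$ for the left endpoint abscissa $s_0=s_r(f)$ of the $\ga_r$-component. I would argue by induction on the length $r$: the coefficient $a_{s_0}$ has degree $<\deg(\phi_r)$, hence is, up to a monic factor structure, governed by the truncated MacLane chain $\minf\to\cdots\to\mu_{r-1}$, so by the inductive hypothesis (or rather a version of (1) applied internally) its $\mu_{r-1}$-value — possibly after noting it is itself a product related to $\mu_{r-1}$-minimal pieces — lies in $\g_{r-1}$. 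Since monicity of $f$ forces the leading coefficient (the top $a_s$) to be $1$, and $\mu$-minimality pins $\mu(f)$ to the left endpoint, one concludes $\mu(f)=\mu_{r-1}(a_{s_0})+s_0\ga_r\in\gen{\g_{r-1},\ga_r}=\g_r$. The care needed is to justify that $\mu_{r-1}(a_{s_0})\in\g_{r-1}$; this is where the recursive structure of the chain, the intersection property \eqref{intersection}, and the behaviour of $e_r$ enter.

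For (3), suppose $f$ has attainable $\mu$-value and $Q_s=(s,\mu_{r-1}(a_s))$ lies on $S_{\ga_r}(f)$. Then by Remark \ref{mug} (applied to $\mu_r=[\mu_{r-1};\phi_r,\ga_r]$) we have $\mu_{r-1}(a_s)+s\ga_r=\mu(f)\in\g_r$. So $\mu_{r-1}(a_s)=\mu(f)-s\ga_r$. Now $\mu(f)\in\g_r=\gen{\g_{r-1},\ga_r}$ and $s\ga_r\in\gen{\ga_r}$, so $\mu_{r-1}(a_s)\in\gen{\g_{r-1},\ga_r}$; but $\mu_{r-1}(a_s)\in\g_{\mu_{r-1}}$, and by \eqref{intersection} we have $\g_r\cap\g_{\mu_{r-1}}=\g_{r-1}$. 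Hence $\mu_{r-1}(a_s)\in\g_{r-1}=\g_{\mu_{r-1}}^{\op{fg}}$, which is precisely the assertion that $a_s$ has attainable $\mu_{r-1}$-value. This third part is the cleanest; the genuine obstacle is item (2), where one must feed the monic/$\mu$-minimal hypothesis through the recursion in a way that controls the $\mu_{r-1}$-value of the lowest-abscissa coefficient. I expect the proof to dispatch (1) and (3) in a line or two each and to spend its effort setting up the inductive argument for (2), very likely invoking Lemma \ref{minimal0} and the fact from Proposition \ref{vphi} that $\g_{\mu_{r-1}}=\g_{\mu,m_r}$ is the value group of $v_{\phi_r}$, so that $\mu(f)$ being attainable is equivalent to the residue-side datum being defined over the finitely-generated groups.
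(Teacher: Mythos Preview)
Your arguments for items (1) and (3) are essentially the paper's: for (1), from $\gm=\g+\g_r$ one writes $\mu(f)=\be+\ga'$ with $\be\in\g=v(K^*)$, picks $a\in K^*$ with $v(a)=-\be$, and gets $\mu(af)=\ga'\in\g_r$ --- no ``cofinal'' correction is needed; for (3), your use of the identity $\mu_{r-1}(a_s)+s\ga_r=\mu(f)$ together with the intersection property \eqref{intersection} is exactly the paper's proof.

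The gap is in (2). You correctly note that monicity forces $a_\ell=1$, but you then abandon this observation and try to work at the \emph{left} endpoint $s_0$ of $S_{\ga_r}(f)$, launching an induction meant to show $\mu_{r-1}(a_{s_0})\in\g_{r-1}$. That induction does not go through as stated: $a_{s_0}$ is just a polynomial of degree $<m_r$, with no reason to be monic or $\mu_{r-1}$-minimal, so the inductive hypothesis has nothing to bite on; and you cannot invoke anything like (3) to force $\mu_{r-1}(a_{s_0})\in\g_{r-1}$ without already knowing $\mu(f)\in\g_r$, which is the goal. The paper instead uses the \emph{right} endpoint. By \cite[Prop.~3.7]{KeyPol}, $\mu$-minimality of $f$ means the leading monomial $a_\ell\phi_r^\ell$ achieves $\mu(f)$ and $\deg(a_\ell)=0$; combined with monicity this gives $a_\ell=1$, hence $\mu(f)=\mu(\phi_r^\ell)=\ell\ga_r\in\g_r$ in one line. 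So the fix is simply to look at the top of the $\phi_r$-expansion rather than the bottom.
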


\begin{proof}
The first item follows from $\g_\mu=\g+\gen{\ga_0,\dots,\ga_r}=\g+\g_r$.

If $f$ is monic and $\mu$-minimal, then $a_\ell=1$ and $\mu(f)=\mu(\phi_r^\ell)=\ell\ga_r\in\gmf$ by \cite[Prop. 3.7]{KeyPol}. 

The condition $Q_s\in S_{\ga_r}(f)$ implies $\mu(f)=\mu_{r-1}(a_s)+s\ga_r$. 
If $\mu(f)\in\g_r$, then  (\ref{intersection}) shows that $\mu_{r-1}(a_s)\in \g_{\mu_{r-1}}\cap \g_r=\g_{r-1}$.  
\end{proof}

\begin{definition}\label{defcj}
Let us define a residual polynomial operator:
$$
R_r\colon \kxfg\lra  \ka_r[y],\qquad f\longmapsto R_r(f)=c_0+c_1\,y+\cdots+c_d\,y^d.
$$
We agree that $R_r(0)=0$, and for a non-zero $f\in\kxfg$ as in (\ref{fexp}), we take
$$
c_j=\begin{cases}
p_r^{-\mu_{r-1}(a_{s_j})}\hm(a_{s_j})\in \ka_r^*,&\quad \mbox{ if }\ Q_{s_j}\mbox{ lies on }N_r(f),\\
0,&\quad \mbox{ otherwise}.
     \end{cases}
$$ 
\end{definition}

Since $\deg(a_{s_j})<\deg(\phi_r)$, $\hm(a_{s_j})$ is a unit in $\gg$ \cite[Prop. 3.5]{KeyPol}.
By Proposition \ref{extension} and Lemma \ref{agisatt}, $\mu(a_{s_j})=\mu_{r-1}(a_{s_j})\in\g_{r-1}$.
Therefore, if $Q_{s_j}$ lies on $N_r(f)$, the coefficient $c_j$ is a homogeneous unit of degree zero; that is, $c_j\in\Delta^*=\ka_r^*$.

If we normalize $R_r(f)$, we obtain the normalized residual polynomial $\Rh_r(f)$:
\begin{equation}\label{RRh}
c_d^{-1}R_r(f)=\Rh_r(f).
\end{equation}
In fact, if $Q_{s_j}$ lies on $N_r(f)$, then
$$
p_r^{-\mu_{r-1}(a_{s_j})}/p_r^{-\mu_{r-1}(a_{s_d})}=p_r^{\mu_{r-1}(a_{s_d})-\mu_{r-1}(a_{s_j})}=p_r^{u_d-u_j}=p_r^{-h_r(d-j)}.
$$

The operator $\Rh_r$ depends on the choice of $\phi$ and the unit $\epsilon=p_r^{-h_r}$, while $R_r$ depends on the choice of $\phi$ and the homomorphism $p_r$ from Definition \ref{ratfsGr}.

In section \ref{subsecRecursive}, we shall prove that the operator $R_r$ admits a recursive computation involving the previous operators $R_0,\dots,R_{r-1}$ attached to the intermediate valuations $\mu_0,\dots,\mu_{r-1}$ of the MacLane chain of $\mu$.\e

If $\g$ is finitely generated, we may take $\g_{-1}=\g$ as a universal choice for this subgroup. This implies $\g_i=\g_{\mu_i}$ for all $i$. Then, $\kxfg=\kx$ and the residual polynomial $R_r(f)$ is defined for all $f\in\kx$.

Otherwise, in any situation involving a finite number of polynomials, we may always assume that the subgroup $\g_{-1}$ is sufficiently large to allow the application of the operator $R_r$ to all the given polynomials.

\subsection{Basic properties of the operator $R_r$}\label{secBasicR}
The first basic properties of $R_r$ follow immediately from the fact that $c_0,c_d\ne0$.

\begin{lemma}\label{basicR}
Let $f\in\kxfg$ be a non-zero polynomial. Then,
$$\deg(R_r(f))=(s'_r(f)-s_r(f))/e_r\quad\mbox{ and }\quad R_r(f)(0)\in \ka_r^*.
$$
\end{lemma}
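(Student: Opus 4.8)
The plan is to read off both claims directly from Definition~\ref{defcj} together with the geometry of the $\ga_r$-component described just before it. By construction $R_r(f)=c_0+c_1y+\cdots+c_dy^d$ where $d=(s'_r(f)-s_r(f))/e_r$, so for the degree statement it suffices to check that the top coefficient $c_d$ is non-zero; and for the second statement it suffices to check that the constant term $c_0$ is non-zero. Both reduce to the observation that the endpoints $P_0=(s_0,u_0)$ and $P_d=(s_d,u_d)$ of $S_{\ga_r}(f)$ are \emph{vertices} of the Newton polygon $N_r(f)$, hence the corresponding cloud points $Q_{s_0}$ and $Q_{s_d}$ actually lie on $N_r(f)$ (indeed $P_0=Q_{s_0}$ and $P_d=Q_{s_d}$, as already noted in the text preceding the definition).

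First I would recall why $Q_{s_0}$ and $Q_{s_d}$ lie on $N_r(f)$: the $\ga_r$-component $S_{\ga_r}(f)$ is by Definition~\ref{sla} the intersection of $N_r(f)$ with the supporting line of slope $-\ga_r$ that first touches $N_r(f)$ from below, so its endpoints are forced to be points of the polygon, and since the polygon is the lower convex hull of the cloud $\cc=\{Q_s\}$, these endpoints are in fact cloud points. Thus in Definition~\ref{defcj} we are in the first case for both $j=0$ and $j=d$, giving $c_0=p_r^{-\mu_{r-1}(a_{s_0})}H_\mu(a_{s_0})$ and $c_d=p_r^{-\mu_{r-1}(a_{s_d})}H_\mu(a_{s_d})$. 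By the discussion following Definition~\ref{defcj}, $H_\mu(a_{s_j})$ is a unit in $\gg$ (because $\deg(a_{s_j})<\deg(\phi_r)$, \cite[Prop.~3.5]{KeyPol}), and $\mu(a_{s_j})=\mu_{r-1}(a_{s_j})\in\g_{r-1}$ by Proposition~\ref{extension} and Lemma~\ref{agisatt}(3); hence $p_r^{-\mu_{r-1}(a_{s_j})}$ is a well-defined homogeneous unit of degree $-\mu_{r-1}(a_{s_j})$, and the product $c_j$ is a homogeneous unit of degree zero, i.e.\ $c_j\in\Delta^*=\ka_r^*$. In particular $c_0\ne 0$ and $c_d\ne 0$.

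Granting $c_d\ne 0$, the polynomial $R_r(f)$ has degree exactly $d=(s'_r(f)-s_r(f))/e_r$, which is the first assertion; and $R_r(f)(0)=c_0\in\ka_r^*$, which is the second. A minor point to address is the degenerate case $s_r(f)=s'_r(f)$, i.e.\ $d=0$: then $S_{\ga_r}(f)$ is a single vertex, $R_r(f)=c_0$ is a non-zero constant in $\ka_r^*$, $\deg(R_r(f))=0=(s'_r(f)-s_r(f))/e_r$, and both statements still hold. There is no real obstacle here; the only care needed is to make sure the two facts invoked from the lead-up to Definition~\ref{defcj}—that $Q_{s_0},Q_{s_d}$ lie on $N_r(f)$, and that the attainability hypothesis $f\in\kxfg$ via Lemma~\ref{agisatt}(3) puts $\mu_{r-1}(a_{s_j})$ in $\g_{r-1}$ so that $p_r^{-\mu_{r-1}(a_{s_j})}$ makes sense—are explicitly cited, since these are exactly what guarantees $c_0,c_d\in\ka_r^*$ rather than merely being nonzero homogeneous elements.
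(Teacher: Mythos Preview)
Your proposal is correct and follows essentially the same approach as the paper, which simply observes that the lemma is immediate from $c_0,c_d\ne 0$. You have merely spelled out in detail why $c_0,c_d\in\ka_r^*$, using the facts already recorded in the discussion surrounding Definition~\ref{defcj}.
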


The essential property of the operator $R_r$ is described in the next result.

\begin{theorem}\label{Hmug}
For any $f\in\kxfg$, we have 
$$\hm(f)=x_r^{s_r(f)}p_r^{u_r(f)}R_r(f)(y_r).$$
\end{theorem}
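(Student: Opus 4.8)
The plan is to compute $\hm(f)$ directly from the $\phi_r$-expansion $f=\sum_{s}a_s\phi_r^s$ and match it against the right-hand side. First I would recall that $\mu(f)=\Min\{\mu_{r-1}(a_s)+s\ga_r\mid s\ge0\}$, and that by the definition of $S_{\ga_r}(f)$ together with Lemma \ref{j}, the terms $a_s\phi_r^s$ achieving this minimum are exactly those with $s=s_j$ for $0\le j\le d$ and $Q_{s_j}$ lying on $N_r(f)$. For every other $s$ one has $\mu(a_s\phi_r^s)>\mu(f)$, so $\hm(a_s\phi_r^s)$ contributes $0$ in $\gg$; hence by the additivity property (\ref{Hmu}) of $\hm$ on sums of equal minimal value,
$$
\hm(f)=\sum_{\substack{0\le j\le d\\ Q_{s_j}\text{ on }N_r(f)}}\hm(a_{s_j})\,\hm(\phi_r)^{s_j}=\sum_{\substack{0\le j\le d\\ Q_{s_j}\text{ on }N_r(f)}}\hm(a_{s_j})\,x_r^{s_j}.
$$
Here I use that $\hm$ is multiplicative and that $x_r=\hm(\phi_r)$ by Definition \ref{ratfsGr}.

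Next I would substitute $s_j=s_0+j e_r$, pull out $x_r^{s_0}=x_r^{s_r(f)}$, and rewrite $x_r^{j e_r}$ using the relation $y_r=x_r^{e_r}p_r^{-h_r}$ from Definition \ref{ratfsGr} (equivalently $x_r^{e_r}=y_r\,p_r^{h_r}$). This yields $x_r^{j e_r}=y_r^{\,j}\,p_r^{j h_r}$. For the coefficient I would insert the definition $c_j=p_r^{-\mu_{r-1}(a_{s_j})}\hm(a_{s_j})$, so $\hm(a_{s_j})=c_j\,p_r^{\mu_{r-1}(a_{s_j})}$, where this makes sense because $\mu_{r-1}(a_{s_j})=\mu(a_{s_j})\in\g_{r-1}$ by Lemma \ref{agisatt}(3) and Proposition \ref{extension} (as already remarked after Definition \ref{defcj}). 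Combining, the $j$-th term becomes
$$
c_j\,p_r^{\mu_{r-1}(a_{s_j})}\,x_r^{s_0}\,y_r^{\,j}\,p_r^{j h_r}
= x_r^{s_r(f)}\,p_r^{\mu_{r-1}(a_{s_j})+j h_r}\,c_j\,y_r^{\,j}.
$$
It remains to identify the exponent of $p_r$: since $Q_{s_j}$ on $N_r(f)$ forces $u_j=\mu_{r-1}(a_{s_j})$ only when $Q_{s_j}\in S_{\ga_r}(f)$ exactly, I would instead use $\mu(f)=\mu_{r-1}(a_{s_j})+s_j\ga_r$ on the component, i.e. $\mu_{r-1}(a_{s_j})=u_j=u_0-j h_r$; hence $\mu_{r-1}(a_{s_j})+j h_r=u_0=u_r(f)$, independent of $j$. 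Therefore every term carries the common factor $x_r^{s_r(f)}p_r^{u_r(f)}$, and summing over $j$ gives $x_r^{s_r(f)}p_r^{u_r(f)}\sum_j c_j y_r^{\,j}=x_r^{s_r(f)}p_r^{u_r(f)}R_r(f)(y_r)$, with the convention that $c_j=0$ when $Q_{s_j}$ is not on $N_r(f)$, matching Definition \ref{defcj}.

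The main obstacle is bookkeeping rather than depth: one must be careful that the "extra" cloud points $Q_{s_j}$ lying strictly above $P_j$ (as emphasized in the discussion around Figure \ref{figAlpha}) genuinely contribute $0$ to $\hm(f)$ — this is where $\mu(a_{s_j}\phi_r^{s_j})>\mu(f)$ is used and is precisely why $c_j=0$ in that case — and that all the manipulations with $p_r^{\alpha}$ stay inside the subgroup $\g_{r-1}$ so that the homomorphism $p_r\colon\g_{r-1}\to\gg_{\mu_r}^*$ of Definition \ref{ratfsGr} is defined on every exponent that appears; both points are handled by Lemma \ref{agisatt}. A minor additional check is the edge case $d=0$ (and $r=0$, where $\mu_{-1}=v$), where the formula reduces to $\hm(f)=x_r^{s_r(f)}p_r^{u_r(f)}c_0$ with $c_0=R_r(f)(0)\in\ka_r^*$, consistent with Lemma \ref{basicR}.
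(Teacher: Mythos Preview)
Your proof is correct and follows essentially the same approach as the paper: isolate the terms $a_{s_j}\phi_r^{s_j}$ with $j\in J$ (those achieving the minimal $\mu$-value), apply the additivity property (\ref{Hmu}), and then use the relation $x_r^{e_r}=y_r\,p_r^{h_r}$ together with $\mu_{r-1}(a_{s_j})=u_0-jh_r$ to factor out $x_r^{s_r(f)}p_r^{u_r(f)}$. The only cosmetic difference is that the paper performs the factorization first in $K(x)$ via the rational functions $\pi_r^{u_0}$ and $Y_r$ (equation (\ref{preMain})) and then applies $\hm$, whereas you pass to the graded algebra immediately and manipulate $x_r$, $p_r$, $y_r$ there; the underlying computation is identical.
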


\begin{proof}
Recall the notation $s_0=s_r(f)$, $u_0=u_r(f)$.
Consider the set of indices
$$
J=\left\{0\le j\le d\mid Q_{s_j}\mbox{ lies on }N_r(f)\right\}=\left\{0\le j\le d\mid \mu\left(a_{s_j}\phi_r^{s_j}\right)=\mu(f)\right\}.
$$
All indices $s\not\in \{s_j\mid j\in J\}$ have $\mu\left(a_s\phi_r^s\right)>\mu(f)$. Hence,
$$
f\sim_{\mu}f_J,\quad\mbox{ where }\quad f_J=\sum\nolimits_{j\in J}a_{s_j}\phi_r^{s_j}.
$$

If $f\in\kxfg$, Lemma \ref{agisatt} shows that   $u_0=\mu_{r-1}(a_{s_0})\in\g_{r-1}$. Thus, 
\begin{equation}\label{preMain}
f_J=\phi_r^{s_0}\pi_r^{u_0}\sum\nolimits_{j\in J}b_jY_r^j,\qquad b_j=\pi_r^{jh_r-u_0}a_{s_j}\in K(x),
\end{equation}
where $\pi_r^{u_0}$, $\;Y_r=\phi_r^{e_r}\pi_r^{-h_r}\in K(x)$ are the rational functions introduced in section \ref{subsecRat}. 

For all $j\in J$, we have $\mu_{r-1}(a_{s_j})=u_0-jh_r$. Therefore, 
$$
\hm(b_j)=p_r^{jh_r-u_0}\hm(a_{s_j})=p_r^{-\mu_{r-1}(a_{s_j})}\hm(a_{s_j})=c_j.
$$

Since $f\sim_{\mu}f_J$, the result follows from the application of $\hm$ to both sides of equation (\ref{preMain}), having in mind equation (\ref{Hmu}). 
\end{proof}

Thus, the homogeneous element $\hm(f)$ splits into a product of a power of the prime $x_r$, times a unit, times a degree-zero element $R_r(f)(y_r)\in\Delta$.

We now derive from Theorem \ref{Hmug} some more basic properties of the operator $R_r$.

\begin{theorem}\label{Delta}
Consider the $\ka_r$-algebra isomorphism $\ka_r[y]\simeq\Delta$ induced by $y\mapsto y_r$. For each $g\in\kx$ having $\mu(g)=0$, the inverse isomorphism assigms $$\hm(g)\;\longmapsto\; y^{s_r(g)/e_r}R_r(g).$$ 
\end{theorem}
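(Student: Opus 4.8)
The plan is to deduce the statement directly from Theorem \ref{Hmug}. Since $\mu(g)=0$ lies in $\g_r$, the polynomial $g$ belongs to $\kxfg$, so Theorem \ref{Hmug} applies and yields
$$
\hm(g)=x_r^{s_r(g)}\,p_r^{u_r(g)}\,R_r(g)(y_r).
$$
All that then remains is to rewrite the prefactor $x_r^{s_r(g)}\,p_r^{u_r(g)}$ as a single power of $y_r$, which is where the hypothesis $\mu(g)=0$ gets used.

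First I would extract two consequences of $\mu(g)=0$. By Remark \ref{mug}, the left endpoint $(s_r(g),u_r(g))$ of the $\ga_r$-component of $N_r(g)$ satisfies $u_r(g)+s_r(g)\ga_r=\mu(g)=0$, hence $u_r(g)=-s_r(g)\ga_r$. On the other hand, Lemma \ref{agisatt}(3) gives $u_r(g)=\mu_{r-1}(a_{s_r(g)})\in\g_{r-1}$; therefore $s_r(g)\ga_r\in\g_{r-1}$, and since the quotient $\g_r/\g_{r-1}\simeq\Z/e_r\Z$ is generated by the class of $\ga_r$ by (\ref{intersection}), this forces $e_r\mid s_r(g)$. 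Setting $m=s_r(g)/e_r\in\Z_{\ge0}$, I also get $u_r(g)=-m\,e_r\ga_r=-m\,h_r$.

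Next I would substitute, using the identity $y_r=x_r^{e_r}p_r^{-h_r}$ recorded in the proof of Lemma \ref{j}:
$$
x_r^{s_r(g)}\,p_r^{u_r(g)}=\bigl(x_r^{e_r}\bigr)^{m}\bigl(p_r^{-h_r}\bigr)^{m}=y_r^{\,m},
$$
so that $\hm(g)=y_r^{\,s_r(g)/e_r}\,R_r(g)(y_r)$. Since $\mu(g)=0$, the element $\hm(g)$ is homogeneous of degree $0$, hence lies in $\Delta$; and by Theorem \ref{kstructure}, applied with the unit $\epsilon=p_r^{-h_r}$ (so that $\xi=y_r$), the assignment $y\mapsto y_r$ induces a $\ka_r$-algebra isomorphism $\ka_r[y]\simeq\Delta$. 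The displayed identity says exactly that this isomorphism carries $y^{s_r(g)/e_r}R_r(g)$ to $\hm(g)$, equivalently that its inverse sends $\hm(g)$ to $y^{s_r(g)/e_r}R_r(g)$, which is the claim.

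The only place where a short argument is needed instead of a bare citation is the divisibility $e_r\mid s_r(g)$ — this is precisely where $\mu(g)=0$ is genuinely used, via the attainability of $u_r(g)$ in $\g_{r-1}$. Apart from that I expect no real obstacle: the statement is a formal corollary of Theorem \ref{Hmug} together with the description $\Delta=\ka_r[y_r]$ from Theorem \ref{kstructure}, so the write-up should be a few lines.
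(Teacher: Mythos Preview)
Your proof is correct and follows essentially the same route as the paper: apply Theorem \ref{Hmug} to write $\hm(g)=x_r^{s_r(g)}p_r^{u_r(g)}R_r(g)(y_r)$, then use $\mu(g)=0$ to rewrite the prefactor as $y_r^{s_r(g)/e_r}$. The only difference is packaging: where you derive $e_r\mid s_r(g)$ and the identity $x_r^{s_r(g)}p_r^{u_r(g)}=y_r^{s_r(g)/e_r}$ by hand from $u_r(g)=-s_r(g)\ga_r\in\g_{r-1}$, the paper simply invokes Lemma \ref{j} applied to the pairs $(s_r(g),u_r(g))$ and $(0,0)$, which yields both conclusions at once.
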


\begin{proof}
Let $g\in K[x]$ with $\mu(g)=0$. 
By Lemma \ref{j}, applied to the pairs $(s_r(g),u_r(g))$ and $(0,0_\g)$, there exists an integer $j\ge0$ such that 
$$s_r(g)=je_r,\quad \mbox{and}\quad x_r^{s_r(g)}\,p_r^{u_r(g)}=y_r^j.
$$

By Theorem \ref{Hmug}, $\hm(g)= y_r^j\, R_{r}(g)(y_r)$. 
\end{proof}

\begin{corollary}\label{prodR} Let $f,g\in\kxfg$. Then, $R_r(fg)=R_r(f)R_r(g)$.
\end{corollary}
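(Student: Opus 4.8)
The statement $R_r(fg)=R_r(f)R_r(g)$ should follow from Theorem \ref{Hmug} together with the multiplicativity of the initial-term map $H_\mu$. The plan is to compare the two expressions one gets for $H_\mu(fg)$: on one side, Theorem \ref{Hmug} applied directly to $fg$; on the other side, the product of the expressions obtained by applying Theorem \ref{Hmug} to $f$ and to $g$ separately, using $H_\mu(fg)=H_\mu(f)H_\mu(g)$.

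\textbf{Key steps.} First I would note that $fg\in\kxfg$ whenever $f,g\in\kxfg$, since $\mu(fg)=\mu(f)+\mu(g)\in\g_r$ because $\g_r$ is a group; so $R_r(fg)$ is defined. If either $f$ or $g$ is zero both sides vanish, so assume both are non-zero. By Theorem \ref{Hmug},
$$
H_\mu(f)=x_r^{s_r(f)}p_r^{u_r(f)}R_r(f)(y_r),\qquad
H_\mu(g)=x_r^{s_r(g)}p_r^{u_r(g)}R_r(g)(y_r),
$$
and
$$
H_\mu(fg)=x_r^{s_r(fg)}p_r^{u_r(fg)}R_r(fg)(y_r).
$$
Multiplying the first two and using $H_\mu(fg)=H_\mu(f)H_\mu(g)$ (multiplicativity of $H_\mu$) gives
$$
x_r^{s_r(fg)}p_r^{u_r(fg)}R_r(fg)(y_r)=x_r^{s_r(f)+s_r(g)}p_r^{u_r(f)+u_r(g)}R_r(f)(y_r)R_r(g)(y_r).
$$
Next I would invoke the additivity of the abscissa of the left endpoint of the relevant $\ga_r$-component: by Lemma \ref{additivity}(1) (via Lemma \ref{sumSla}) we have $s_r(fg)=s_r(f)+s_r(g)$, and since $\mu(fg)=\mu(f)+\mu(g)$ together with $u_r(\cdot)+s_r(\cdot)\ga_r=\mu(\cdot)$ (Remark \ref{mug}) we also get $u_r(fg)=u_r(f)+u_r(g)$; these are exactly the matching-exponent identities proved inside the proof of Lemma \ref{sumSla}. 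Hence the prefactors $x_r^{\bullet}p_r^{\bullet}$ on both sides agree, and since $x_r$ is a prime (hence a non-zero-divisor) in the domain $\gg$ and $p_r^{\bullet}$ is a unit, I can cancel them to obtain $R_r(fg)(y_r)=R_r(f)(y_r)R_r(g)(y_r)$ in $\Delta$. Finally, by Theorem \ref{kstructure} the element $y_r$ is transcendental over $\ka_r$ and $\Delta=\ka_r[y_r]$, so the evaluation map $\ka_r[y]\to\Delta$, $y\mapsto y_r$, is an isomorphism of $\ka_r$-algebras; therefore the equality of the evaluated polynomials forces $R_r(fg)=R_r(f)R_r(g)$ as elements of $\ka_r[y]$.

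\textbf{Main obstacle.} The only subtlety is the bookkeeping showing $s_r(fg)=s_r(f)+s_r(g)$ and $u_r(fg)=u_r(f)+u_r(g)$; but this is precisely what Lemma \ref{sumSla} (equivalently Lemma \ref{additivity}) delivers once one observes that $\mu(fg)=\mu(f)+\mu(g)$, so there is no real difficulty — the cancellation of the monomial prefactor in the graded domain and the transcendence of $y_r$ do the rest. An even quicker route, which I would mention as an alternative, is to bypass the prefactors entirely: multiply $f$ and $g$ by suitable constants $a,b\in K^*$ so that $af$ and $bg$ have $\mu$-value zero (possible by Lemma \ref{agisatt}(1) after enlarging $\g_{-1}$ if needed, or simply by the remark that $\mu(\cdot)\in\g_\mu=\g+\g_r$), apply Theorem \ref{Delta} to $af$, $bg$ and $abfg$, use multiplicativity of $H_\mu$ again, and cancel the factor $y^{\bullet}$ in $\ka_r[y]$; the scalars $a,b$ only scale $R_r$ by units of $\ka_r$ and drop out of the final identity.
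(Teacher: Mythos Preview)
Your proof is correct and follows essentially the same approach as the paper: apply Theorem \ref{Hmug} to $f$, $g$, and $fg$, use $H_\mu(fg)=H_\mu(f)H_\mu(g)$, invoke Lemma \ref{sumSla} for the additivity of $s_r$ and $u_r$, and then conclude via Theorem \ref{kstructure}. Your preliminary check that $fg\in\kxfg$ and the alternative argument via Theorem \ref{Delta} are extra details not in the paper, but the core reasoning is the same.
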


\begin{proof}
Since $\hm(fg)=\hm(f)\hm(g)$ and $p_r$ is a group homomorphism, the statement follows from  Theorems \ref{Hmug} and \ref{kstructure}, as long as:
$$
s_r(fg)=s_r(f)+s_r(g)\quad\mbox{ and }\quad u_r(fg)=u_r(f)+u_r(g).
$$
These identities follow from Lemma \ref{sumSla}.
\end{proof}

\begin{corollary}\label{equivR} Let $f,g\in\kxfg$. Then, 
$$
\as{1.3}
\begin{array}{ccl}
f \smu g&\iff&s_r(f)=s_r(g), \ u_r(f)=u_r(g) \ \mbox{ and }\ R_r(f)=R_r(g).\\
f \mmu g&\iff&s_r(f)\le s_r(g)\ \mbox{ and }\  R_r(f)\mid R_r(g) \ \mbox{ in }\  \ka_r[y].
\end{array}
$$
\end{corollary}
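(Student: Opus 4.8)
The plan is to deduce both equivalences from the explicit factorization of Theorem~\ref{Hmug}, $\hm(f)=x_r^{s_r(f)}\,p_r^{u_r(f)}\,R_r(f)(y_r)$, combined with three structural ingredients: that $x_r=\hm(\phi_r)$ is a prime element of the domain $\gg$ and that, by Lemma~\ref{additivity}(1), the order to which $x_r$ divides $\hm(g)$ equals $s_r(g)$ (so $\ord_{x_r}$ is additive on $\gg$); that each $p_r^\alpha$ ($\alpha\in\g_{r-1}$) is a unit of $\gg$ homogeneous of degree $\alpha$; and that $\Delta=\ka_r[y_r]$ with $y_r$ transcendental over $\ka_r$ (Theorem~\ref{kstructure}), so that evaluation at $y_r$ is a ring isomorphism $\ka_r[y]\iso\Delta$ and $\Delta$ is precisely the homogeneous component of $\gg$ in degree $0$. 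Throughout we may assume $f,g\neq0$, the zero cases being trivial, and we note that $u_r(f),u_r(g)\in\g_{r-1}$ by Lemma~\ref{agisatt}(3) since $f,g\in\kxfg$, so that $p_r^{u_r(f)},\,p_r^{u_r(g)}$ and their integer powers are defined.

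First I would treat the $\smu$-equivalence. The implication ``$\Leftarrow$'' is immediate from Theorem~\ref{Hmug}. For ``$\Rightarrow$'', from $\hm(f)=\hm(g)$ I would take $x_r$-adic orders to get $s_r(f)=s_r(g)$ by Lemma~\ref{additivity}(1); then compare $\gm$-degrees of the two homogeneous elements to get $\mu(f)=\mu(g)$, which together with $\mu(f)=u_r(f)+s_r(f)\ga_r$ forces $u_r(f)=u_r(g)$; finally cancel the common factor $x_r^{s_r(f)}p_r^{u_r(f)}$ in the domain $\gg$ to obtain $R_r(f)(y_r)=R_r(g)(y_r)$ in $\Delta$, and conclude $R_r(f)=R_r(g)$ using the transcendence of $y_r$ over $\ka_r$.

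Next the $\mmu$-equivalence. For ``$\Leftarrow$'', write $R_r(g)=R_r(f)\,P$ with $P\in\ka_r[y]$; then multiplying the factorization of $\hm(f)$ by $x_r^{\,s_r(g)-s_r(f)}\,p_r^{\,u_r(g)-u_r(f)}\,P(y_r)$ produces $\hm(g)$, and this factor lies in $\gg$ because $s_r(g)-s_r(f)\ge0$, because $p_r^{\,u_r(g)-u_r(f)}$ is a genuine unit, and because $P(y_r)\in\Delta\subset\gg$; hence $f\mmu g$. For ``$\Rightarrow$'', write $\hm(g)=\hm(f)\,\eta$ with $\eta\in\gg$ homogeneous. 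Comparing $x_r$-adic orders gives $s_r(f)\le s_r(g)$ and pins down $\ord_{x_r}(\eta)=s_r(g)-s_r(f)=:\delta$, so $\eta=x_r^{\delta}\eta'$ with $\eta'$ homogeneous of degree $u_r(g)-u_r(f)\in\g_{\mu_{r-1}}$; since $p_r^{\,u_r(g)-u_r(f)}$ is a homogeneous unit of exactly that degree, $\delta_0:=p_r^{\,u_r(f)-u_r(g)}\eta'$ is homogeneous of degree $0$, hence belongs to $\Delta$. Substituting $\eta=x_r^{\delta}\,p_r^{\,u_r(g)-u_r(f)}\,\delta_0$ and cancelling $x_r^{s_r(g)}p_r^{u_r(g)}$ yields $R_r(g)(y_r)=R_r(f)(y_r)\,\delta_0$ in $\Delta$, and transporting this along $\ka_r[y]\iso\Delta$ gives $R_r(f)\mid R_r(g)$ in $\ka_r[y]$.

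The step I expect to need the most care is the one used in the hard direction of the divisibility statement: splitting a homogeneous quotient $\eta\in\gg$ as a power of $x_r$ times a homogeneous unit times an element of $\Delta$. This relies on $x_r$ being prime, so that the $x_r$-adic order behaves additively on the domain $\gg$ (the content of Lemma~\ref{additivity}(1)), and on the identification of $\Delta$ with the degree-$0$ part of $\gg$: once the $x_r$-adic order of $\eta$ is subtracted off, the residual homogeneous factor has degree lying in the value group $\g_{\mu_{r-1}}$ of the units $p_r^{\alpha}$, and dividing by the corresponding unit lands it in $\Delta$. All the remaining work is routine bookkeeping with degrees and $x_r$-adic orders.
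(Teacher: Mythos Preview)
Your argument is correct. For the first equivalence your proof coincides with the paper's: both directions go through Theorem~\ref{Hmug} and the transcendence of $y_r$ over $\ka_r$, the only cosmetic difference being that the paper recovers $(s_r(f),u_r(f))=(s_r(g),u_r(g))$ from the equality $S_{\ga_r}(f)=S_{\ga_r}(g)$ of $\ga_r$-components, while you extract $s_r$ via the $x_r$-adic order (Lemma~\ref{additivity}(1)) and $u_r$ from equality of $\gm$-degrees.

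For the second equivalence you take a genuinely different route. The paper does not argue directly in $\gg$; it simply invokes an external result \cite[Cor.~5.5]{KeyPol} establishing the analogous statement for the normalized operator $\widehat{R}_r$, and then transports it to $R_r$ via the relation $c_d^{-1}R_r(f)=\widehat{R}_r(f)$ of~(\ref{RRh}). Your proof, by contrast, is self-contained: you decompose the homogeneous quotient $\eta=\hm(g)/\hm(f)$ as $x_r^{\delta}\,p_r^{u_r(g)-u_r(f)}\,\delta_0$ with $\delta_0\in\Delta$, using that $x_r$ is prime in the graded domain $\gg$ and that the degree-zero part is exactly $\Delta=\ka_r[y_r]$. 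This is clean and avoids the external citation; the only point to police (which you handle) is that $u_r(f),u_r(g)\in\g_{r-1}$ by Lemma~\ref{agisatt}(3), so that $p_r^{u_r(g)-u_r(f)}$ is indeed defined. The paper's approach is shorter because the work was done elsewhere; yours makes the structural reason transparent.
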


\begin{proof}
If $f \smu g$, then $S_{\ga_r}(f)=S_{\ga_r}(g)$ by Lemma \ref{additivity}. In particular, these segments have the same left endpoint: $(s_r(f),u_r(f))=(s_r(g),u_r(g))$. By Theorem \ref{Hmug}, $R_r(f)(y_r)=R_r(g)(y_r)$,  and we deduce
$R_r(f)=R_r(g)$ from Theorem \ref{kstructure}.

Conversely, the equalities $(s_r(f),u_r(f))=(s_r(g),u_r(g))$ and $R_r(f)=R_r(g)$ lead to $\hm(f)=\hm(g)$ by Theo\-rem \ref{Hmug}. 

The second equivalence follows from the analogous statement for the normalized operator $\widehat{R}_r$ \cite[Cor. 5.5]{KeyPol}.
\end{proof}

\begin{corollary}\label{Radd}
Let $f,g\in\kxfg$ such that $\mu(f)=\mu(g)$. Then,
$$
y^{\lfloor s_r(f)/e_r\rfloor}R_r(f)+y^{\lfloor s_r(g)/e_r\rfloor}R_r(g)=
\begin{cases}
y^{\lfloor s_r(f+g)/e_r\rfloor}R_r(f+g),&\mbox{ if }\mu(f+g)=\mu(f),\\
0,&\mbox{ if }\mu(f+g)>\mu(f). 
\end{cases}
$$
\end{corollary}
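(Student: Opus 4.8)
The plan is to deduce the identity from Theorem \ref{Hmug}, which expresses $\hm(h)=x_r^{s_r(h)}p_r^{u_r(h)}R_r(h)(y_r)$ for every $h\in\kxfg$, together with the two ways in which $\hm$ interacts with a sum. The case $f=0$ or $g=0$ is immediate, so assume both are non-zero. If $\mu(f+g)=\mu(f)$, then equation (\ref{Hmu}) gives $\hm(f+g)=\hm(f)+\hm(g)$; if $\mu(f+g)>\mu(f)$, then $f+g\in\pset_{\mu(f)}^+$, whence $\hm(f)+\hm(g)=0$ in $\ggm$. In either case the claimed identity should drop out once the two (resp. three) terms are brought to a common normal form.

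First I would rewrite each $\hm(h)$ so that only a power of $y_r$ appears explicitly. Writing $s_r(h)=e_r\lfloor s_r(h)/e_r\rfloor+\rho_h$ with $0\le\rho_h<e_r$, and using $x_r^{e_r}=y_r\,p_r^{h_r}$ from Definition \ref{ratfsGr}, one obtains
$$
\hm(h)=x_r^{\rho_h}\,p_r^{\,w_h}\,y_r^{\lfloor s_r(h)/e_r\rfloor}\,R_r(h)(y_r),\qquad w_h:=u_r(h)+h_r\lfloor s_r(h)/e_r\rfloor=\mu(h)-\rho_h\ga_r,
$$
the last equality because $u_r(h)+s_r(h)\ga_r=\mu(h)$. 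Here $w_h\in\g_{r-1}$ (since $u_r(h)\in\g_{r-1}$ by Lemma \ref{agisatt}(3) and $h_r\in\g_{r-1}$), so $p_r^{w_h}$ is a unit of $\ggm$; the point of this computation is that the prefactor $x_r^{\rho_h}p_r^{w_h}$ depends only on $\mu(h)$ and on the residue $\rho_h$.

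Next I would check that the relevant polynomials share a common residue. Since $\mu(f)=\mu(g)$ and $u_r(f),u_r(g)\in\g_{\mu_{r-1}}$, we have $(s_r(f)-s_r(g))\ga_r=u_r(g)-u_r(f)\in\g_{\mu_{r-1}}$, hence $e_r\mid s_r(f)-s_r(g)$ because $e_r\Z=\{e\mid e\ga_r\in\g_{\mu_{r-1}}\}$; the same argument applied to $f+g$ shows $\rho_{f+g}=\rho_f=\rho_g$ whenever $\mu(f+g)=\mu(f)$, while in the second case $\hm(f)=-\hm(g)\ne0$ forces $s_r(f)=s_r(g)$ directly (equal order of divisibility by the prime $x_r=\hm(\phi_r)$, cf. Lemma \ref{additivity}(1)). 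In all cases the common value of $\mu$ and the common residue make the prefactors $x_r^{\rho}p_r^{w}$ identical across the two or three terms, and since $\ggm$ is a domain and $p_r^{w}$ is a unit, I can cancel them. What survives is an equality in $\Delta=\ka_r[y_r]$ among the terms $y_r^{\lfloor s_r(\cdot)/e_r\rfloor}R_r(\cdot)(y_r)$; by transcendence of $y_r$ over $\ka_r$ (Theorem \ref{kstructure}, equivalently the isomorphism of Theorem \ref{Delta}) this lifts verbatim to an identity in $\ka_r[y]$, yielding $y^{\lfloor s_r(f+g)/e_r\rfloor}R_r(f+g)$ on the right in the first case, and $0$ in the second (after cancelling the common factor $y_r^{\lfloor s_r(f)/e_r\rfloor}$, again using that $\Delta$ is a domain).

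The main — though essentially routine — difficulty is the exponent bookkeeping of the second step, in particular verifying that extracting the floor collapses the $p_r$-exponent to $\mu(h)-\rho_h\ga_r$ uniformly; beyond that, nothing deep is needed once Theorem \ref{Hmug} and the domain property of $\ggm$ are in hand.
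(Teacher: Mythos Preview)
Your proposal is correct and follows essentially the same route as the paper: both arguments apply Theorem \ref{Hmug} to each term, rewrite $x_r^{s_r(\cdot)}p_r^{u_r(\cdot)}$ as $x_r^\rho\,p_r^{w}\,y_r^{\lfloor s_r(\cdot)/e_r\rfloor}$ with a common prefactor (the paper packages this step as an appeal to Lemma \ref{j}, whereas you re-derive the needed divisibility $e_r\mid s_r(f)-s_r(g)$ inline), cancel in the domain $\ggm$, and lift via the transcendence of $y_r$ from Theorem \ref{kstructure}. The only cosmetic difference is that the paper disposes of the case $\mu(f+g)>\mu(f)$ directly via Corollary \ref{equivR}, while you fold it into the same cancellation scheme; your parenthetical about ``cancelling the common factor $y_r^{\lfloor s_r(f)/e_r\rfloor}$'' is unnecessary, but harmless.
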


\begin{proof}
If $\mu(f+g)>\mu(f)$, we have $f\smu-g$. By Corollary \ref{equivR}, 
$$R_r(f)=R_r(-g)=-R_r(g) \quad\mbox{and}\quad  s_r(f)=s_r(-g)=s_r(g).$$ Hence,
$y^{\lfloor s_r(f)/e_r\rfloor}R_r(f)+y^{\lfloor s_r(g)/e_r\rfloor}R_r(g)=0$.\e

Denote $h=f+g$, and suppose that $\mu(h)=\mu(f)=\mu(g)$. Then,
$$\mu(f)=u_r(f)+s_r(f)\ga_r=u_r(g)+s_r(g)\ga_r=u_r(h)+s_r(h)\ga_r.$$
By Lemma \ref{j}, the following integer divisions  have a common remainder $0\le\ell<e_r$:
$$
s_r(f)=j_f\,e_r+\ell,\qquad
s_r(g)=j_g\,e_r+\ell,\qquad
s_r(h)=j_h\,e_r+\ell.
$$

Take $u=\mu(f)-\ell\ga_r=u_r(f)+j_fh_r\in\g_{r-1}$. By Lemma \ref{j},
\begin{equation}\label{xp}
x_r^{s_r(f)}\,p_r^{u_r(f)}=x_r^\ell\,p_r^u\,y_r^{j_f},\quad x_r^{s_r(g)}\,p_r^{u_r(g)}=x_r^\ell\,p_r^u\,y_r^{j_g},\quad x_r^{s_r(h)}\,p_r^{u_r(h)}=x_r^\ell\,p_r^u\,y_r^{j_h}.
\end{equation}

On the other hand, the identity from (\ref{Hmu}) and Theorem \ref{Hmug} show that
$$
x_r^{s_r(f)}p_r^{u_r(f)}R_r(f)(y_r)+x_r^{s_r(g)}p_r^{u_r(g)}R_r(g)(y_r)=
x_r^{s_r(h)}p_r^{u_r(h)}R_r(h)(y_r).
$$
By (\ref{xp}) and Theorem \ref{kstructure}, this identity is equivalent to
$$
y_r^{j_f}R_r(f)+y_r^{j_g}R_r(g)=
y_r^{j_h}R_r(h).
$$
This ends the proof, because  $j_f=\lfloor s_r(f)/e_r\rfloor$, $j_g=\lfloor s_r(g)/e_r\rfloor$, $j_h=\lfloor s_r(h)/e_r\rfloor$.
\end{proof}

\subsection*{Residual polynomial of a constant}
Take $a\in K^*$ with  $\al:=v(a)\in\g_{-1}$. Clearly,  $$S_{\ga_r}(a)=N_r(a)=\{(0,\al)\},\qquad s_r(a)=s'_r(a)=0.$$
By definition, $R_r(a)=p_r^{-\al}\hm(a)\in \ka_r^*$ is a constant.
If $r=0$, then $R_0(a)=\overline{a/\pi_0^{\al}}\in k^*$.
If $r>0$, we apply Proposition \ref{quotbeta}:
$$\pi^{\al}_{r}=\pi^{\al}_{r-1}\, Y_{r-1}^{L_{r-1}(\al)}\quad\imp\quad
p_r^{\al}=p_{r-1}^\al z_{r-1}^{L_{r-1}(\al)},$$
because $z_{r-1}$ is the image of $y_{r-1}=H_{\mu_{r-1}}(Y_{r-1})$ under the homomorphism $\gg_{\mu_{r-1}}\to\gg$.

By Proposition \ref{extension}, $\hm(a)$ is the image of $H_{\mu_{r-1}}(a)$ under the same homomorphism.  This leads to the following recurrence:
$$R_r(a)=z_{r-1}^{-L_{r-1}(\al)} R_{r-1}(a)= \cdots =z_{r-1}^{-L_{r-1}(\al)}\cdots \,z_0^{-L_0(\al)}R_0(a). $$

In particular, if $v(a)=0$, we have $R_r(a)=R_0(a)=\overline{a}\in k^*$.

\begin{lemma}\label{monicR}\mbox{\null}

\begin{enumerate}
\item[(1)] If $f\in\kx$ is monic and $\mu$-minimal, then $R_r(f)$ is a monic polynomial.
\item[(2)] $R_r(\phi_r^s)=1$ for any integer $s\ge0$.
\end{enumerate} 
\end{lemma}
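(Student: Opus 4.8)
The plan is to read both statements directly off Definition \ref{defcj}, exploiting that each polynomial involved has an extremely simple Newton polygon with respect to $\phi_r$.

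For part (1), the key input is the elementary fact recalled inside the proof of Lemma \ref{agisatt}: if $f$ is monic and $\mu$-minimal, then in its $\phi_r$-expansion $(\ref{fexp})$ one has $\deg(f)=\ell\deg(\phi_r)$, the top coefficient is $a_\ell=1$, and $\mu(f)=\ell\ga_r$ (this is \cite[Prop. 3.7]{KeyPol}). First I would observe that the rightmost point of the cloud $\cc$ attached to $N_r(f)$ is then $Q_\ell=(\ell,\mu_{r-1}(a_\ell))=(\ell,0)$; hence it is the right endpoint of $N_r(f)$, and since $0+\ell\ga_r=\mu(f)$ it lies on the $\ga_r$-component $S_{\ga_r}(f)$, so it must be the right endpoint of that component as well. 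Thus $s'_r(f)=\ell$ and, in the notation of Definition \ref{defcj}, $s_d=\ell$ with $a_{s_d}=1$. By Lemma \ref{basicR} the degree of $R_r(f)$ is $d=(s'_r(f)-s_r(f))/e_r$, so $c_d$ is its leading coefficient; since $Q_{s_d}=Q_\ell$ lies on $N_r(f)$, Definition \ref{defcj} gives $c_d=p_r^{-\mu_{r-1}(1)}\hm(1)=1$, because $p_r$ is a group homomorphism and $\hm(1)$ is the identity of $\gg$. Hence $R_r(f)$ is monic.

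For part (2), I would argue directly: the $\phi_r$-expansion of $\phi_r^s$ consists of the single term with $a_s=1$ and all other coefficients zero, so the cloud $\cc$ reduces to the point $(s,0)$; therefore $N_r(\phi_r^s)=S_{\ga_r}(\phi_r^s)=\{(s,0)\}$, $s_r(\phi_r^s)=s'_r(\phi_r^s)=s$, $d=0$, and $R_r(\phi_r^s)=c_0=p_r^{-\mu_{r-1}(1)}\hm(1)=1$. (Alternatively, one deduces (2) from (1): $\phi_r$ is a key polynomial, so $\phi_r^s$ is monic and $\mu$-minimal by a short verification via Lemma \ref{S0}(1), hence $R_r(\phi_r^s)$ is monic of degree $d=0$, i.e. equal to $1$.) The only point that needs care is the identification in part (1) of $Q_\ell$ with the right endpoint of $S_{\ga_r}(f)$; once that is settled the rest is just a substitution into the formula for $c_d$, so I do not foresee any substantial obstacle.
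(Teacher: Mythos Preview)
Your proof is correct and, for part (1), essentially identical to the paper's: both rely on \cite[Prop.~3.7]{KeyPol} to get $a_\ell=1$ and $\mu(f)=\ell\ga_r$, then read off $c_d=p_r^{0}\hm(1)=1$. The only difference is in part (2): where you compute $R_r(\phi_r^s)$ directly from the one-point Newton polygon, the paper instead observes $R_r(\phi_r)=1$ by definition and then invokes the multiplicativity of Corollary~\ref{prodR} to get $R_r(\phi_r^s)=R_r(\phi_r)^s=1$; both routes are one-liners, and yours has the minor advantage of not depending on Corollary~\ref{prodR}.
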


\begin{proof}
If $f$ is monic and $\mu$-minimal, then $f\in\kxfg$ by Lemma \ref{agisatt}. 

The leading monomial of the $\phi_r$-expansion of $f$ is $\phi_r^{s'_r(f)}$ by \cite[Prop. 3.7]{KeyPol}.
Thus, for $d=\deg(R_r(f))$, we have 
$a_{s_d}=1$ and  
$c_d=p_r^{-\mu_{r-1}(1)}\hm(1)=1$.

Finally, $R_r(\phi_r)=1$ by definition. By Corollary \ref{prodR}, $R_r(\phi_r^s)=R_r(\phi_r)^s=1$.
\end{proof}

\subsection*{Existence of polynomials with a prescribed residual polynomial}

\begin{lemma}\label{Runicity}
Consider two polynomials $\varphi,\,\psi\in \ka_r[y]$ such that $\varphi(0)\ne0$, $\psi(0)\ne0$. Suppose that for two pairs $(s,u),\,(s',u')\in\Z_{\ge0}\times\g_{r-1}$ we have
\begin{equation}\label{su}
x_r^s\,p_r^u\,\varphi(y_r)=x_r^{s'}\,p_r^{u'}\,\psi(y_r).
\end{equation}

Then, $s=s'$, $u=u'$ and $\varphi=\psi$.
\end{lemma}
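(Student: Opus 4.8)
The statement to be proven is Lemma \ref{Runicity}: if $x_r^s\,p_r^u\,\varphi(y_r)=x_r^{s'}\,p_r^{u'}\,\psi(y_r)$ in $\gg$ with $\varphi(0),\psi(0)\ne0$ and $(s,u),(s',u')\in\Z_{\ge0}\times\g_{r-1}$, then $s=s'$, $u=u'$ and $\varphi=\psi$. The guiding idea is that the left-hand side is, up to normalization, exactly the decomposition produced by Theorem \ref{Hmug}: it is the initial term of a polynomial whose Newton polygon has a known left endpoint and residual polynomial. So the plan is to \emph{realize} both sides as $\hm(f)$ for suitable $f\in\kxfg$ and then invoke the uniqueness already built into $\smu$ via Corollary \ref{equivR}.

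First I would reduce to the case $u=u'=0_\g$ and compare degrees. Since $p_r$ is a group homomorphism into $\gg^*$ and $x_r$ is a homogeneous prime, one can take degrees in \eqref{su}: the degree of $x_r^s p_r^u\varphi(y_r)$ is $s\ga_r+u$ (as $\varphi(y_r)\in\Delta$ has degree $0$), so $s\ga_r+u=s'\ga_r+u'$, i.e. $(s-s')\ga_r=u'-u\in\g_{r-1}\subset\g_{\mu_{r-1}}$. By the defining property $e_r\Z=\{e\in\Z\mid e\ga_r\in\g_{\mu_{r-1}}\}$ this forces $s-s'=je_r$ for some $j\in\Z$, whence $u'=u-jh_r$; replacing $\varphi$ by $y^{\,?}\varphi$ or $\psi$ by $y^{\,?}\psi$ and using $y_r=x_r^{e_r}p_r^{-h_r}$ (Lemma \ref{j}) absorbs the discrepancy, so WLOG $s=s'$, $u=u'$ — \emph{provided} we show $j=0$, which will come out of the residual-polynomial comparison below once we know the two sides are genuine initial terms.

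The heart of the argument is to produce polynomials realizing these expressions. I would pick $f\in\kx$ whose $\phi_r$-expansion has the coefficients $a_{s+\nu e_r}$ chosen so that $p_r^{-\mu_{r-1}(a_{s+\nu e_r})}\hm(a_{s+\nu e_r})$ equals the $\nu$-th coefficient of $\varphi$ (this is possible since each nonzero coefficient of $\varphi$ lies in $\ka_r^*=\Delta^*$, and by Proposition \ref{maxsubfield}/Lemma \ref{imagedelta} such elements are images of units $\hm(a)$ with $\deg a<m_r$; the exponent $u$ dictates the common $\mu_{r-1}$-value via $\mu_{r-1}(a_{s})=u$ and the slope $-\ga_r$ forces the rest), arranging that $N_r(f)$ has $\ga_r$-component with left endpoint $(s,u)$ and that $\varphi(0)\ne0$ makes this endpoint a true vertex. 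Then Theorem \ref{Hmug} gives $\hm(f)=x_r^{s}p_r^{u}R_r(f)(y_r)$ with $R_r(f)=\varphi$. Doing the same for $\psi$ yields $g$ with $\hm(g)=x_r^{s'}p_r^{u'}\psi(y_r)=x_r^{s}p_r^{u}\psi(y_r)$ (using the normalization from the first paragraph). Now \eqref{su} says $\hm(f)=\hm(g)$, i.e. $f\smu g$ (if $s=s'$; and if a priori $j\ne 0$, Corollary \ref{equivR} via $s_r(f)=s_r(g)$ forces $j=0$ anyway since $\deg R_r(f),\deg R_r(g)$ are fixed). Corollary \ref{equivR} then immediately gives $s_r(f)=s_r(g)$, $u_r(f)=u_r(g)$ and $R_r(f)=R_r(g)$, i.e. $s=s'$, $u=u'$, $\varphi=\psi$.

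The main obstacle I anticipate is the \emph{construction of $f$ with a prescribed residual polynomial}: one must produce, for each target coefficient in $\ka_r^*$, an actual polynomial $a_\nu\in\kx$ with $\deg a_\nu<m_r$, with the correct $\mu_{r-1}$-value, and with $\hm(a_\nu)/p_r^{\mu_{r-1}(a_\nu)}$ equal to that coefficient — and then check that the $\phi_r$-expansion $\sum a_\nu\phi_r^{s+\nu e_r}$ genuinely has the Newton polygon whose $\ga_r$-component is the intended segment (no coefficient accidentally dips below the line, endpoints are vertices). Surjectivity of $\kx_{<m_r}\to\ka_r^*$ in the required sense follows from Lemma \ref{imagedelta} together with the identification $\ka_r^*=\Delta^*$ (Proposition \ref{maxsubfield}), and the Newton-polygon bookkeeping is routine given Remark \ref{mug} and the additivity lemmas; but this is where the real content lies, so I would write it carefully. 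An alternative, leaner route — which may be what the authors intend — is purely algebraic: apply $\hm^{-1}$-free reasoning inside $\gg$ directly, using that $\Delta=\ka_r[y_r]$ is a polynomial ring over the field $\ka_r$ (Theorem \ref{kstructure}) so $\varphi(y_r)=\psi(y_r)$ already forces $\varphi=\psi$ \emph{once} $s=s'$ and $u=u'$ are known, and deriving $s=s',u=u'$ from unique factorization in the graded domain $\gg$ (the prime $x_r$, the units $p_r^\alpha$, and the fact that $\varphi(0),\psi(0)\ne0$ means $x_r\nmid\varphi(y_r)$ in $\gg$). I would likely present this second argument as the clean proof and relegate the polynomial-construction viewpoint to a remark.
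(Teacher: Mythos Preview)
Your ``alternative, leaner route'' is precisely the paper's proof, and you are right to prefer it. In fact the paper's argument is even simpler than your sketch suggests: one does not need to invoke unique factorization in $\gg$ or the fact that $x_r\nmid\varphi(y_r)$. After comparing degrees to get $s\ga_r+u=s'\ga_r+u'$, assume $s\le s'$; then Lemma~\ref{j} gives an integer $j\ge0$ with $x_r^{s'}p_r^{u'}=x_r^s p_r^u y_r^j$, so \eqref{su} becomes (after cancelling the nonzero element $x_r^s p_r^u$ in the integral domain $\gg$) simply $\varphi(y_r)=y_r^j\psi(y_r)$. Since $y_r$ is transcendental over $\ka_r$ by Theorem~\ref{kstructure}, this reads $\varphi=y^j\psi$ in $\ka_r[y]$, and $\varphi(0)\ne0$ forces $j=0$.

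Your primary approach---realizing both sides as $\hm(f)$, $\hm(g)$ for explicitly constructed $f,g$ and then applying Corollary~\ref{equivR}---would also work, but it is considerably heavier and somewhat backwards relative to the paper's logic. The construction you describe (lifting each coefficient of $\varphi$ to a polynomial of degree $<m_r$ with prescribed $\mu_{r-1}$-value and residue in $\ka_r^*$) is essentially the content of Proposition~\ref{Rconstruct}, which in the paper is proved \emph{using} Lemma~\ref{Runicity}. Your direct construction avoids formal circularity, but the surjectivity step you flag as ``the real content'' is genuinely nontrivial to write out cleanly, whereas the algebraic argument above is three lines. So your instinct to lead with the second argument is the right one.
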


\begin{proof}
Since $\varphi(y_r)$ and $\psi(y_r)$ have degree zero in $\gg$, the equality (\ref{su}) implies
$$s\ga_r+u=\deg(x_r^s\,p_r^u)=\deg(x_r^{s'}\,p_r^{u'})=s'\ga_r+u'.$$

Suppose $s\le s'$. By Lemma \ref{j}, there exists an integer $j\ge0$ satisfying 
$$
s'=s+je_r,\quad u'=u-jh_r,\quad x_r^{s'}\,p_r^{u'}=x_r^s\,p_r^u\,y_r^j.
$$
Hence,  (\ref{su}) implies $\varphi(y_r)=y_r^j\,\psi(y_r)$.
By Theorem \ref{kstructure}, $\varphi=y^j\,\psi$. Since neither $\varphi$ nor $\psi$ are divisible by $y$, we have $j=0$. This implies  $s=s'$, $u=u'$ and $\varphi=\psi$.
\end{proof}

\begin{proposition}\label{Rconstruct}
Let $(s,u)\in\Z_{\ge0}\times\g_{r-1}$, and $\psi\in \ka_r[y]$ a polynomial with $\psi(0)\ne0$. 
Then, there exists a polynomial $f\in\kxfg$ such that
$$
s_r(f)=s,\qquad u_r(f)=u,\qquad R_r(f)=\psi.
$$
\end{proposition}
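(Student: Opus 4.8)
The plan is to produce $f$ by prescribing its initial term in the graded algebra. Since $\hm(f)=x_r^{s_r(f)}p_r^{u_r(f)}R_r(f)(y_r)$ for every $f\in\kxfg$ by Theorem~\ref{Hmug}, and since $R_r(f)(0)\in\ka_r^*$ by Lemma~\ref{basicR} while $\psi(0)\ne0$ by hypothesis, Lemma~\ref{Runicity} shows that producing the required $f$ is equivalent to producing $f\in\kxfg$ with $\hm(f)=x_r^{s}p_r^{u}\psi(y_r)$. Writing $\psi=c_0+c_1y+\cdots+c_dy^d$ with $c_0\ne0$ and $c_d\ne0$, I would first reduce to the following: for each $j$ with $c_j\ne0$, find a polynomial $g_j\in\kx$ with $\deg g_j<m_r$, with $\mu_{r-1}(g_j)=u-jh_r\in\g_{r-1}$, and with $\hm(g_j)=p_r^{\,u-jh_r}c_j$. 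Granting this, I would take $f=\sum_{j:\,c_j\ne0}g_j\,\phi_r^{\,s+je_r}$; since each $g_j$ has degree $<\deg\phi_r$, this is the $\phi_r$-expansion of $f$, its cloud of points $\{(s+je_r,\,u-jh_r)\}$ lies on the line of slope $-\ga_r$ through $(s,u)$ (as $h_r=e_r\ga_r$), so $N_r(f)=S_{\ga_r}(f)$ has left endpoint $(s,u)$, $\mu(f)=u+s\ga_r\in\g_r$ so $f\in\kxfg$, and Definition~\ref{defcj} gives $R_r(f)=\psi$. Thus everything reduces to the existence of these coefficient polynomials.

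I would prove that existence by induction on the length $r$ of the MacLane chain, keeping the same $\g_{-1}$ throughout and using the truncated chain for $\mu_{r-1}$. The base case $r=0$ is immediate: then $m_0=1$, $\ka_0=k$, $c_j\in k^*$, and one takes the constant $g_j=t_j\,\pi_0^{\,u-jh_0}\in K^*$ with $t_j\in K^*$ of value $0$ and residue $c_j$ (note $u-jh_0\in\g_{-1}\subset v(K^*)$); then $v(g_j)=u-jh_0$ and $H_{\mu_0}(g_j)=c_j\,p_0^{\,u-jh_0}$.

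For the inductive step I would fix $j$, set $\beta=u-jh_r\in\g_{r-1}$, and rewrite the target $p_r^{\,\beta}c_j$ in terms of level-$(r-1)$ data. Using that $\g_{r-1}/\g_{r-2}$ is cyclic of order $e_{r-1}$ generated by $\ga_{r-1}$ (see~(\ref{intersection})), there are unique $a\in\{0,\dots,e_{r-1}-1\}$, $b\in\g_{r-2}$ with $\beta=a\ga_{r-1}+b$; then $\kappa_j:=x_{r-1}^{-a}p_{r-1}^{-b}p_r^{\,\beta}c_j$ is a homogeneous unit of degree $0$, hence an element of $\Delta^*=\ka_r^*=\ka_{r-1}[z_{r-1}]^*$, which I would express as $\chi_j(z_{r-1})$ with $\chi_j\in\ka_{r-1}[y]$ of degree $<f_{r-1}$, and factor $\chi_j=y^{t_j}\tilde\chi_j$ with $\tilde\chi_j(0)\ne0$. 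The relation $z_{r-1}=x_{r-1}^{e_{r-1}}p_{r-1}^{-h_{r-1}}$ in $\gg$ (coming from $Y_{r-1}=\phi_{r-1}^{e_{r-1}}\pi_{r-1}^{-h_{r-1}}$) then gives $p_r^{\,\beta}c_j=x_{r-1}^{\,a+e_{r-1}t_j}p_{r-1}^{\,b-t_jh_{r-1}}\tilde\chi_j(z_{r-1})$, with $b-t_jh_{r-1}\in\g_{r-2}$ since $h_{r-1}\in\g_{r-2}$. Applying the inductive hypothesis (Proposition~\ref{Rconstruct} for $\mu_{r-1}$) to the data $(a+e_{r-1}t_j,\ b-t_jh_{r-1},\ \tilde\chi_j)$ produces $g_j$ with those $\mu_{r-1}$-invariants; Theorem~\ref{Hmug} for $\mu_{r-1}$ then expresses $H_{\mu_{r-1}}(g_j)$, and since $\deg g_j<m_r$ forces $\phi_r\nmid_{\mu_{r-1}}g_j$, its image under $\gg_{\mu_{r-1}}\to\gg$ — which sends $x_{r-1}\mapsto x_{r-1}$, $p_{r-1}\mapsto p_{r-1}$, $y_{r-1}\mapsto z_{r-1}$ — is $\hm(g_j)=p_r^{\,\beta}c_j$, while a degree count yields $\mu_{r-1}(g_j)=\beta$.

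The step I expect to be the main obstacle is keeping $\deg g_j<m_r$ simultaneously with hitting the initial term exactly; this is precisely why the two normalizations above are needed. The recursive construction gives $g_j$ a $\phi_{r-1}$-expansion supported on the exponents from $s_{r-1}(g_j)$ to $s_{r-1}(g_j)+e_{r-1}\deg\tilde\chi_j$ with coefficients of degree $<m_{r-1}$, so $\deg g_j<m_{r-1}(a+e_{r-1}\deg\chi_j+1)\le m_{r-1}e_{r-1}f_{r-1}=m_r$, using $a\le e_{r-1}-1$, $\deg\chi_j\le f_{r-1}-1$ and the identity $m_r=m_{r-1}e_{r-1}f_{r-1}$ (the defectlessness of inductive valuations). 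The remaining, more routine, ingredient is the computation identifying $p_r^{\,\beta}c_j$ with the displayed monomial in $x_{r-1},p_{r-1},z_{r-1}$, which unwinds the description of $\pi_r$ in terms of $\phi_{r-1}$ and $\pi_{r-1}$ via formula~(\ref{phis}) and Proposition~\ref{quotbeta}.
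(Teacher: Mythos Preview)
Your strategy is sound and yields an explicit, recursively computable $f$, but it is far more elaborate than the paper's proof, which is four lines. The paper simply observes that every nonzero homogeneous element of $\ggm$ is, by definition of the graded algebra, equal to $\hm(f)$ for some $f\in\kx$; so one picks any $f$ with $\hm(f)=x_r^{s}p_r^{u}\psi(y_r)$, notes $\mu(f)=u+s\ga_r\in\g_r$ so $f\in\kxfg$, and then Theorem~\ref{Hmug} together with Lemma~\ref{Runicity} (exactly the reduction in your opening paragraph) finishes. No coefficient polynomials $g_j$, no degree bounds, no induction, and no need for the identity $m_r=m_{r-1}e_{r-1}f_{r-1}$. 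What your approach buys is constructiveness: it produces an $f$ with a prescribed $\phi_r$-expansion, which is what one actually wants for algorithms (the paper obtains this separately, downstream of Theorem~\ref{recursivecj}).

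There is, however, a genuine gap in your induction as written. You invoke ``Proposition~\ref{Rconstruct} for $\mu_{r-1}$'' as the inductive hypothesis, but that statement gives no control on $\deg g_j$; you then appeal to ``the recursive construction'' to bound the $\phi_{r-1}$-support of $g_j$, which is a property of the \emph{proof}, not of the proposition. To make this rigorous you must strengthen the statement being proved by induction, for instance to: \emph{one may choose $f$ whose $\phi_r$-expansion is supported exactly on $\{s,\,s+e_r,\dots,s+e_r\deg\psi\}$}. With that hypothesis at level $r-1$, your degree estimate $a+e_{r-1}\deg\chi_j+1\le e_{r-1}f_{r-1}$ does give $\deg g_j<m_{r-1}e_{r-1}f_{r-1}=m_r$ (this last identity is Theorem~\ref{charKP}(2) plus Corollary~\ref{Rminimal}(2), rather than Corollary~\ref{defect1phi} per se). Incidentally, you do not need the degree bound to conclude $\phi_r\nmid_{\mu_{r-1}}g_j$: since $R_{r-1}(g_j)=\tilde\chi_j$ has degree $<f_{r-1}=\deg R_{r-1}(\phi_r)$, Corollary~\ref{equivR} already gives $\phi_r\nmid_{\mu_{r-1}}g_j$; the degree bound is needed only so that $\sum g_j\phi_r^{s+je_r}$ is the canonical $\phi_r$-expansion.
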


\begin{proof}
Take $f\in\kx$ such that $\hm(f)$ is the homogeneous element $x_r^s\,p_r^u\,\psi(y_r)\in\gg$. Since $\mu(f)=u+s\ga_r\in\g_r$, this polynomial has attainable $\mu$-value.
By Theorem \ref{Hmug}, 
$$
x_r^s\,p_r^u\,\psi(y_r)=\hm(f)=x_r^{s_r(f)}\,p_r^{u_r(f)}\,R_r(f)(y_r).
$$
The result follows from Lemma \ref{Runicity}.
\end{proof}

\subsection{Characterization of key polynomials for $\mu$}\label{secHomogP}

\begin{theorem}\label{charKP}
A monic $\phi\in\kx$ is a key polynomial for $\mu$ if and  only if one of the two following conditions is satisfied:
\begin{enumerate}
\item[(1)] $\deg(\phi)=\deg(\phi_r)$ \,and\; $\phi\smu\phi_r$.
\item[(2)] $s_r(\phi)=0$, $\deg(\phi)=e_rm_r\deg(R_r(\phi))$\, and\; $R_r(\phi)$ is irreducible in $ \ka_r[y]$.
\end{enumerate}

In case (1), $\rrm(\phi)=y_r\Delta$.
In case (2), 
$R_r(\phi)$ is monic, 
$\rrm(\phi)=R_r(\phi)(y_r)\Delta$,  and
$N_r(\phi)$ is one-sided of slope $-\ga_r$.
\end{theorem}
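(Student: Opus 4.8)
The approach is to read off everything from the factorization $\hm(\phi)=x_r^{s_r(\phi)}\,p_r^{u_r(\phi)}\,R_r(\phi)(y_r)$ of Theorem~\ref{Hmug}, which expresses $\hm(\phi)$ in $\gg$ as a power of the prime element $x_r=\hm(\phi_r)$, times a unit, times the degree-zero element $R_r(\phi)(y_r)$ of $\Delta=\ka_r[y_r]$; recall that $\Delta$ is a polynomial ring over $\ka_r$ (Theorem~\ref{kstructure}), hence a PID. First I would record a few facts. Since $\phi_r$ is a key polynomial for $\mu$ of minimal degree (Proposition~\ref{extension}), $x_r$ is prime in $\gg$, and combining Propositions~\ref{maxideal} and~\ref{maxsubfield} with $y_r=x_r^{e_r}p_r^{-h_r}\in x_r\gg$ one gets $\rrm(\phi_r)=x_r\gg\cap\Delta=y_r\Delta$; in particular no nonzero element of $\Delta$ with non-vanishing constant term belongs to $x_r\gg$. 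Also, for any monic $\mu$-minimal $f$ we have $f\in\kxfg$ (Lemma~\ref{agisatt}), $R_r(f)$ is monic (Lemma~\ref{monicR}), $\deg f=s'_r(f)m_r$ (by \cite[Prop.~3.7]{KeyPol}), and $R_r(f)(0)\in\ka_r^*$ in general (Lemma~\ref{basicR}).

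For the implication ``(1) or (2) $\Rightarrow$ key polynomial'', monicity is given and I would verify $\mu$-minimality and $\mu$-irreducibility. In case (1), $\hm(\phi)=\hm(\phi_r)=x_r$ is prime, so $\phi$ is $\mu$-irreducible; and $\phi\smu\phi_r$ together with $\deg\phi=m_r$ transfers $\mu$-minimality from the key polynomial $\phi_r$; moreover $\rrm(\phi)=\rrm(\phi_r)=y_r\Delta$. In case (2), Corollary~\ref{equivR} turns $\phi\mmu g$ into $R_r(\phi)\mid R_r(g)$ (since $s_r(\phi)=0$), which cannot happen for a nonzero $g\in\kxfg$ with $\deg g<\deg\phi$ because $\deg R_r(g)\le\lfloor\deg g/m_r\rfloor/e_r<\deg\phi/(e_rm_r)=\deg R_r(\phi)$; hence $\phi$ is $\mu$-minimal. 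For $\mu$-irreducibility, Corollaries~\ref{prodR} and~\ref{equivR} turn $\phi\mmu fg$ into $R_r(\phi)\mid R_r(f)R_r(g)$ in the PID $\ka_r[y]$, so irreducibility of $R_r(\phi)$ gives $\phi\mmu f$ or $\phi\mmu g$ (and $\hm(\phi)$ is not a unit since $R_r(\phi)$ is non-constant). Finally $\rrm(\phi)=\hm(\phi)\gg\cap\Delta=R_r(\phi)(y_r)\Delta$ by comparison of degree-zero parts, $R_r(\phi)$ is monic, and $N_r(\phi)$ is one-sided of slope $-\ga_r$: indeed $s_r(\phi)=0$, $s'_r(\phi)=e_r\deg R_r(\phi)>0$, and $\ord_{\phi_r}(\phi)=0$ (otherwise $\phi_r\mid\phi$ in $\kx$, forcing $\phi=\phi_r$ and $R_r(\phi)=1$, not irreducible).

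For the converse, let $\phi$ be a key polynomial for $\mu$. Then $\phi$ is monic and $\mu$-minimal, so $\phi\in\kxfg$, $R_r(\phi)$ is monic, $\deg\phi=s'_r(\phi)m_r$, and $\hm(\phi)\gg$ is a nonzero prime ideal. If $s_r(\phi)=0$, then $\hm(\phi)$ is a unit times $R_r(\phi)(y_r)$, so $\rrm(\phi)=\hm(\phi)\gg\cap\Delta=R_r(\phi)(y_r)\Delta$ is a nonzero proper principal prime ideal of $\ka_r[y_r]$, whence $R_r(\phi)$ is irreducible and $\deg\phi=s'_r(\phi)m_r=e_rm_r\deg R_r(\phi)$; this is case (2), and the remaining assertions follow exactly as in the previous paragraph. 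If $s_r(\phi)\ge1$, then $x_r\mid\hm(\phi)$ and primality of $\hm(\phi)\gg$ (recall $x_r$ is not a unit) forces $\hm(\phi)$ to be associate to $x_r$; hence $\phi\mmu\phi_r$ and $\phi_r\mmu\phi$, so $\mu$-minimality of $\phi$ and of $\phi_r$ gives $\deg\phi=m_r$, and $\hm(\phi)=c\,x_r$ for some $c\in\ka_r^*$. A short computation with $\hm(\phi-\phi_r)$ using the $\mu$-minimality of $\phi_r$ excludes $c\neq1$, so $\phi\smu\phi_r$, $\rrm(\phi)=y_r\Delta$, and we are in case (1).

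The main obstacle will be the analysis of the exponent of the prime $x_r$: one has to show that primality of the principal ideal $\hm(\phi)\gg$ simultaneously pins down $s_r(\phi)\in\{0,1\}$ and dictates the shape of $R_r(\phi)$ — a unit (hence $1$) when $s_r(\phi)=1$, and an irreducible polynomial when $s_r(\phi)=0$. This uses crucially that $x_r$ is prime in $\gg$ and that $x_r\nmid R_r(\phi)(y_r)$, which in turn rests on $\rrm(\phi_r)=y_r\Delta$ and $R_r(\phi)(0)\neq0$. A minor but unavoidable subtlety is the normalization step $c=1$ in case (1), which must be argued with the $\mu$-minimality of $\phi_r$, not of $\phi$.
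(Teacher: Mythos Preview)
Your argument is correct and self-contained, but it differs substantially in \emph{route} from the paper's own proof. The paper does not prove the equivalence here at all: it cites \cite[Prop.~6.3]{KeyPol} for the characterization of key polynomials in terms of the normalized operator $\widehat{R}_r$, then transfers the statement to $R_r$ via the relation $c_d^{-1}R_r(\phi)=\widehat{R}_r(\phi)$. The only thing the paper proves in situ is the one-sidedness of $N_r(\phi)$ in case~(2), by matching endpoint abscissas of $N_r(\phi)$ and $S_{\ga_r}(\phi)$, and the monicity of $R_r(\phi)$ via Lemma~\ref{monicR}. Your proof, by contrast, extracts the full equivalence directly from Theorem~\ref{Hmug}, the structure $\Delta=\ka_r[y_r]$ (Theorem~\ref{kstructure}), and Corollaries~\ref{prodR}--\ref{equivR}; this demonstrates that the machinery built in the present paper suffices to recover the characterization without importing \cite{KeyPol}.

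Two small points where your writeup could be tightened. First, the $\mu$-minimality and $\mu$-irreducibility arguments in case~(2) invoke Corollary~\ref{equivR} on arbitrary $g,f\in\kx$, but $R_r$ is only defined on $\kxfg$; you should note that by Lemma~\ref{agisatt}(1) one may multiply by a suitable constant in $K^*$ (which does not affect $\mu$-divisibility) and reduce to $\kxfg$. Second, in the step ``$\hm(\phi)=c\,x_r$ with $c\in\ka_r^*$'' you are tacitly using $\mu(\phi)=\mu(\phi_r)$; this follows once $s_r(\phi)=s'_r(\phi)=1$ forces $S_{\ga_r}(\phi)=\{(1,0)\}$ and hence $u_r(\phi)=0$, so Theorem~\ref{Hmug} gives $\hm(\phi)=x_r$ directly and the separate ``$c\ne 1$'' elimination becomes unnecessary.
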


\begin{proof}
In case (2), $\phi$ is $\mu$-minimal by \cite[Prop. 3.7]{KeyPol}. Hence, $R_r(\phi)$ is monic by Lemma \ref{monicR}. On the other hand, we prove below that $N_r(\phi)$ is one-sided of slope $-\ga_r$.
The rest of statements were proved in \cite[Prop. 6.3]{KeyPol} for the normalized residual polynomial $\Rh_r(f)$. By equation (\ref{RRh}), they hold for $R_r(f)$ too.\e

Suppose that $\phi$ satisfies (2).
Since $S_{\ga_r}(\phi)\subset N_r(\phi)$, the equality $N_r(\phi)=S_{\ga_r}(\phi)$ will follow from the fact that the endpoints of both polygons have the same abscissas. 

Both left endpoints have abscissa $0$. Since $\ell(N_r(\phi))=\deg(\phi)/m_r=s'_r(\phi)$, both right endpoints have the same abscissa too.

Since $s'_r(\phi)>0$, $N_r(\phi)$ is one-sided of slope $-\ga_r$, according to Definition \ref{onesided}.
\end{proof}


Consider $\phi_1,\dots,\phi_r$ as key polynomials of $\mu_0,\dots,\mu_{r-1}$, respectively. By the definition of a MacLane chain, all these key polynomials fall in the second case of Theorem \ref{charKP}. 

\begin{corollary}\label{Rminimal}
Consider an index $0\le i< r$. 
\begin{enumerate}
\item[(1)] The Newton polygon $N_i(\phi_{i+1})$ is one-sided of slope $-\ga_i$.
\item[(2)] The residual polynomial  $R_{i}(\phi_{i+1})$ is the minimal polynomial of $z_{i}$ over $ \ka_{i}$. 

In particular, \ $\deg(R_{i}(\phi_{i+1}))=[ \ka_{i+1}\colon \ka_{i}]=f_{i}$.
\end{enumerate}
\end{corollary}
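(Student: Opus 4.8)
The plan is to obtain both assertions by applying Theorem \ref{charKP} to the truncation of the given MacLane chain at the $i$-th node, i.e.\ to the intermediate valuation $\mu_i$ equipped with its MacLane chain $\minf\to\cdots\to\mu_{i-1}\stackrel{\phi_i,\ga_i}{\to}\mu_i$. First I would record the two facts that make this legitimate: $\phi_{i+1}$ is a key polynomial for $\mu_i$ (it is, by construction, the key polynomial used in the augmentation $\mu_{i+1}=[\mu_i;\phi_{i+1},\ga_{i+1}]$, whence also $\mu_i$-minimal); and, being monic and $\mu_i$-minimal, it has attainable $\mu_i$-value by Lemma \ref{agisatt}, so that $R_i(\phi_{i+1})$ is defined. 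I would also recall the defining property of a MacLane chain, $\phi_{i+1}\nmid_{\mu_i}\phi_i$.

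Next I would run Theorem \ref{charKP} with $\mu_i$ in the role of $\mu$. Case (1) of that statement would force $\phi_{i+1}\sim_{\mu_i}\phi_i$, hence $\phi_{i+1}\mid_{\mu_i}\phi_i$ (via the equivalence $\phi_{i+1}\mid_{\mu_i}\phi_i\Leftrightarrow\phi_{i+1}\sim_{\mu_i}\phi_i$ recalled just before the definition of a MacLane chain), contradicting the chain hypothesis. Therefore $\phi_{i+1}$ falls into case (2). This already gives part (1) of the corollary, since part of the content of case (2) in Theorem \ref{charKP} is precisely that $N_i(\phi_{i+1})$ is one-sided of slope $-\ga_i$; it also yields that $R_i(\phi_{i+1})$ is monic (Lemma \ref{monicR}) and irreducible in $\ka_i[y]$, with $\rr_{\mu_i}(\phi_{i+1})=R_i(\phi_{i+1})(y_i)\,\Delta_i$.

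For part (2) I would feed this into Lemma \ref{imagedelta}, which identifies $\ka_{i+1}=\im(\Delta_i\to\Delta_{i+1})$ with $\Delta_i/\rr_{\mu_i}(\phi_{i+1})$, and into the structure isomorphism $\ka_i[y]\stackrel{\sim}{\to}\Delta_i$, $y\mapsto y_i$, of Theorem \ref{kstructure}. Transporting the ideal $\rr_{\mu_i}(\phi_{i+1})$ through the latter gives the principal ideal generated by the monic polynomial $R_i(\phi_{i+1})$, so that
$$\ka_{i+1}\;\simeq\;\ka_i[y]\big/\bigl(R_i(\phi_{i+1})\bigr),$$
an isomorphism under which the class of $y$ corresponds to the image of $y_i$ in $\Delta_{i+1}$, namely $z_i$ (Definition \ref{xyz}). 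Thus $z_i$ is a root of the monic irreducible polynomial $R_i(\phi_{i+1})\in\ka_i[y]$, hence $R_i(\phi_{i+1})$ is the minimal polynomial of $z_i$ over $\ka_i$; comparing degrees and using $\ka_{i+1}=\ka_i[z_i]$ (Lemma \ref{degpsi}) together with the definition of $f_i$ in (\ref{fphi}), $\deg R_i(\phi_{i+1})=[\ka_{i+1}:\ka_i]=f_i$.

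I do not expect a genuine obstacle: the argument is a bookkeeping assembly of results already established for the valuations $\mu_i$. The single step that requires real care — and the only place where the MacLane-chain hypothesis is actually used — is excluding case (1) of Theorem \ref{charKP}, i.e.\ deducing from $\phi_{i+1}\nmid_{\mu_i}\phi_i$ that $\phi_{i+1}\not\sim_{\mu_i}\phi_i$; everything else is a direct substitution into the already-proven characterization and structure theorems applied to $\mu_i$.
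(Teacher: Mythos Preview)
Your proof is correct and follows essentially the same approach as the paper: both invoke Theorem \ref{charKP} (case (2), ruled in by the MacLane-chain condition $\phi_{i+1}\nmid_{\mu_i}\phi_i$) to obtain part (1) and the monic irreducibility of $R_i(\phi_{i+1})$, and then show $R_i(\phi_{i+1})(z_i)=0$ by tracking what happens under the canonical map $\gg_{\mu_i}\to\gg_{\mu_{i+1}}$. The only cosmetic difference is that you package the vanishing step via Lemma \ref{imagedelta} and the residual-ideal identity $\rr_{\mu_i}(\phi_{i+1})=R_i(\phi_{i+1})(y_i)\Delta_i$, whereas the paper uses Theorem \ref{Hmug} and Proposition \ref{extension}(b) directly; these are equivalent formulations of the same fact.
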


\begin{proof}
By Theorem \ref{charKP}, $N_i(\phi_{i+1})$ is one-sided of slope $-\ga_i$, and $R_{i}(\phi_{i+1})$ is monic irreducible. 
By Theorem \ref{Hmug}, $H_{\mu_{i}}(\phi_{i+1})$ is associate to $R_{i}(\phi_{i+1})(y_{i})$ in the graded algebra $\gg_{\mu_i}$. 
By Proposition \ref{extension}, the homomorphism $\gg_{\mu_i}\to \gg_{\mu_{i+1}}$ vanishes on both elements. 
Thus,  $R_{i}(\phi_{i+1})(z_{i})=0$, because it is the image of  $R_{i}(\phi_{i+1})(y_{i})$ under this homomorphism.
\end{proof}

Finally, we may deduce from these results a well-known property of key polynomials for inductive valuations: they are \emph{defectless} (see \cite{Vaq2}). 

\begin{corollary}\label{defect1phi}
A key polynomial $\phi\in\kpm$  satisfies $\deg(\phi)=e(\phi)f(\phi)$.

In particular, the valuation $v_\phi$ is the unique extension of $v$ to the field $K_\phi$.   
\end{corollary}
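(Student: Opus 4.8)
The plan is to invoke the dichotomy of Theorem~\ref{charKP} for a key polynomial $\phi$ of $\mu$, and in each of the two cases to compute the ramification index $e(\phi)=(\g_{\mu,\deg\phi}:\g)$ and the residual degree $f(\phi)=[k_\phi:k]$ of $v_\phi$ directly from the data of the MacLane chain. As a preliminary I would record the identity $m_i:=\deg(\phi_i)=e(\phi_i)f(\phi_i)$ for all $0\le i\le r$: one has $\deg(\phi_0)=1$ by Corollary~\ref{kpminf}, while for $0\le i<r$, combining Theorem~\ref{charKP} with Corollary~\ref{Rminimal} gives $\deg(\phi_{i+1})=e_i\,m_i\,\deg(R_i(\phi_{i+1}))=e_i f_i\,m_i$; inductively $m_i=(e_0\cdots e_{i-1})(f_0\cdots f_{i-1})$, which by (\ref{ephi}) and (\ref{fphi}) is exactly $e(\phi_i)f(\phi_i)$.

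Now fix $\phi\in\kpm$. Suppose first that $\phi$ is of type~(1) in Theorem~\ref{charKP}. Then $\deg\phi=\deg\phi_r=m_r$, so $\g_{\mu,\deg\phi}=\g_{\mu,m_r}$, and $\phi\smu\phi_r$, so $\rrm(\phi)=\rrm(\phi_r)$. By Proposition~\ref{vphi} and Lemma~\ref{groupchain}, the value group of $v_\phi$ is $\g_{\mu,m_r}=\g_{v_{\phi_r}}=\g_{\mu_{r-1}}$, hence $e(\phi)=(\g_{\mu_{r-1}}:\g)=e_0\cdots e_{r-1}$; and by Proposition~\ref{maxideal}, $k_\phi\simeq\Delta/\rrm(\phi)=\Delta/\rrm(\phi_r)\simeq k_{\phi_r}$, hence $f(\phi)=[k_{\phi_r}:k]=f_0\cdots f_{r-1}$ by (\ref{fphi}). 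Therefore $e(\phi)f(\phi)=m_r=\deg\phi$ by the preliminary step.

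Suppose instead that $\phi$ is of type~(2): $\deg\phi=e_r\,m_r\,\deg(R_r(\phi))$, with $R_r(\phi)$ irreducible in $\ka_r[y]$ and $\rrm(\phi)=R_r(\phi)(y_r)\Delta$. Using the $\ka_r$-algebra isomorphism $\ka_r[y]\simeq\Delta$ sending $y$ to $y_r$ (Theorem~\ref{Delta}), Proposition~\ref{maxideal} gives $k_\phi\simeq\ka_r[y]/(R_r(\phi))$, a field of degree $\deg(R_r(\phi))$ over $\ka_r$, so $f(\phi)=\deg(R_r(\phi))\,(f_0\cdots f_{r-1})$. For the value group I would check that $\g_{\mu,\deg\phi}=\g_\mu$: since $\deg\phi\ge e_r m_r\ge m_r$, if $\deg\phi>m_r$ then $\phi_r$ has degree $<\deg\phi$, so $\ga_r\in\g_{\mu,\deg\phi}\supseteq\g_{\mu,m_r}=\g_{\mu_{r-1}}$, forcing $\g_{\mu,\deg\phi}=\gen{\g_{\mu_{r-1}},\ga_r}=\g_\mu$; while if $\deg\phi=m_r$ then $e_r=1$, so $\ga_r\in\g_{\mu_{r-1}}$ and $\g_\mu=\g_{\mu_{r-1}}=\g_{\mu,m_r}$. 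Hence $e(\phi)=(\g_\mu:\g)=e_0\cdots e_r$ by Lemma~\ref{groupchain}, and
\[
e(\phi)f(\phi)=(e_0\cdots e_r)(f_0\cdots f_{r-1})\,\deg(R_r(\phi))=m_r\,e_r\,\deg(R_r(\phi))=\deg\phi,
\]
again by the preliminary identity.

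Finally, for the last assertion: $[K_\phi:K]=\deg\phi$ since $\phi$ is irreducible, and we have produced an extension $v_\phi$ of $v$ to $K_\phi$ with $e(v_\phi)f(v_\phi)=[K_\phi:K]$; the fundamental inequality $\sum_{w\mid v}e(w)f(w)\le[K_\phi:K]$ (summed over all extensions $w$ of $v$ to $K_\phi$) then leaves no room for a second extension, since each term is at least $1$. I do not expect a conceptual obstacle: the content is already packaged into Theorem~\ref{charKP} and into the multiplicativity of ramification indices and residue degrees along the chain. The delicate points are purely the combinatorial bookkeeping of the groups and residue fields in each case -- in particular the degenerate subcase $\deg\phi=m_r$ occurring within type~(2) -- together with the observation that $y_r$ is an admissible generator of $\Delta$ over $\ka_r$, which is what makes Theorems~\ref{kstructure} and~\ref{Delta} applicable here.
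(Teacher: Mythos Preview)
Your argument is correct, and the overall structure matches the paper's: invoke the dichotomy of Theorem~\ref{charKP}, and in each case identify $e(\phi)$ and $f(\phi)$ with the products $e_0\cdots$ and $f_0\cdots$ coming from the chain. The treatment of case~(1) is essentially identical to the paper's (you use Proposition~\ref{maxideal} and the equality $\rrm(\phi)=\rrm(\phi_r)$, while the paper uses Proposition~\ref{maxsubfield}; both give $k_\phi\simeq k_{\phi_r}$).

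The genuine difference is in case~(2). You compute $e(\phi)$ and $f(\phi)$ directly: $f(\phi)$ via $k_\phi\simeq\Delta/\rrm(\phi)\simeq\ka_r[y]/(R_r(\phi))$, and $e(\phi)$ by arguing that $\g_{\mu,\deg\phi}=\g_\mu$ (with a separate check of the degenerate subcase $\deg\phi=m_r$, forcing $e_r=1$). The paper instead avoids this computation entirely: for any $\ga>\ga_r$ it extends the MacLane chain by $\mu'=[\mu;\phi,\ga]$, so that $\phi$ becomes the new $\phi_{r+1}$; then the identity $\deg(\phi_{r+1})=e(\phi_{r+1})f(\phi_{r+1})$ is just the preliminary step applied to the longer chain. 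This trick is shorter and sidesteps the subcase analysis, but your direct computation has the merit of making explicit exactly what $e(\phi)$ and $f(\phi)$ are in terms of $R_r(\phi)$, without introducing an auxiliary valuation. Your final appeal to the fundamental inequality for the uniqueness of $v_\phi$ is the standard justification the paper leaves implicit.
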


\begin{proof}
Consider a MacLane chain of $\mu$ as in (\ref{depth}). By Theorem \ref{charKP} we have two possibilities for a key polynomial $\phi$ for $\mu$. Let us discuss separatedly each case.

If $\deg(\phi)=\deg(\phi_r)$ and $\phi\smu\phi_r$, then Propositions \ref{vphi} and  \ref{maxsubfield} show that 
$$\g_{v_\phi}=\g_{\mu,\deg(\phi)}=\g_{\mu,\deg(\phi_r)}=\g_{v_{\phi_r}},\qquad  k_\phi\simeq k_{\phi_r}.$$ Thus, $e(\phi)=e(\phi_r)=e_0\cdots e_{r-1}$ and $f(\phi)=f(\phi_r)=f_0\cdots f_{r-1}$, by (\ref{ephi}) and (\ref{fphi}). 

On the other hand, Theorem \ref{charKP} and Corollary \ref{Rminimal}  show that
$$
\deg(\phi)=\deg(\phi_r)=e_{r-1}f_{r-1}m_{r-1}=e_{r-1}f_{r-1}\cdots\, e_0f_0=e(\phi_r)f(\phi_r)=e(\phi)f(\phi).
$$

Finally, for any $\ga\in\gq$, $\ga>\ga_r$, consider the augmented valuation $\mu'=[\mu;\phi,\ga]$. If $\phi\not\smu\phi_r$, we may extend our MacLane chain of $\mu$ to a MacLane chain of $\mu'$:
$$
\minf\ \stackrel{\phi_0,\ga_0}\lra\  \mu_0\ \stackrel{\phi_1,\ga_1}\lra\ \cdots
\ \stackrel{\phi_{r-1},\ga_{r-1}}\lra\ \mu_{r-1} 
\ \stackrel{\phi_{r},\ga_{r}}\lra\ \mu_{r}\ \stackrel{\phi,\ga}\lra\  \mu'.
$$
Hence,  $\deg(\phi)=e(\phi)f(\phi)$ by the same argument we used for $\phi_r$.
\end{proof}

\subsection{Recursive computation of the operator $R_r$}\label{subsecRecursive}

From an algorithmic perspective, Lemma \ref{degpsi} and Corollary \ref{Rminimal} show how to build the tower (\ref{chaink}) of residue fields 
with the irreducible polynomials $R_i(\phi_{i+1})\in \ka_i[x]$.
The fields may be constructed as $$\ka_{i+1}=\ka_i[x]/(R_i(\phi_{i+1})),$$ and we may identify the generator $z_i\in \ka_{i+1}$ with the class of $x$ in this quotient.

On the other hand, the algorithmic computation of the operator $R_r$ is based on explicit formulas for the coefficients of the residual polynomials $R_r(f)\in \ka_r[x]$, in terms of the previous operators $R_0,\dots,R_{r-1}$.

\begin{definition}\label{defeps}
For some $0\le i<r$, let $a\in\kx_{\mu_i\op{-att}}$. We define
$$
\ep_i(a)=(z_i)^{L'_i\,s_i(a)-L_i\left(u_i(a)\right)}\in  \ka_{i+1}^*,
$$
where $(s_i(a),u_i(a))$ is the left endpoint of $S_{\ga_i}(a)$, the $\ga_i$-component of $N_i(a)$.
\end{definition}

\begin{theorem}\label{recursivecj}
For $f=\sum_{0\le s}a_s\phi_r^s\in\kxfg$, let 
$R_r(f)=c_0+\cdots+c_dy^d\in \ka_r[y]$. For each $j$ such that $c_j\ne0$, denote $s_j=s_r(f)+je_r$. Then,
$$
\as{1.4}
c_j=\begin{cases}\overline{a_{s_j}/\pi_0^{v(a_{s_j})}},&\quad \mbox{if } r=0,\\
\ep_{r-1}(a_{s_j})\,R_{r-1}(a_{s_j})(z_{r-1}),&\quad \mbox{if } r>0.
\end{cases}
$$ 
\end{theorem}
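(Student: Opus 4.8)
The plan is to unwind the definition of the coefficient $c_j$ (Definition~\ref{defcj}) and reorganize it by transporting, along the canonical graded-algebra homomorphism $\gg_{\mu_{r-1}}\to\gg_{\mu_r}=\gg$, both the rational-function identities of Section~\ref{secIndVals} and the ``one level down'' instance of Theorem~\ref{Hmug}. Fix $j$ with $c_j\ne0$ and put $a=a_{s_j}$; then $a\ne0$, the point $Q_{s_j}$ lies on $N_r(f)$ (hence $Q_{s_j}\in S_{\ga_r}(f)$), and, as $f$ has attainable $\mu$-value, Lemma~\ref{agisatt}(3) gives $a\in\kx_{\mu_{r-1}\op{-att}}$; moreover $c_j=p_r^{-\alpha}\hm(a)$ with $\alpha:=\mu_{r-1}(a)\in\g_{r-1}$. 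If $r=0$ this finishes the job: there $a\in K$ and $\pi_0^{v(a)}\in K^*$ has $v$-value $v(a)$, so $c_j=H_{\mu_0}\bigl(a\,\pi_0^{-v(a)}\bigr)=\overline{a/\pi_0^{v(a)}}$ via the embedding $k\hookrightarrow\Delta_0$. Assume henceforth $r>0$. By Remark~\ref{mug} applied to $\mu_{r-1}=[\mu_{r-2};\phi_{r-1},\ga_{r-1}]$ and the key polynomial $\phi_{r-1}$, one has $\alpha=u_{r-1}(a)+s_{r-1}(a)\,\ga_{r-1}$, where $(s_{r-1}(a),u_{r-1}(a))$ denotes the left endpoint of $S_{\ga_{r-1}}(N_{r-1}(a))$; applying Lemma~\ref{agisatt}(3) to $a$ one level down shows $u_{r-1}(a)\in\g_{r-2}$, so that $\pi_{r-1}^{\,u_{r-1}(a)}$ is defined and $L_{r-1}(u_{r-1}(a))\in\Z$.

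I would then establish two identities in $\gg$:
$$
p_r^{-\alpha}=x_{r-1}^{-s_{r-1}(a)}\,p_{r-1}^{-u_{r-1}(a)}\,\ep_{r-1}(a),
\qquad
\hm(a)=x_{r-1}^{\,s_{r-1}(a)}\,p_{r-1}^{\,u_{r-1}(a)}\,R_{r-1}(a)(z_{r-1}),
$$
after which multiplying them and cancelling the powers of $x_{r-1}$ and $p_{r-1}$ gives $c_j=\ep_{r-1}(a)\,R_{r-1}(a)(z_{r-1})$, as claimed. For the first identity, combining Proposition~\ref{gammaL0} with index $r-1$ ($\phi_{r-1}/\pi_r^{\ga_{r-1}}=Y_{r-1}^{L'_{r-1}}$) and Proposition~\ref{quotbeta} with index $r-1$ ($\pi_r^{\beta}/\pi_{r-1}^{\beta}=Y_{r-1}^{L_{r-1}(\beta)}$ for $\beta\in\g_{r-2}$), and decomposing $\alpha=u_{r-1}(a)+s_{r-1}(a)\ga_{r-1}$, yields in $K(x)^*$
$$
\pi_r^{\alpha}=\phi_{r-1}^{\,s_{r-1}(a)}\,\pi_{r-1}^{\,u_{r-1}(a)}\,Y_{r-1}^{\,L_{r-1}(u_{r-1}(a))-L'_{r-1}s_{r-1}(a)}.
$$
Since the chain is a MacLane chain we have $\phi_r\nmid_{\mu_{r-1}}\phi_{r-1}$, and $\phi_r\nmid_{\mu_{r-1}}a$ for degree reasons; so by Lemma~\ref{stable} and the stability relations (\ref{stability}) each of $\phi_{r-1}$, $\pi_{r-1,1},\dots,\pi_{r-1,k}$, $Y_{r-1}$ and $a$ has equal $\mu_{r-1}$- and $\mu_r$-value. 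Applying $H_{\mu_{r-1}}$ to the displayed identity and pushing the resulting relation of initial terms along $\gg_{\mu_{r-1}}\to\gg$ — which carries the elements $x_{r-1}=H_{\mu_{r-1}}(\phi_{r-1})$, $p_{r-1}^{\,\bullet}=H_{\mu_{r-1}}(\pi_{r-1}^{\,\bullet})$ and $y_{r-1}=H_{\mu_{r-1}}(Y_{r-1})$ to the units $x_{r-1}$, $p_{r-1}^{\,\bullet}$ and $z_{r-1}$ of $\gg$ in the notation of Definition~\ref{xyz} — produces the first identity, the remaining $z_{r-1}$-factor being $\ep_{r-1}(a)$ by Definition~\ref{defeps}. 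The second identity is Theorem~\ref{Hmug} applied to $\mu_{r-1}$ (valid since $a\in\kx_{\mu_{r-1}\op{-att}}$), transported along the same homomorphism: here $H_{\mu_{r-1}}(a)\mapsto\hm(a)$ because $\mu_{r-1}(a)=\mu_r(a)$, the coefficients of $R_{r-1}(a)\in\ka_{r-1}[y]$ embed into $\ka_r$ along the tower (\ref{chaink}), and $y_{r-1}\mapsto z_{r-1}$.

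The calculations are short; the point that really needs care — and what I expect to be the main obstacle — is that the two uses of the homomorphism $\gg_{\mu_{r-1}}\to\gg$ must produce literally the same units, so that the $x_{r-1}$, $p_{r-1}^{\,\bullet}$ and $z_{r-1}$ appearing in the formula for $p_r^{-\alpha}$ are identical with those obtained by transporting Theorem~\ref{Hmug} at level $r-1$, making the cancellation legitimate; this is precisely what Definition~\ref{xyz} together with the stability relations (\ref{stability}) guarantees. The remaining bookkeeping — that $\alpha\in\g_{r-1}$, $u_{r-1}(a)\in\g_{r-2}$ and $a\in\kx_{\mu_{r-1}\op{-att}}$, so that $p_r^{-\alpha}$, $\ep_{r-1}(a)$ and $R_{r-1}(a)$ are all well defined — is routine via Lemma~\ref{agisatt} and the relations (\ref{intersection}).
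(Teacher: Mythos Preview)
Your proof is correct and follows essentially the same approach as the paper's: reduce to the identity $p_r^{-\mu_{r-1}(a)}\hm(a)=\ep_{r-1}(a)\,R_{r-1}(a)(z_{r-1})$, express $\hm(a)$ via Theorem~\ref{Hmug} at level $r-1$ transported along $\gg_{\mu_{r-1}}\to\gg$, and then verify that $p_r^{-\alpha}x_{r-1}^{s}p_{r-1}^{u}=\ep_{r-1}(a)$. The only cosmetic difference is that you derive this last identity directly from Propositions~\ref{gammaL0} and~\ref{quotbeta}, whereas the paper packages their combination as Proposition~\ref{recurrence} and cites that; your attainability bookkeeping (via Lemma~\ref{agisatt}) is a bit more explicit than the paper's but amounts to the same verification.
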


\begin{proof}
If $r=0$, the statement is based on the equality 
$$p_0^{-v(a)}\hmz(a)=\overline{a/\pi_0^{v(a)}},\qquad \forall\,a\in K^*,
$$
which is a consequence of the identification $k=\ka_0$ established in section \ref{subsecML}.

Suppose $r>0$. It suffices to prove the equality
\begin{equation}\label{aimcj} 
p_r^{-\mu_{r-1}(a)}\hm(a)=\ep_{r-1}(a)\,R_{r-1}(a)(z_{r-1}),\qquad \forall\,a\in \kx\mbox{ with } \deg(a)<m_r.
\end{equation}

Take a non-zero $a\in \kx$ with $\deg(a)<m_r$. By Theorem \ref{Hmug},
\begin{equation}\label{ha}
H_{\mu_{r-1}}(a)=x_{r-1}^sp_{r-1}^uR_{r-1}(a)(y_{r-1}),\quad s=s_{r-1}(a_{s_j}),\ u=u_{r-1}(a_{s_j}).
\end{equation}

Since $\deg(a)<m_r=\deg(\phi_r)$, we have $\mu_{r-1}(a)=\mu(a)$, so that $\hm(a)$ is the image of $H_{\mu_{r-1}}(a)$ under the canonical homomorphism  $\gg_{\mu_{r-1}}\to\gg$.

By applying this homomorphism to the identity (\ref{ha}), we get
$$
\hm(a)=x_{r-1}^sp_{r-1}^uR_{r-1}(a)(z_{r-1}).
$$
Hence, the claimed identity (\ref{aimcj}) is equivalent to:
$$
p_r^{-\mu_{r-1}(a)}x_{r-1}^sp_{r-1}^u=\ep_{r-1}(a)=(z_{r-1})^{L'_{r-1}\,s-L_{r-1}\left(u\right)}.
$$
Since $\mu_{r-1}(a)=u+s\ga_{r-1}$, this identity follows from Proposition \ref{recurrence}, by applying $H_{\mu_{r-1}}$ to a similar identity between the corresponding rational functions.  
\end{proof}

This yields an algorithm to compute the operator $R_r$. 

Also, it is easy to deduce from Theorem \ref{recursivecj} an algorithm to compute polynomials in $\kx$ with a prescribed residual polynomial, in the spirit of Proposition \ref{Rconstruct}.

The latter algorithm may be used to construct key polynomials $\phi$ such that $R_r(\phi)$ is a prescribed monic irreducible polynomial in $\ka_r[y]$. 

\subsection{Dependence of $R_r$ on the choice of $\g_{-1}$ and its basis}\label{secDepgf}
Let  $\g'_{-1}\subset\g$ be  another finitely-generated subgroup satisfying the conditions of Definition \ref{defgf}, and let $\iota'_{0,1},\dots \iota'_{0,k'}$ be a $\Z$-basis of $\g'_{-1}$.
With respect to these choices, let  
$$
y'_i,\,(p'_i)^\alpha\in\gg_{\mu_i},\quad 0\le i\le r;\qquad
z'_i\in\gg^*_{\mu_{i+1}},\quad 0\le i< r,
$$ 
be the corresponding elements described in Definitions \ref{ratfsGr} and \ref{xyz}. 

Also, let $R'_r$ be the corresponding residual polynomial operator.

There is a natural family of group homomorphisms:
$$
\tau_i\colon \g_{i-1}\cap \g'_{i-1}\lra  \ka^*_i,\qquad \alpha\longmapsto (p'_i)^\alpha/p_i^\alpha,\qquad 0\le i\le r.
$$
In fact, this quotient $(p'_i)^\alpha/p_i^\alpha$ of two units belongs to
$\gg_{\mu_i}^*\cap\Delta_{i}=\Delta_{i}^*= \ka^*_i$.

\begin{lemma}\label{changeGfg}
$$
y_i=\tau_i(h_i)y'_i,\quad 0\le i\le r; \qquad z_i=\tau_i(h_i)z'_i,\quad0\le i< r.
$$
\end{lemma}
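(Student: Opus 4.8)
The plan is to compute $y_i$ and $z_i$ directly from their definitions, tracking how the change of finitely-generated subgroup and basis affects each ingredient. Recall $y_i = H_{\mu_i}(Y_i)$ with $Y_i = \phi_i^{e_i}\pi_i^{-h_i}$, and similarly $y'_i = H_{\mu_i}(Y'_i)$ with $Y'_i = \phi_i^{e_i}(\pi'_i)^{-h_i}$. The only difference between $Y_i$ and $Y'_i$ is the factor $\pi_i^{-h_i}$ versus $(\pi'_i)^{-h_i}$; since $\phi_i^{e_i}$ is unchanged, we get $Y_i/Y'_i = (\pi'_i)^{h_i}/\pi_i^{h_i}$ as rational functions. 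Now $h_i \in \g_{i-1} \cap \g'_{i-1}$ (it lies in $\g_{-1} \cap \g'_{-1}$ by construction, since $h_i = e_i\ga_i$ with $\ga_i$ determined intrinsically and both subgroups are required to contain the relevant $\al_i$), so both $\pi_i^{h_i}$ and $(\pi'_i)^{h_i}$ are defined, with $\mu_{i-1}$-value $h_i$, hence $\mu_i$-value $h_i$ by stability (\ref{stability}).

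**First I would** apply $H_{\mu_i}$ to the identity $Y_i = \bigl((\pi'_i)^{h_i}/\pi_i^{h_i}\bigr) Y'_i$. Since $\mu_i(Y_i) = \mu_i(Y'_i) = 0$ and the quotient $(\pi'_i)^{h_i}/\pi_i^{h_i}$ also has $\mu_i$-value $0$, multiplicativity of $H_{\mu_i}$ gives $y_i = H_{\mu_i}\bigl((\pi'_i)^{h_i}/\pi_i^{h_i}\bigr)\, y'_i = \bigl((p'_i)^{h_i}/p_i^{h_i}\bigr)\, y'_i = \tau_i(h_i)\, y'_i$, using the definition of $\tau_i$ and of the group homomorphisms $p_i$, $p'_i$. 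This is the first assertion. For the second, recall from Definition \ref{xyz} that $z_i$ is the image of $y_i$ under the canonical homomorphism $\gg_{\mu_i} \to \gg_{\mu_{i+1}}$, and likewise $z'_i$ is the image of $y'_i$. The quantity $\tau_i(h_i) \in \ka_i^* = \Delta_i^*$ is the image of $y_i/y'_i$ (equivalently of $H_{\mu_i}$ of a degree-zero unit in $K(x)$), so applying the homomorphism $\gg_{\mu_i}\to\gg_{\mu_{i+1}}$ to $y_i = \tau_i(h_i) y'_i$ yields $z_i = \tau_i(h_i) z'_i$ — here I use that $\tau_i(h_i)$, being a unit of degree zero already in $\Delta_i$, maps into $\Delta_{i+1}$ and its image is again called $\tau_i(h_i)$ under the identification $\ka_i \subset \ka_{i+1}$; more precisely one should note that the image of $\tau_i(h_i)$ under $\Delta_i \to \Delta_{i+1}$ is $(p'_i)^{h_i}/p_i^{h_i}$ computed now in $\gg_{\mu_{i+1}}$, which is exactly the value that $\tau_i(h_i)$ denotes after the identifications.

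**The main obstacle** I anticipate is purely bookkeeping: making sure that $h_i$ genuinely lies in $\g_{i-1} \cap \g'_{i-1}$ so that $\pi_i^{h_i}$ and $(\pi'_i)^{h_i}$ are both well-defined, and being careful about which graded algebra each symbol lives in when passing from $\gg_{\mu_i}$ to $\gg_{\mu_{i+1}}$. The first point is handled by the remark after Definition \ref{defgf} that $h_i = e_i\ga_i \in \g_{i-1}$ by construction, together with the observation that the construction of $\g'_{-1}$ in the statement of Lemma \ref{changeGfg} imposes the same defining conditions, so $h_i \in \g'_{i-1}$ as well. The second point is handled by the compatibility already recorded: $x_{i}, y_i, p_i^\alpha$ have well-defined images in the later graded algebras because $\phi_{i+1}\nmid_{\mu_i}\phi_\ell$ for $\ell \le i$ (Proposition \ref{extension} and Lemma \ref{stable}), and the canonical homomorphism $\gg_{\mu_i}\to\gg_{\mu_{i+1}}$ is a ring map, so it respects the factorization $y_i = \tau_i(h_i) y'_i$. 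No genuinely hard step is involved; the proof is a two-line computation once the definitions are unwound, so I would keep it short.
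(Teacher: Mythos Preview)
Your proof is correct and follows essentially the same route as the paper: compute $y_i/y'_i = (p'_i)^{h_i}/p_i^{h_i} = \tau_i(h_i)$ directly from the definitions $y_i = x_i^{e_i}p_i^{-h_i}$ and $y'_i = x_i^{e_i}(p'_i)^{-h_i}$, then push forward along $\gg_{\mu_i}\to\gg_{\mu_{i+1}}$ to obtain the statement for $z_i$. One small slip: your parenthetical claims $h_i \in \g_{-1}\cap\g'_{-1}$, but in general $h_i$ lies only in $\g_{i-1}\cap\g'_{i-1}$ (as you correctly state both before the parenthetical and in your ``main obstacle'' paragraph), and this does not affect the argument.
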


\begin{proof}
By construction, $h_i\in\g_{i-1}\cap \g'_{i-1}$.
Then, from $y_i'=x_i^{e_i}(p'_i)^{-h_i}$, $y_i=x_i^{e_i}\,p_i^{-h_i}$, we deduce $y_i=\tau_i(h_i)y'_i$. 
The other identity follows from this one by applying the homomorphism $\gg_{\mu_i}\to \gg_{\mu_{i+1}}$.
\end{proof}

\begin{theorem}\label{RchangeGfg}
Suppose that $f\in\kx$ has $\mu(f)\in\gmf\cup \left(\g'_\mu\right)^{\operatorname{fg}}$. Then,
$$
R_r(f)(y)=\xi\,R'_r(f)(\zeta y),\qquad \mbox{where } \quad \xi=\tau_r\left(u_r(f)\right), \quad\zeta=\tau_r\left(-h_r\right)\in \ka_r^*.
$$
\end{theorem}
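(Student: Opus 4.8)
The plan is to compare the two instances --- one for each choice of finitely generated subgroup --- of the factorization of $\hm(f)$ furnished by Theorem~\ref{Hmug}, and then to turn the resulting relation between the values of two polynomials at a transcendental element into a genuine polynomial identity. First I would note that the Newton polygon $N_r(f)=N_{\mu_{r-1},\phi_r}(f)$, and hence its $\ga_r$-component $S_{\ga_r}(f)$, is built solely from the $\phi_r$-expansion of $f$ and the valuation $\mu_{r-1}$, so it does not depend on the auxiliary group $\g_{-1}$; consequently $s_r(f)=s'_r(f)=:s$, $u_r(f)=u'_r(f)=:u$, and the left endpoint $Q_s$ lies on $S_{\ga_r}(f)$. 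Since the hypothesis forces $f$ to lie in $\kxfg$ as well as in the domain of $R'_r$, Lemma~\ref{agisatt}(3) applied to $Q_s$ shows $u\in\g_{r-1}\cap\g'_{r-1}$, while $h_r=e_r\ga_r\in\g_{r-1}\cap\g'_{r-1}$ by Definition~\ref{defgf}; thus $\tau_r(u)$ and $\tau_r(-h_r)=\tau_r(h_r)^{-1}$ are well-defined elements of $\ka_r^*$. Also $x_r=H_\mu(\phi_r)$ is the same homogeneous element in both settings.

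Next I would write down Theorem~\ref{Hmug} in each of the two settings: $\hm(f)=x_r^{s}\,p_r^{u}\,R_r(f)(y_r)=x_r^{s}\,(p'_r)^{u}\,R'_r(f)(y'_r)$, an equality in the integral domain $\ggm$. Cancelling the power $x_r^{s}$ of the prime $x_r$ and the unit $p_r^{u}$, and inserting $(p'_r)^{u}=\tau_r(u)\,p_r^{u}$ (this is the definition of $\tau_r$), I obtain $R_r(f)(y_r)=\tau_r(u)\,R'_r(f)(y'_r)$; and since $y_r=\tau_r(h_r)\,y'_r$ by Lemma~\ref{changeGfg}, this becomes $R_r(f)\bigl(\tau_r(h_r)\,y'_r\bigr)=\tau_r(u)\,R'_r(f)(y'_r)$ in $\Delta$.

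Finally, by Theorem~\ref{kstructure} --- applied with the unit $\epsilon=(p'_r)^{-h_r}$ of degree $-h_r$ --- the element $y'_r$ is transcendental over $\ka_r=\kappa$ and $\Delta=\ka_r[y'_r]$, so the evaluation homomorphism $\ka_r[T]\to\Delta$, $T\mapsto y'_r$, is injective; therefore the last displayed equality lifts to the polynomial identity $R_r(f)(\tau_r(h_r)\,T)=\tau_r(u)\,R'_r(f)(T)$ in $\ka_r[T]$. Substituting $T=\tau_r(-h_r)\,y$ and using $\tau_r(h_r)\tau_r(-h_r)=\tau_r(0)=1$ yields $R_r(f)(y)=\tau_r(u)\,R'_r(f)\bigl(\tau_r(-h_r)\,y\bigr)$, which is exactly the claim with $\xi=\tau_r(u_r(f))$ and $\zeta=\tau_r(-h_r)$. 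The case $f=0$ is trivial, and the argument is uniform in $r\ge0$ (with $\mu_{-1}=v$, $\ka_0=k$ when $r=0$).

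The part I expect to require the most care is the first paragraph: checking precisely that $s_r(f),u_r(f)$ are independent of the choice of $\g_{-1}$, that $f$ lies in the domains of \emph{both} $R_r$ and $R'_r$ so that both sides of the identity make sense, and that the two exponents that actually occur ($u_r(f)$ and $h_r$) lie in the common subgroup $\g_{r-1}\cap\g'_{r-1}$ on which $\tau_r$ is defined. Everything afterwards is a single cancellation in the graded domain $\ggm$ together with the elementary transcendence principle, so no genuine obstacle remains.
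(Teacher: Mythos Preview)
Your proof is correct and follows essentially the same path as the paper's: equate the two factorizations of $\hm(f)$ from Theorem~\ref{Hmug}, use Lemma~\ref{changeGfg} to relate $y_r$ and $y'_r$ (and $(p'_r)^u$ to $p_r^u$), then invoke the transcendence from Theorem~\ref{kstructure} to lift the relation in $\Delta$ to a polynomial identity. One minor caution: your symbol $s'_r(f)$ for ``$s_r(f)$ in the primed setting'' collides with the paper's notation $s'_r(f)$, which already denotes the abscissa of the \emph{right} endpoint of $S_{\ga_r}(f)$.
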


\begin{proof}
Let us denote for simplicity $s=s_r(f)$, $u=u_r(f)$. By Theorem \ref{Hmug},
$$
x_r^s\,p_r^u\,R_r(f)(y_r)=\hm(f)=x_r^s\,(p'_r)^u\,R'_r(f)(y'_r).
$$
By Lemma \ref{changeGfg}, this is equivalent to
$$
R_r(f)(y_r)=\tau_r(u)\,R'_r(f)\left(\tau_r(-h_r)y_r\right).
$$
By Theorem \ref{kstructure}, $R_r(f)(y)=\tau_r(u)\,R'_r(f)\left(\tau_r(-h_r)y\right)$.
\end{proof}


\subsection[Dependence of \mbox{$R_r$} on the MacLane chain]{Dependence of $R_r$ on the choice of an optimal MacLane chain}\label{secDepchain}
In this section, we discuss the variation of the elements $x_r,\,y_r,\,z_{r-1},\,p_{r+1}^\alpha\in\gg$ and the operator $R_r$, when we consider different optimal MacLane chains of $\mu$. 

By Proposition \ref{unicity}, two optimal MacLane chains of $\mu$ have the same length $r$, the same intermediate valuations $\mu_0,\dots,\mu_r$, and the same values $\ga_0,\dots,\ga_r\in\gq$. They may differ only in the choice of the key polynomials,
which must be related as follows:
$$
\phi^*_i=\phi_i+a_i,\qquad \deg(a_i)<m_i,\qquad \mu_i(a_i)\ge\ga_i,\quad 0\le i\le r.
$$
In particular, both chains support the same invariants 
$m_i,\,e_i\in\Z_{>0}$, $h_i\in\g_{i-1}$.

\begin{lemma}\label{ei=1}
If $\phi^*_r\not\smu\phi_r$, then $e_r=1$.
\end{lemma}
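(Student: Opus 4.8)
We have two optimal MacLane chains of $\mu$, differing only by key polynomials $\phi^*_i = \phi_i + a_i$ with $\deg(a_i) < m_i$ and $\mu_i(a_i) \geq \gamma_i$. Both chains share the invariants $m_i, e_i, h_i$. We are told $\phi^*_r \not\smu \phi_r$ and must show $e_r = 1$.

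---

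**Proof proposal:**

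\begin{proof}
Write $\phi^*_r = \phi_r + a_r$ with $\deg(a_r) < m_r$ and $\mu_{r-1}(a_r) = \mu_r(a_r) \ge \ga_r$ (the equality of the two values holds because $\deg(a_r) < m_r = \deg(\phi_r)$, so $\phi_r \nmid_{\mu_{r-1}} a_r$ and Lemma \ref{stable} applies). Since $\phi_r$ and $\phi^*_r$ are both key polynomials for $\mu_{r-1}$ of the same minimal degree $m_r$, and the $\phi_r$-expansion of $\phi^*_r$ is simply $a_r + \phi_r$, the left endpoint of $S_{\ga_r}(\phi^*_r)$ has abscissa $0$ and ordinate $\mu_{r-1}(a_r)$, while its right endpoint has abscissa $\ell(N_r(\phi^*_r)) = 1$. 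The hypothesis $\phi^*_r \not\smu_\mu \phi_r$ is equivalent, by Corollary \ref{equivR}, to the statement that $\phi^*_r$ and $\phi_r$ do \emph{not} have the same residual data; since $R_r(\phi_r) = 1$ (Lemma \ref{monicR}) with $s_r(\phi_r) = 0$, $u_r(\phi_r) = 0$, the failure of $\mu$-equivalence forces $\mu_{r-1}(a_r) = \ga_r$ and $Q_0 = (0, \ga_r)$ to lie on $N_r(\phi^*_r)$; that is, both vertices of $S_{\ga_r}(\phi^*_r)$ lie on the cloud $\cc$, so $S_{\ga_r}(\phi^*_r)$ is a genuine side of $N_r(\phi^*_r)$ joining the abscissas $0$ and $1$.
\e

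Now apply Lemma \ref{j} (equivalently Lemma \ref{basicR}): the difference of abscissas of the endpoints of $S_{\ga_r}(\phi^*_r)$ is divisible by $e_r$. That difference is $s'_r(\phi^*_r) - s_r(\phi^*_r) = 1 - 0 = 1$. Hence $e_r \mid 1$, which gives $e_r = 1$.
\end{proof}

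---

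**The main obstacle** I anticipate is the first paragraph: pinning down precisely why $\phi^*_r \not\smu \phi_r$ forces the point $(0,\ga_r)$ onto the Newton polygon $N_r(\phi^*_r)$ and forces $\mu_{r-1}(a_r)$ to equal $\ga_r$ rather than exceed it. If $\mu_{r-1}(a_r) > \ga_r$ we would have $a_r \smu_{\mu_{r-1}} 0$ relative to the relevant line, the point $Q_0$ would lie strictly above the $\ga_r$-line through $Q_1 = (1, \ga_r + \ga_r - \ga_r)$... wait, I need to recompute: $Q_1$ corresponds to the coefficient $1$ of $\phi_r^1$, so $Q_1 = (1, 0)$ and the value along the $\ga_r$-line at abscissa $1$ is $0 + 1\cdot\ga_r = \ga_r$. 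So $S_{\ga_r}(\phi^*_r)$ has right endpoint $(1,0)$ and the question is whether $(0, \mu_{r-1}(a_r))$ also sits on that line, i.e. whether $\mu_{r-1}(a_r) = \ga_r$. If $\mu_{r-1}(a_r) > \ga_r$, then $S_{\ga_r}(\phi^*_r) = \{(1,0)\}$ is a single point, $s_r(\phi^*_r) = s'_r(\phi^*_r) = 1$, and $\phi^*_r \smu_\mu \phi_r$ after all (both have $\hm = \hm(\phi_r)$). So the contrapositive is clean: $\phi^*_r \not\smu \phi_r \iff \mu_{r-1}(a_r) = \ga_r$, and in that case $S_{\ga_r}(\phi^*_r)$ is the side from $(0,\ga_r)$ to $(1,0)$ of horizontal length $1$, whence $e_r \mid 1$.
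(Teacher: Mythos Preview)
Your argument is correct in substance, but it takes a detour that the paper avoids. A small slip first: $s_r(\phi_r)=1$, not $0$, since the $\phi_r$-expansion of $\phi_r$ has the single monomial $\phi_r^1$. This does not damage your reasoning, because your ``obstacle'' paragraph sorts out the logic cleanly: $\phi^*_r\not\smu\phi_r$ if and only if $\mu(a_r)=\mu_{r-1}(a_r)=\ga_r$, established by the contrapositive you wrote out.

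The genuine difference from the paper is what you do next. You pass through the Newton polygon $N_r(\phi^*_r)$, observe that $S_{\ga_r}(\phi^*_r)$ runs from $(0,\ga_r)$ to $(1,0)$, and then invoke Lemma~\ref{j} or Lemma~\ref{basicR} to conclude $e_r\mid 1$. The paper's proof is a one-liner from the same starting point: once $\ga_r=\mu_{r-1}(a_r)$, you have $\ga_r\in\g_{\mu_{r-1}}$ directly (because $\mu_{r-1}$ takes values in $\g_{\mu_{r-1}}$), and since $\g_{\mu_r}=\gen{\g_{\mu_{r-1}},\ga_r}$, it follows that $\g_{\mu_r}=\g_{\mu_{r-1}}$ and $e_r=\left(\g_{\mu_r}:\g_{\mu_{r-1}}\right)=1$. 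Your appeal to Lemma~\ref{j} is really just this index computation repackaged: the hypothesis of Lemma~\ref{j} that $u=\ga_r$ lie in $\g_{\mu_{r-1}}$ is already the whole point. So the Newton-polygon scaffolding is correct but unnecessary here; the paper's argument extracts the conclusion immediately from the value-group definition of $e_r$.
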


\begin{proof}
The condition  $\phi^*_r\not\smu\phi_r$ is equivalent to $\mu(a_r)=\mu(\phi_r)=\ga_r$. Since $\mu(a_r)=\mu_{r-1}(a_r)\in\g_{\mu_{r-1}}$, this leads to $\ga_r\in \g_{\mu_{r-1}}$, which implies $e_r=1$.
\end{proof}


We mark with a superscript $(\ )^*$ all data and operators attached to the MacLane chain determined by the choice of the key polynomials $\phi_0^*,\dots,\phi_r^*$.

\begin{theorem}\label{lastlevel}
With the above notation.
\begin{enumerate}
\item[(1)] The group homomorphisms $p^*_{i}$ and $p_{i}$ coincide for all \ $0\le i\le r+1$.
\item[(2)] If $\phi^*_r\smu\phi_r$, then \ $x^*_r=x_r,\quad y^*_r=y_r,\quad R^*_r=R_r$.
\item[(3)] If $\phi^*_r\not\smu\phi_r$, then \ $x^*_r=x_r+p_r^{h_r}\eta,\quad y^*_r=y_r+\eta$, where \,$\eta=R_r(a_r)\in \ka_r^*$.

Moreover, for a non-zero $g\in \kxfg$ we have
\begin{equation}\label{RR*}
y^{s_r(g)}R_r(g)(y)=(y+\eta)^{s^*_r(g)}R^*_r(g)(y+\eta).
\end{equation}
\end{enumerate}
\end{theorem}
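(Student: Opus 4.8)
The plan is to first observe, via Proposition~\ref{unicity}, that the two optimal MacLane chains share the same intermediate valuations $\mu_i=\mu^*_i$, hence the same graded algebras $\gg_{\mu_i}$, the same connecting homomorphisms $\gg_{\mu_i}\to\gg_{\mu_{i+1}}$, the same residue fields $\ka_i$, the same invariants $m_i,e_i,h_i$, and the same intrinsic data $\g_{-1},\ga_0,\dots,\ga_r$; the only freedom is the replacement $\phi_i\mapsto\phi^*_i=\phi_i+a_i$, so the whole theorem is about the effect of this substitution on $x_r$, $y_r$, the homomorphisms $p_i$, and the operator $R_r$. For part~(1) I would induct on $i$ using the recursive Definition~\ref{ratfs} of $\pi_{i+1,j}$. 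The base case $i=0$ is immediate, since $\pi_{0,j}\in K$ and $\mu_0$ are chain-independent. For the inductive step, (\ref{phis}) shows that $\pi_{i+1,j}$ equals $\phi_i^{\ell_{i,j}e'_{i,j}}$ (a \emph{nonnegative} power) times a monomial with integer exponents in $\pi_{i,1},\dots,\pi_{i,k}$; applying $H_{\mu_{i+1}}$ and invoking the inductive hypothesis on the images of the $p_{i,\ell}$, the claim reduces to showing $H_{\mu_{i+1}}(\phi^*_i)=H_{\mu_{i+1}}(\phi_i)$ whenever $\phi_i$ actually occurs. Here the dichotomy is: if $\mu_i(a_i)>\ga_i=\mu_i(\phi_i)$ then $H_{\mu_i}(\phi^*_i)=H_{\mu_i}(\phi_i)$, which persists under $\gg_{\mu_i}\to\gg_{\mu_{i+1}}$ by stability (Lemma~\ref{stable}); and if $\mu_i(a_i)=\ga_i$ then $\ga_i=\mu_{i-1}(a_i)\in\g_{\mu_{i-1}}$, so by the argument of Lemma~\ref{ei=1} (applied at level $i$) we get $e_i=1$, whence every $e_{i,j}=1$, every $\ell_{i,j}=0$, every $\ell'_{i,j}=1$, and $\pi_{i+1,j}=\pi_{i,j}$, so $\phi_i$ does not occur at all. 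Either way $p^*_{i+1}=p_{i+1}$, and the induction runs up to $i=r+1$.

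Part~(2) is then short. If $\phi^*_r\smu\phi_r$, then $x^*_r=H_\mu(\phi^*_r)=H_\mu(\phi_r)=x_r$, and since $y_r=x_r^{e_r}p_r^{-h_r}$ with $p^*_r=p_r$ by~(1), also $y^*_r=y_r$. Writing Theorem~\ref{Hmug} along both chains gives, for every $f\in\kxfg$, the equality $x_r^{s_r(f)}p_r^{u_r(f)}R_r(f)(y_r)=x_r^{s^*_r(f)}p_r^{u^*_r(f)}R^*_r(f)(y_r)$; since $R_r(f)(0),R^*_r(f)(0)\in\ka_r^*$ by Lemma~\ref{basicR}, Lemma~\ref{Runicity} forces $s^*_r(f)=s_r(f)$, $u^*_r(f)=u_r(f)$ and $R^*_r(f)=R_r(f)$.

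For part~(3), Lemma~\ref{ei=1} gives $e_r=1$, so $h_r=\ga_r\in\g_{r-1}$ (hence $\g_r=\g_{r-1}$) and $Y^*_r=\phi^*_r(\pi^*_r)^{-h_r}$. As $\deg a_r<m_r$, the Newton polygon $N_r(a_r)$ is the single point $(0,\ga_r)$, so $R_r(a_r)$ is a constant $\eta\in\ka_r^*$, and by Theorem~\ref{Hmug} one gets $H_\mu(a_r)=p_r^{h_r}\eta$; since $H_\mu(\phi_r)=x_r$ is prime and $H_\mu(a_r)$ a unit their sum is nonzero, so by (\ref{Hmu}) $x^*_r=H_\mu(\phi_r+a_r)=x_r+p_r^{h_r}\eta$ and therefore $y^*_r=x^*_rp_r^{-h_r}=y_r+\eta$. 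Finally, for a nonzero $g\in\kxfg$ I would write Theorem~\ref{Hmug} for $g$ along both chains, substitute $x_r=y_rp_r^{h_r}$ and $x^*_r=(y_r+\eta)p_r^{h_r}$, and use $s_r(g)\ga_r+u_r(g)=\mu(g)=s^*_r(g)\ga_r+u^*_r(g)$ (Remark~\ref{mug}) to cancel the common factor $p_r^{\mu(g)}$; this produces the identity $y_r^{s_r(g)}R_r(g)(y_r)=(y_r+\eta)^{s^*_r(g)}R^*_r(g)(y_r+\eta)$ in $\Delta$, which lifts to the claimed identity in $\ka_r[y]$ because $y_r$ is transcendental over $\ka_r$ by Theorem~\ref{kstructure}.

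I expect the main obstacle to be part~(1): one must carefully route the recursive formula for $\pi_{i+1,j}$ through the homomorphisms $\gg_{\mu_i}\to\gg_{\mu_{i+1}}$ and make the case split on $\mu_i(a_i)$ versus $\ga_i$ completely airtight — in particular the point that $\mu_i(a_i)=\ga_i$ collapses $e_i$ to $1$ and thereby removes $\phi_i$ from $\pi_{i+1,j}$. Once~(1) is in hand, parts~(2) and~(3) are bookkeeping, the only genuine subtlety in~(3) being that $e_r=1$ turns $h_r$ into an attainable value and makes $Y^*_r$ linear in $\phi^*_r$.
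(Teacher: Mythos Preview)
Your proposal is correct and follows essentially the same route as the paper's own proof: the same induction for part~(1) with the same dichotomy (either $\phi^*_i\sim_{\mu_i}\phi_i$ so the $H$-images agree, or $e_i=1$ forces $\ell_{i,j}=0$ and $\phi_i$ drops out of $\pi_{i+1,j}$), and the same applications of Theorem~\ref{Hmug} together with Lemma~\ref{Runicity} (for~(2)) and the substitution $x_r=y_rp_r^{h_r}$ plus Theorem~\ref{kstructure} (for~(3)). The only cosmetic difference is that the paper phrases~(1) as ``induction on $r$'' (invoking the theorem on the truncated chains to get $p^*_i=p_i$ for $i\le r$, then proving the step $r\to r+1$), whereas you run the induction directly on the level $i$; the content is identical.
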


\begin{proof}
Let us first prove by induction on $r$ all statements concerning $p^*_{i}$, $x^*_r$ and $y^*_r$.

By Definition \ref{ratfsGr}, $p^*_{0}=p_{0}$, because the choice of $\pi_{0,j}\in K^*$ is independent of the MacLane chain. Suppose $p^*_{i}=p_{i}$ for all $0\le i\le r$.

If $\phi^*_r\smu\phi_r$, then $x_r^*=\hm(\phi_r^*)=\hm(\phi_r)=x_r$, leading to
 $y^*_r=y_r$ and $p^*_{r+1,j}=p_{r+1,j}$ for all $j$, by Definitions \ref{ratfs} and \ref{ratfsGr}. Hence, $p^*_{r+1}=p_{r+1}$.

If $\phi^*_r\not\smu\phi_r$, then $e_r=1$ by Lemma \ref{ei=1}. By (\ref{bezout}), $\ell_{r,j}=0$, $\ell'_{r,j}=1$ for all $j$. Thus,
$$
\pi^*_{r+1,j}=\pi^*_{r,j},\qquad \pi_{r+1,j}=\pi_{r,j},\quad 1\le j\le k,
$$
by Definition \ref{ratfs}.
Hence, $p^*_{r+1}=p^*_{r}=p_{r}=p_{r+1}$.

Also, since $\deg(a_r)<m_r$, we get 
$$
s_r(a_r)=0,\qquad u_r(a_r)=\mu_{r-1}(a_r)=\mu(a_r)=\mu(\phi_r)=\ga_r=h_r.
$$
Hence, $a_r$ has an atainable $\mu$-value, and Theorem \ref{Hmug} shows that 
$$
\hm(a_r)=p_r^{h_r}R_r(a_r)=p_r^{h_r}\eta.
$$
By using equation (\ref{Hmu}), we get in this case 
$$
x_r^*=\hm(\phi_r^*)=\hm(\phi_r)+\hm(a_r)=x_r+p_r^{h_r}\eta,
$$
leading to $y^*_r=x^*_r(p^*_r)^{-h_r}=x^*_r(p_r)^{-h_r}=x_rp_r^{-h_r}+\eta=y_r+\eta$.\e

Finally, let us prove the statements concerning $R_r$.

Let $g\in\kx$ be a non-zero polynomial with attainable $\mu$-value. \e

\noindent{\bf Case }$\phi^*_r\smu\phi_r$. 
Since $x^*_r=x_r$, $y^*_r=y_r$, and $p^*_r=p_r$, Theorem \ref{Hmug} shows that
$$
x_r^{s_r(g)}\,p_r^{u_r(g)}R_r(g)(y_r)=\hm(g)=x_r^{s^*_r(g)}\,p_r^{u^*_r(g)}R^*_r(g)(y_r).
$$
By Lemma \ref{Runicity}, $s^*_r(g)=s_r(g)$, $u^*_r(g)=u_r(g)$, and $R^*_r(g)=R_r(g)$.\e



\noindent{\bf Case }$\phi^*_r\not\smu\phi_r$. Recall that $e_r=1$ and $p^*_r=p_r$. By Theorem \ref{Hmug},
$$
x_r^{s_r(g)}\,p_r^{u_r(g)}R_r(g)(y_r)=\hm(g)=(x^*_r)^{s^*_r(g)}\,p_r^{u^*_r(g)}R^*_r(g)(y^*_r).
$$
Since $x_r=p_r^{h_r}y_r$ and $x^*_r=p_r^{h_r}y^*_r=p_r^{h_r}(y_r+\eta)$, we deduce 
$$
y_r^{s_r(g)}p_r^{u_r(g)+s_r(g)h_r}R_r(g)(y_r)=(y_r+\eta)^{s^*_r(g)}p_r^{u^*_r(g)+s^*_r(g)h_r}R^*_r(g)(y_r+\eta).
$$
Since $u_r(g)+s_r(g)h_r=\mu(g)=u^*_r(g)+s^*_r(g)h_r$, we may drop the powers of $p_r$:
$$
y_r^{s_r(g)}R_r(g)(y_r)=(y_r+\eta)^{s^*_r(g)}R^*_r(g)(y_r+\eta).
$$
This proves (\ref{RR*}), as a consequence of Theorem \ref{kstructure}.
\end{proof}



\end{document}